\renewcommand\subsubsection{\@startsection{subsubsection}{3}{\z@}%
                                     {-2ex\@plus -1ex \@minus -.2ex}%
                                     {-0.5em}
                                     {\normalfont\normalsize\bfseries}}
                                     \renewcommand\subsection{\@startsection{subsection}{3}{\z@}%
                                     {-3.25ex\@plus -1ex \@minus -.2ex}%
                                     {-0.5em}
                                     {\normalfont\normalsize\bfseries}}
\theoremstyle{plain}
\newtheorem{Thm}{Theorem}
\newtheorem*{Thm*}{Theorem}
\newtheorem{ThmA}{Theorem}
\newtheorem{Prop}[Thm]{Proposition}
\newtheorem{Lem}[Thm]{Lemma}
\newtheorem*{Con*}{Conjecture}
\theoremstyle{definition}
\newtheorem{Rem}[Thm]{Remark}
\newtheorem{Defn}[Thm]{Definition}
\newtheoremstyle{named}{}{}{}{}{\bfseries}{.}{.5em}{\thmnote{#3}#1}
\theoremstyle{named}
\DeclareMathOperator{\gl}{\mathfrak{gl}}
\DeclareMathOperator{\GL}{GL}
\DeclareMathOperator{\Hom}{Hom}
\DeclareMathOperator{\Id}{Id}
\DeclareMathOperator{\lisom}{\!\!\smash{\begin{array}{c}\sim\\[-1em]
\longrightarrow\end{array}}\!\!}
\DeclareMathOperator{\rk}{rk}
\DeclareMathOperator{\Rep}{Rep}
\DeclareMathOperator{\ds}{\!/\mkern-2mu/\mkern-2mu}
\newcommand*{\rom}[1]{\expandafter\@slowromancap\romannumeral #1@}
\renewcommand{\descriptionlabel}[1]{\hspace\labelsep\upshape\bfseries #1.}
\let\orgdescriptionlabel\descriptionlabel
\renewcommand*{\descriptionlabel}[1]{%
  \let\orglabel\label
  \let\label\@gobble
  \phantomsection
  \edef\@currentlabel{#1}%
  \let\label\orglabel
  \orgdescriptionlabel{#1}%
}
\title{The tame Deligne-Simpson problem}
\author{Cheng Shu}
\address[C. Shu]{Institute for Theoretical Sciences, Westlake University, Hangzhou, China}
\email[C. Shu]{shuc@zju.edu.cn}
\colorlet{ivory}{Apricot!30!}
\colorlet{space}{black!85!}
\definecolor{bgc}{RGB}{29, 44, 46}
\definecolor{txt}{RGB}{223, 222, 189}
\definecolor{cmd}{RGB}{206, 151, 88}
\begin{document}
\let\bs\boldsymbol
\begin{abstract}
The objective of this article is to prove the necessity statement in Crawley-Boevey's conjectural solution to the (tame) Deligne-Simpson problem. We use the nonabelian Hodge correspondence, variation of parabolic weights and results of Schedler-Tirelli to reduce to simpler situations, where every conjugacy class is semi-simple and the underlying quiver is (1) an affine Dynkin diagram or (2) an affine Dynkin diagram with an extra vertex. In case (1), a nonexistence result of Kostov applies. In case (2), the key step is to show that simple representations, if exist, lie in the same connected component as direct sums of lower dimensional ones.
\end{abstract}
\maketitle

\tableofcontents
\addtocontents{toc}{\protect\setcounter{tocdepth}{-1}}
\setcounter{tocdepth}{1}
\numberwithin{Thm}{section}
\numberwithin{equation}{section}
\addtocontents{toc}{\protect\setcounter{tocdepth}{1}}
\section{Introduction}\label{INT}

For a given tuple of conjugacy classes $(C_j)_{1\le j\le k}$ in $\GL(\mathbb{C}^n)$, does there exist matrices $A_j\in C_j$ such that the following conditions hold?
\begin{itemize}
\item $A_1\cdots A_k=\Id$, and 
\item there is no nontrivial proper vector subspace of $\mathbb{C}^n$ that is preserved by every $A_j$; in this case, the tuple $(A_j)_j$ is called \textit{irreducible}.
\end{itemize}
This question was posed by Deligne, and the first attempt was made by Simpson \cite{Si91}; assuming one of the conjugacy classes to be generic regular semi-simple, Simpson obtained a necessary and sufficient condition on the tuple $(C_j)_j$ for this question to have an affirmative answer. Some earlier attempts were also made by Kostov, and the problem was since known as the Deligne-Simpson problem; see \cite{Kos99}, \cite{Kos01} as well as a survey \cite{Kos04}.

Despite its linear-algebra look, the problem is most natural in its geometric form, and the underlying geometric objects are known as  character varieties. For any genus $g$ and any tuple of closures of conjugacy classes $\overline{\mathcal{C}}=(\bar{C}_j)_{1\le j\le k}$ of $\GL_n$, the associated character variety is the affine GIT quotient
\begin{equation}
\mathcal{M}_g(\overline{\mathcal{C}}):=\{((A_i,B_i)_{1\le i\le g},(X_j)_{1\le j\le k})\in \GL_n^{2g}\times\prod_{j=1}^k\bar{C}_j\mid\prod_{i=1}^g[A_i,B_i]\prod_{j=1}^kX_j=\Id\}\ds\GL_n,
\end{equation}
where the bracket means commutator $[A_i,B_i]=A_iB_iA_i^{-1}B_i^{-1}$. As is clear from the definition, only the case of $g=0$ is relevant to the Deligne-Simpson problem, and we will omit the subscript $g$ if it is clear from the context. This variety parametrises semi-simple representations of the fundamental group of a punctured Riemann surface, or equivalently, semi-simple local systems with prescribed monodromies. The Deligne-Simpson problem amounts to asking whether there exist irreducible local systems with monodromies at the punctures lying in $(C_j)_{1\le j\le k}$. This geometric aspect makes it possible to employ tools like nonabelian Hodge theory (see \cite{Si94a} and \cite{Si94b}) to attack the problem. Indeed, in Simpson's original article \cite{Si91}, solutions were found by going back and forth along the nonabelian Hodge correspondence.

However, geometry lacks the appropriate language to organise the combinatorial information in the monodromies in a meaningful way so as to formulate a clean answer to the problem. In \cite{CB04}, Crawley-Boevey reformulated the problem in terms of the Kac-Moody root system of certain star-shaped graphs associated the conjugacy classes $(C_j)_j$, and a conjectural necessary and sufficient condition was proposed. This Kac-Moody picture much clarified the problem and indeed allowed him to make significant progress towards the answer. The sufficiency statement in this conjecture was lated confirmed in his joint work with Shaw \cite{CBS}, where multiplicative preprojective algebras were introduced in connection with Katz's middle convolution operation, which proves to be a useful tool for many purposes and is interesting in its own right. The purpose of our article is to prove the necessity statement in Crawley-Boevey's conjecture, thus giving a definite answer to the Deligne-Simpson problem. As we will see, our solution to Crawley-Boevey's conjecture brings in nonabelian Hodge theory again and fully exploits the flexibility that it provides.

In the rest of this introduction, we will recall in more detail Crawley-Boevey's formulation of the Deligne-Simpson problem, multiplicative quiver varieties and their relation with character varieties, followed by the statement of our main theorem.

\subsection{Multiplicative quiver varieties}\label{subset-MQV}\hfill

It is a classical result of Kraft-Procesi \cite{KP} that closures of adjoint orbits in $\mathfrak{gl}_n$ can be identified with quiver varieties of type A. This construction is applied to a tuple of closures of conjugacy classes of $\GL_n$ in the work of Crawley-Boevey-Shaw \cite{CBS}, where they give an identification between character varieties for $\mathbb{P}^1$ and multiplicative quiver varieties for star-shaped quivers. 

A quiver $Q=(Q_0,Q_1)$ consists of a vertex set $Q_0$ and an arrow set $Q_1$. We denote by $h$ and $t$ the two maps from $Q_1$ to $Q_0$ which sends an arrow to its head and tail respectively. Let $Q_1^{\ast}$ be a set of arrows in bijection with $Q_1$, which contains for any arrow $a:v\rightarrow w$ in $Q_1$ between vertices $v$ and $w$ an arrow $a^{\ast}:w\rightarrow v$. Let $\bar{Q}$ be the quiver with vertex set $Q_0$ and arrow set $\bar{Q}_1:=Q_1\sqcup Q_1^{\ast}$. Let $\epsilon:\bar{Q}_1\rightarrow\{\pm1\}$ be the function that takes the positive value precisely on $Q_1$. For any $\mathbf{d}=(d_v)_{v\in Q_0}\in(\mathbb{Z}_{\ge0})^{Q_0}$, called the dimension vector, write
$$
\Rep(\bar{Q},\mathbf{d})=\bigoplus_{a\in\bar{Q}_1}\Hom(\mathbb{C}^{d_{t(a)}},\mathbb{C}^{d_{h(a)}});
$$ 
an element of this vector space will be denoted by $\bs\phi:=(\phi_a)_{a\in\bar{Q}_1}$ and will be called a $\mathbf{d}$-dimensional representations of $\bar{Q}$. A subrepresentation of $\bs\phi$ consists of a subspace $V_v\subset \mathbb{C}^{d_v}$ for every $v$ such that $\phi_a(V_{t(a)})\subset V_{h(a)}$ for every $a$. A representation is simple if there is no nontrivial proper subrepresentation. Denote by $\Rep^{\circ}(\bar{Q},\mathbf{d})$ the open subset consisting of those $\bs\phi$ satisfying $\det(\Id+\phi_a\phi_{a^{\ast}})\neq 0$ for any $a$; such a $\bs\phi$ will be called an invertible representation of $\bar{Q}$. There is an action of $G:=\prod_{v\in Q_0}\GL_{d_v}$ on $\Rep(\bar{Q},\mathbf{d})$ preserving the open subset $\Rep^{\circ}(\bar{Q},\mathbf{d})$; the action sends $(g_v)_v\in G$ and $\bs\phi$ to $(g_{h(a)}\phi_ag_{t(a)}^{-1})_{a\in\bar{Q}_1}$. Choose a total ordering $<$ on $\bar{Q}$. Define
\begingroup
\allowdisplaybreaks
\begin{align}\label{eq-quasi-Ham}
\mu:\Rep^{\circ}(\bar{Q},\mathbf{d})&\longrightarrow G\\
\nonumber
\bs\phi&\longmapsto (\prod_{\substack{a\in\bar{Q}\\h(a)=v}}(1+\phi_a\phi_{a^{\ast}})^{\epsilon(a)})_{v\in Q_0}.
\end{align}
\endgroup
For any deformation parameter $\mathbf{q}=(q_v)_{v\in Q_0}\in(\mathbb{C}^{\ast})^{Q_0}$, regarded as a tuple of scalar matrices in $\prod_v\GL_{d_v}$, the associated multiplicative quiver variety is defined as the affine GIT quotient
\begin{equation}\label{eq-defn-multi-q.v.}
\mathcal{M}(\mathbf{q},\mathbf{d}):=\mu^{-1}(\mathbf{q})\ds G.
\end{equation}
A necessary condition for this variety to be nonempty is 
\begin{equation}\label{eq-qd=1}
\mathbf{q}^{\mathbf{d}}:=\prod_{v\in Q_0}q_v^{d_v}=1.
\end{equation}

Let $\bs\theta=(\theta_v)_{v\in Q_0}\in\mathbb{R}^{Q_0}$, which satisfies 
$$
\bs\theta\cdot\mathbf{d}:=\sum_{v\in Q_0}\theta_vd_v=0.
$$
We say that $\bs\phi\in\Rep(\bar{Q},\mathbf{d})$ is a $\bs\theta$-stable (resp. semi-stable) representation if for any nontrivial proper subrepresentation $(V_v)_v$, we have
$$
\sum_{v\in Q_0}\theta_v\dim V_v<0\text{ (resp. $\le0)$)}.
$$
Denote the open subset of $\bs\theta$-semi-stable representations by $\Rep^{\bs\theta-ss}(\bar{Q},\mathbf{d})$. Then, the multiplicative quiver variety with stability condition $\bs\theta$ is defined as
$$
\mathcal{M}_{\bs\theta}(\mathbf{q},\mathbf{d}):=(\Rep^{\bs\theta-ss}(\bar{Q},\mathbf{d})\cap\mu^{-1}(\mathbf{q}))\ds G.
$$

\subsection{Quiver description of character varieties}\label{subsec-Qui-Char}\hfill

Let $n\in\mathbb{Z}_{>0}$ and let $\bar{C}$ be the closure of a conjugacy class $C\subset\GL_n$. We would like to produce a quiver together with parameters $\mathbf{q}$ and $\mathbf{d}$ from $C$. Suppose that $\nu+1$ is the degree of the minimal polynomial of an element $A\in C$, and let $(\xi_i)_{0\le i\le \nu}$ be a tuple of complex numbers such that $\prod_{i=0}^{\nu}(A-\xi_i)=0$. For $1\le i\le \nu$, define
$$
d_i:=\rk (A-\xi_0)(A-\xi_1)\cdots(A-\xi_{i-1}),
$$
and $d_0=n$. Write $\mathbf{d}=(d_i)_i$. Define a quiver $Q$ with $Q_0=\{0,1,2,\ldots,\nu\}$ and arrows $a_i:i\mapsto i+1$. Consider the space of invertible representations $\Rep^{\circ}(\bar{Q},\mathbf{d})$ and the map $\mu$ as in (\ref{eq-quasi-Ham}). Let $\mu_{>0}$ be the composition of $\mu$ and the projection $\prod_{i\in Q_0}\GL_{d_i}\rightarrow\prod_{i\in Q_0\setminus\{0\}}\GL_{d_i}=:G_{>1}$. Define $q_0=\xi_0$ and $q_i=\xi_i\xi_{i-1}^{-1}$ for $1\le i\le \nu$, and regard $\mathbf{q}:=(q_i)_{i\in Q_0}$ as a central element of $\prod_{i\in Q_0}\GL_{d_i}$. Consider the map
\begingroup
\allowdisplaybreaks
\begin{align*}
\mu_{>1}^{-1}(\mathbf{q})\hookrightarrow\Rep^{\circ}(\bar{Q},\mathbf{d})&\longrightarrow \GL_n\\
(\phi_{a_i},\phi_{a_i^{\ast}})_i&\longmapsto q_0(1+\phi_{a_1^{\ast}}\phi_{a_1}),
\end{align*}
\endgroup
Then, by \cite[Lemma 9.1]{CB03} and \cite[Lemma 9.3]{Boa15}, the above map induces an isomorphism $\mu_{>1}^{-1}(\mathbf{q})\ds G_{>1}\cong \bar{C}$.

The above construction can be applied to a tuple of closures of conjugacy classes, resulting in an identification between character varieties for $\mathbb{P}^1$ and multiplicative quiver varieties for star-shaped quivers. Suppose that we have a tuple $\bar{\mathcal{C}}=(\bar{C}_1,\ldots,\bar{C}_k)$ of closures of conjugacy classes of $\GL_n$. These data define for each $1\le j\le k$ a type $A_{\nu_j+1}$ quiver $Q^{(j)}$ with vertices $Q^{(j)}_0=\{[j,i]\}_{0\le i\le \nu_j}$ and arrows $[j,i]\rightarrow[j,i+1]$, as well as a dimension vector $\mathbf{d}^{(j)}$ and a deformation parameter $\mathbf{q}^{(j)}$. Form a star-shaped quiver $Q$ by taking the disjoint union of all $Q^{(j)}$ and identifying the vertices $\{[j,0]\}$ for all $j$; the identified vertices will be denoted by $\star$ in $Q_0$, but we may let some $[j,0]$ represent $\star$. The resulting quiver is star-shaped as drawn below:
\[\begin{tikzcd}
	& {[1,1]} & {[1,2]} & \cdots & {[1,\nu_1]} \\
	& {[2,1]} & {[2,2]} & \cdots & {[2,\nu_2]} \\
	\star & \vdots & \vdots && \vdots \\
	& {[k,1]} & {[k,2]} & \cdots & {[k,\nu_k]}.
	\arrow[from=3-1, to=1-2]
	\arrow[from=1-2, to=1-3]
	\arrow[from=1-3, to=1-4]
	\arrow[from=1-4, to=1-5]
	\arrow[from=3-1, to=2-2]
	\arrow[from=2-2, to=2-3]
	\arrow[from=2-3, to=2-4]
	\arrow[from=2-4, to=2-5]
	\arrow[from=3-1, to=4-2]
	\arrow[from=4-2, to=4-3]
	\arrow[from=4-3, to=4-4]
	\arrow[from=4-4, to=4-5]
\end{tikzcd}\]
Define $\mathbf{d}$ by $\mathbf{d}|_{Q^{(j)}}=\mathbf{d}^{(j)}$ for all $j$, and define $\mathbf{q}$ by $q_{\star}=\prod_{j=1}^kq_0^{(j)}$ and $q_{[j,i]}=q^{(j)}_i$ for $i>0$. We have $\Rep^{\circ}(\bar{Q},\mathbf{d})=\prod_{j=1}^k\Rep^{\circ}(\bar{Q}^{(j)},\mathbf{d}^{(j)})$. Denote by $\mu_{>0}:\Rep^{\circ}(\bar{Q},\mathbf{d})\rightarrow\prod_{\{[j,i]|i>0\}}\GL_{d_{[j,i]}}$ the direct product of the maps $\mu_{>0}$ associated to $Q^{(j)}$ as above. Then, we have an affine GIT quotient by $\prod_{\{[j,i]|i>0\}}\GL_{d_{[j,i]}}$
$$
\mu^{-1}_{>0}(\mathbf{q})\longrightarrow \prod_{j=1}^k\bar{C}_j.
$$ 
Denote by $\mathbf{m}:\prod_{j=1}^k\bar{C}_j\rightarrow\GL_n$ the multiplication map. Then, the above quotient map restricts to closed subvarieties:
\begin{equation}\label{eq-multi=char-1}
\mu^{-1}(\mathbf{q})\longrightarrow\mathbf{m}^{-1}(1).
\end{equation}
Passing to the quotient by $\GL_n$, we obtain an isomorphism:
\begin{equation}\label{eq-multi=char-2}
\mathcal{M}(\mathbf{q},\mathbf{d})\lisom\mathcal{M}(\overline{\mathcal{C}}).
\end{equation}
According to Crawley-Boevey-Shaw (see \cite[Lemma 8.3]{CBS}), there is a simple representation in $\mathcal{M}(\mathbf{q},\mathbf{d})$ if and only if there is an irreducible local system in $\mathcal{M}(\mathcal{C})$. The necessary condition for nonemptiness (\ref{eq-qd=1}) is equivalent to
$$
\prod_{j=1}^k\det A_j=1,\text{ for $A_j\in C_j$, $1\le j\le k$}.
$$

\subsection{Crawley-Boevey's conjecture}\hfill

We are almost ready to state Crawley-Boevey's conjectural solution to the Deligne-Simpson problem. A couple of notations and definitions are in order. Let $Q$ be a star-shaped quiver. For any vertex $v\in Q_0$, we denote by $e_v$ the corresponding coordinate vector in $\mathbb{Z}^{Q_0}$, and we will call $e_v$ a simple root. For any $\mathbf{d}\in\mathbb{Z}_{\ge0}^{Q_0}$, the support of $\mathbf{d}$ is the subquiver obtained by removing vertices $v$ with $d_v= 0$ and edges connecting to such vertices. Denote by $(-,-)$ the symmetric bilinear form on $\mathbb{Z}^{Q_0}$ defined by
$$
(\mathbf{d}^{(1)},\mathbf{d}^{(2)}):=2\sum_{v\in Q_0}d^{(1)}_vd^{(2)}_v-\sum_{a\in Q_1}d^{(1)}_{t(a)}d^{(2)}_{h(a)}-\sum_{a\in Q_1}d^{(1)}_{h(a)}d^{(2)}_{t(a)}.
$$
Write $p(\mathbf{d})=1-\frac{1}{2}(\mathbf{d},\mathbf{d})$. The fundamental region of $Q$ is the set of $0\ne \mathbf{d}\in\mathbb{Z}_{\ge0}^{Q_0}$ with connected support and with $(\mathbf{d},e_v)\le 0$ for all $v$. For any vertex $v\in Q_0$ (which should be loop-free so that $(e_v,e_v)=2$ if we work in a more general context beyond star-shaped quivers), there is a simple reflection $s_v:\mathbb{Z}^{Q_0}\rightarrow\mathbb{Z}^{Q_0}$ defined by $s_v(\mathbf{d}):=\mathbf{d}-(\mathbf{d},e_v)e_v$. The Weyl group associated to the underlying graph of $Q$ (i.e. what is obtained from $Q$ by forgetting the orientations of the arrows) is by definition the group generated by simple reflections. An element of $\mathbb{Z}^{Q_0}$ is a real root if it lies in the Weyl group orbit of a simple root, and an element of $\mathbb{Z}^{Q_0}$ is an imaginary root if it lies in the Weyl group orbit of an element of the fundamental region up to a sign. An imaginary root $\mathbf{d}$ is called isotropic if $p(\mathbf{d})=1$. Note also that $p(\mathbf{d})=0$ if $\mathbf{d}$ is a real root. The set of root $R$ consists of real roots and imaginary roots; it is the root system of the Kac-Moody Lie algebra associated to $Q$. We denote by $R^+$ the set of positive roots; that is, those roots with all coordinates nonnegative.

Define $R_{\mathbf{q}}^+:=\{\mathbf{d}\in R^+|\mathbf{q}^{\mathbf{d}}=1\}$ and
\begingroup
\allowdisplaybreaks
\begin{align*}
\Sigma_{\mathbf{q}}:=\{\mathbf{d}\in R^+_{\mathbf{q}}\mid&\text{if $\mathbf{d}=\sum_{s=1}^r\mathbf{d}^{(s)}$ with $r\ge2$ and each $\mathbf{d}^{(s)}\in R^+_{\mathbf{q}}$,}\\
&\text{then $p(\mathbf{d})>\sum_{s=1}^rp(\mathbf{d}^{(s)})$}\}.
\end{align*}
\endgroup

\begin{Con*}(\cite[Conjecture 1.4]{CB04})
Let $\mathcal{C}=(C_j)_{1\le j\le k}$ be a tuple of conjugacy classes of $\GL_n$, which defines a star-shaped quiver $Q$, a deformation parameter $\mathbf{q}$ and a dimension vector $\mathbf{d}$ as in \S \ref{subsec-Qui-Char}. Let $\Sigma_{\mathbf{q}}$ be the set of roots defined as above. Then,
the following statements are equivalent:
\begin{itemize}
\item[(i)] There is an irreducible solution to $A_1\cdots A_k=1$ with $(A_j)_j\in\mathcal{C}$. 
\item[(ii)] $\mathbf{d}\in\Sigma_{\mathbf{q}}$.
\end{itemize}
\end{Con*}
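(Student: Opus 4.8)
Since the sufficiency direction (ii)$\Rightarrow$(i) is already established by Crawley-Boevey--Shaw \cite{CBS}, the plan is to prove the necessity (i)$\Rightarrow$(ii), or rather its contrapositive: if $\mathbf{d}\notin\Sigma_{\mathbf{q}}$, then no simple representation of $(\mathbf{q},\mathbf{d})$ exists. By \eqref{eq-qd=1} we may assume $\mathbf{q}^{\mathbf{d}}=1$, and by the standard dimension count for the moment map $\mu$ a simple representation forces $\mathbf{d}$ to be a positive root; so we may assume $\mathbf{d}\in R^+_{\mathbf{q}}$ together with a decomposition $\mathbf{d}=\sum_{s=1}^r\mathbf{d}^{(s)}$ with $r\ge2$, each $\mathbf{d}^{(s)}\in R^+_{\mathbf{q}}$, and $p(\mathbf{d})\le\sum_sp(\mathbf{d}^{(s)})$. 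Refining each $\mathbf{d}^{(s)}$ that is not already in $\Sigma_{\mathbf{q}}$, the inequality persists and we may assume every $\mathbf{d}^{(s)}\in\Sigma_{\mathbf{q}}$. The first reduction is to position $\mathbf{d}$ conveniently: the simple reflections $s_v$ correspond, through \cite{CBS} and Katz's middle convolution, to isomorphisms between the multiplicative quiver varieties for $(\mathbf{q},\mathbf{d})$ and for $(s_v\mathbf{q},s_v\mathbf{d})$ that preserve simplicity, and they likewise preserve membership in $\Sigma$; hence I would act by the Weyl group to move $\mathbf{d}$ into the fundamental region. The real-root case then collapses to $\mathbf{d}=e_v$, where both statements are immediate, so the heart of the matter is $\mathbf{d}$ imaginary with connected support and $(\mathbf{d},e_v)\le0$ for all $v$.

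Next I would simplify the conjugacy classes and the graph. Using the nonabelian Hodge correspondence to pass to parabolic Higgs bundles and then varying the parabolic weights, the existence of a simple (equivalently irreducible) object is transported to a companion problem in which every conjugacy class $C_j$ is semi-simple; this is the step where the flexibility advertised in the abstract is used, and it translates the condition $\mathbf{d}\notin\Sigma_{\mathbf{q}}$ into the corresponding condition for the new data. With all classes semi-simple and $\mathbf{d}$ in the fundamental region, a careful analysis of the Tits form $(\mathbf{d},\mathbf{d})$ and of the decomposition $\mathbf{d}=\sum_s\mathbf{d}^{(s)}$ shows that the only borderline configurations---those for which the inequality in the definition of $\Sigma_{\mathbf{q}}$ fails to be strict---have support either (1) an affine Dynkin diagram, where $\mathbf{d}$ is (a multiple of) the minimal isotropic root $\delta$, or (2) an affine Dynkin diagram together with one extra vertex. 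Here the results of Schedler--Tirelli on the geometry and connectedness of multiplicative quiver varieties are what let me cut the general situation down to these two model cases.

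In case (1) the problem becomes exactly a matrix problem for which Kostov has established nonexistence of irreducible solutions, and I would invoke his result directly. Case (2) is the crux. Applying the known sufficiency \cite{CBS} to each $\mathbf{d}^{(s)}\in\Sigma_{\mathbf{q}}$ yields simple representations $M_s$ of dimension $\mathbf{d}^{(s)}$, whose direct sum $\bigoplus_sM_s$ is a polystable point of $\mathcal{M}(\mathbf{q},\mathbf{d})$. Suppose for contradiction that a simple representation $N$ of dimension $\mathbf{d}$ also exists. The key step is to show that $N$ and $\bigoplus_sM_s$ lie in the same connected component of $\mu^{-1}(\mathbf{q})$. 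Granting this, $\bigoplus_sM_s$ lies in the closure of the open simple locus and must therefore deform to a simple representation. Whether such a deformation exists is measured, through the \'etale slice at the polystable point, by the $\mathrm{Ext}^1$-groups between the distinct summands $M_s$; and the discrepancy $p(\mathbf{d})-\sum_sp(\mathbf{d}^{(s)})$ is, up to the standard sign, exactly the total dimension of these off-diagonal extension groups. The hypothesis $p(\mathbf{d})\le\sum_sp(\mathbf{d}^{(s)})$ makes this quantity nonpositive, so the summands admit no extensions permitting them to glue into a simple representation, contradicting the fact that $\bigoplus_sM_s$ is a limit of the simple $N$. This contradiction completes the contrapositive, hence the necessity.

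The main obstacle I anticipate is precisely the connectedness assertion in case (2): a priori the simple locus and the decomposable locus could sit in different components, in which case no contradiction arises. Establishing that a hypothetical simple representation is joined to $\bigoplus_sM_s$ within a single component requires the detailed geometry of the multiplicative quiver varieties attached to the affine-plus-one-vertex graphs---their irreducibility and connectivity from Schedler--Tirelli, together with an explicit deformation linking the two representation types. The reductions to semi-simple classes and to the two model graphs (the content of the first two steps) are technically substantial but, I expect, more routine than this final connectivity input, which carries the real weight of the argument.
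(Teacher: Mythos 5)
Your case-(2) argument hinges on the connectedness (hence, with normality, irreducibility) of $\mathcal{M}(\mathbf{q},\mathbf{d})$ for the affine-plus-one-vertex graph, and this is a genuine gap: you attribute that input to Schedler--Tirelli, but no such result exists in their work. They provide the classification of dimension vectors (the two model cases), the stratum dimension formula, and the resolution $\mathcal{M}_{\boldsymbol\theta}(\mathbf{q},\mathbf{d})\rightarrow\mathcal{M}(\mathbf{q},\mathbf{d})$ for generic $\boldsymbol\theta$ --- not connectedness; and since in case ($\mathbf{Aff}^{\infty}$) the deformation parameter is maximally non-generic ($q_{\infty}=\mathbf{q}^{\boldsymbol\delta}=1$), generic-parameter connectedness results do not apply either. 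Establishing this connectedness is precisely the paper's main new contribution, and it is done by the full reduction chain: lift to the resolution, identify it with a moduli space of filtered local systems (Yamakawa), pass through the nonabelian Hodge correspondence to parabolic Higgs bundles, vary the parabolic weight to a \emph{generic} one --- where surjectivity of the weight-variation map (Proposition \ref{Prop-Var2-inf}) is the delicate point and uses indivisibility of $\mathbf{d}$ in this case --- return through nonabelian Hodge, and forget the filtration (Proposition \ref{Prop-Iso3}) to land in a character variety with generic semi-simple classes, where the Hausel--Letellier--Rodriguez-Villegas connectedness theorem (Theorem \ref{Thm-Conn-gene}) applies; connectedness then propagates back through these surjective maps, and normality (Theorem \ref{Thm-Norm}) upgrades it to irreducibility. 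You do describe the nonabelian-Hodge-plus-weight-variation machinery in your second paragraph, but you deploy it only to make the classes semi-simple for Kostov's theorem in case (1); without redeploying it, together with all the surjectivity statements, to prove connectedness in case (2), your contradiction never gets started.

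Setting that gap aside, your mechanism for extracting the contradiction differs from the paper's and would be workable once connectedness is in hand. You propose an \'etale-slice argument at the polystable point $\bigoplus_s M_s$: no nearby simples exist because the multiplicity vector fails the $\Sigma_0$-condition for the Ext-quiver. To make this rigorous you need Kaplan--Schedler's local structure theorem and Crawley-Boevey's additive classification, and note that your numerical identification is off: $p(\mathbf{d})-\sum_{s}p(\mathbf{d}^{(s)})=1-r+\sum_{s<t}\dim\mathrm{Ext}^1(M_s,M_t)$ for distinct simples, so the discrepancy is not the bare sum of off-diagonal extension groups; what one actually uses is that equality in $p(\mathbf{d})\le\sum_s p(\mathbf{d}^{(s)})$ rules out membership of $(1,\ldots,1)$ in $\Sigma_0$ of the Ext-quiver. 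The paper instead argues by stratum dimensions: flatness gives $p(\mathbf{d})\ge\sum_s p(\mathbf{d}^{(s)})$ for every decomposition, so every representation-type stratum has dimension at most $2p(\mathbf{d})=2m$; the direct sums of $\rho_{\infty}$ with distinct $\boldsymbol\delta$-dimensional simples form a nonempty stratum of dimension exactly $2m$; and an irreducible variety cannot contain two disjoint locally closed subsets of full dimension, so the simple locus must be empty. The paper's count needs nothing about the local model beyond normality, which is why it is the shorter route.
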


The direction (ii)$\Rightarrow$(1) has been proved by Crawley-Boevey-Shaw; see \cite[Theorem 1.1]{CBS}. The purpose of this article is to prove the other direction:
\begin{ThmA}\label{ThmA}
Suppose that there exists an irreducible solution to $A_1\cdots A_k=1$ with $(A_j)_j\in\mathcal{C}$, and that $Q$, $\mathbf{q}$ and $\mathbf{d}$ are defined by $\mathcal{C}$. Then, $\mathbf{d}$ lies in $\Sigma_{\mathbf{q}}$.
\end{ThmA}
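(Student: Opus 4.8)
The plan is to work entirely on the quiver side, via the isomorphism (\ref{eq-multi=char-2}) and Crawley-Boevey-Shaw's dictionary between irreducible solutions and simple representations, so that the hypothesis becomes the existence of a simple representation in $\mathcal{M}(\mathbf{q},\mathbf{d})$ and the goal becomes $\mathbf{d}\in\Sigma_{\mathbf{q}}$. Two of the three requirements defining $\Sigma_{\mathbf{q}}$ are comparatively soft. The condition $\mathbf{q}^{\mathbf{d}}=1$ is exactly the necessary condition (\ref{eq-qd=1}) for nonemptiness, and the assertion that $\mathbf{d}$ is a positive root follows from the structure theory of the multiplicative preprojective algebra, whereby dimension vectors of simple modules are roots. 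Combining these gives $\mathbf{d}\in R^+_{\mathbf{q}}$ for free, so the entire difficulty is concentrated in the super-additivity of $p$: assuming $\mathbf{d}\in R^+_{\mathbf{q}}$, one must show that every decomposition $\mathbf{d}=\sum_{s=1}^r\mathbf{d}^{(s)}$ with $r\ge 2$ and each $\mathbf{d}^{(s)}\in R^+_{\mathbf{q}}$ satisfies the strict inequality $p(\mathbf{d})>\sum_s p(\mathbf{d}^{(s)})$.

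I would argue by contradiction, assuming that a \emph{violating} decomposition with $p(\mathbf{d})\le\sum_s p(\mathbf{d}^{(s)})$ exists, and aim to show that no simple representation can then occur. The guiding heuristic is dimensional: the expected dimension of the moduli is $2p(\mathbf{d})$, and the locus of representations that are direct sums of the types $\mathbf{d}^{(s)}$ already accounts for dimension at least $2\sum_s p(\mathbf{d}^{(s)})\ge 2p(\mathbf{d})$, so a nonempty open stratum of simple representations cannot coexist with this decomposable locus unless the two can be separated geometrically. To gain the flexibility needed to analyse this, I would pass through the nonabelian Hodge correspondence to the Dolbeault and de Rham descriptions by parabolic Higgs bundles and parabolic connections on $\mathbb{P}^1$, where the eigenvalue data of the $C_j$ are encoded in the parabolic weights and residue eigenvalues. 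Variation of the parabolic weights then lets me deform the stability condition and the combinatorial data while tracking the dimension vector, and I would use this freedom, together with the geometric results of Schedler-Tirelli on the structure and connectedness of multiplicative quiver varieties, to reduce to the case where every conjugacy class is semi-simple and the support of $\mathbf{d}$ is minimal.

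The reduction should bring the effective quiver into one of two shapes. In case (1) it is an affine Dynkin diagram, where $\mathbf{d}$ is a multiple $m\delta$ of the null root with $m\ge 2$; since $\delta$ is isotropic we have $p(m\delta)=1$ while $\sum_{s}p(\delta)=m$, so the decomposition $m\delta=\delta+\cdots+\delta$ violates super-additivity, and the required nonexistence of an irreducible solution is supplied directly by Kostov's classification. This case therefore closes immediately once the reduction is in place. In case (2) the quiver is an affine Dynkin diagram with one extra vertex attached, which is exactly where the anisotropic violating decompositions arise and where the substance of the argument lies.

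The main obstacle is case (2). Here I would establish the assertion highlighted in the abstract: if a simple representation of dimension $\mathbf{d}$ existed, then it would lie in the same connected component of $\mu^{-1}(\mathbf{q})$ (equivalently, of the relevant $\bs\theta$-stable locus) as a direct sum of representations of strictly smaller dimension vectors. Granting this, simplicity is contradicted, because a connected family specialising a simple representation to a decomposable one in the regime $p(\mathbf{d})\le\sum_s p(\mathbf{d}^{(s)})$ forces the decomposable locus to be top-dimensional in that component, so the open simple stratum must in fact be empty. The genuinely hard part is proving the connectedness claim itself, which I expect to require a delicate combination of the explicit geometry of the affine-plus-one-vertex quiver, the variation-of-weights deformation to move across stability walls, and the Schedler-Tirelli irreducibility and connectedness input to preclude the simple and decomposable loci from sitting in different components. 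Carrying this out uniformly over the reduction, and ensuring that the violating inequality is preserved through every nonabelian Hodge and weight-variation manipulation, is the crux of the whole argument.
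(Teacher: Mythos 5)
Your proposal is correct and follows essentially the same route as the paper: Schedler--Tirelli's classification (Theorem \ref{Thm-dimvec-ST}) isolates the two exceptional cases, nonabelian Hodge theory plus variation of parabolic weights reduces to semi-simple (almost) generic conjugacy classes, Kostov's Theorem \ref{Thm-Kos} disposes of the affine Dynkin case, and connectedness plus a dimension count of strata disposes of the affine-plus-extra-vertex case. The steps you flag as the crux --- the connectedness claim and the wall-avoiding weight variation --- are precisely what the paper establishes in Propositions \ref{Prop-Var2}, \ref{Prop-Var2-inf} and \ref{Prop-Iso3}, combined with Theorems \ref{Thm-Conn-gene} and \ref{Thm-Norm}.
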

\begin{Rem}
There is another proof of this theorem by Crawley-Boevey in his latest preprint \cite{CB25}, which  according to him has been uncirculated but complete since May 2018.
\end{Rem}

Let us remark that there are interesting variants of the Deligne-Simpson problem. We could ask whether there exist matrices $(A_j)_{1\le j\le k}$ in given adjoint orbits $\mathcal{O}_j\subset\gl_n(\mathbb{C})$ for $1\le j\le k$ satisfying:
\begin{itemize}
\item $A_1+\cdots+A_k=0$, and
\item there is no nontrivial proper vector subspace of $\mathbb{C}^n$ that is preserved by every $A_j$.
\end{itemize}
This is known as the additive Deligne-Simpson problem, and has been solved by Crawley-Boevey in \cite{CB03'}. Another variant naturally appears if we look at this problem through the Riemann-Hilbert correspondence; what we have been trying to do amounts to searching for some particular flat connections with regular singularities. However, we could also ask about connections with irregular singularities. It seems appropriate to call such a Deligne-Simpson problem wild, as opposed to the tame case considered in the present article. See Boalch \cite{Boa15}, Hiroe \cite{Hiroe}, Kulkarni-Livesay-Matherne-Nguyen-Sage \cite{KLMJNS} and Jacob-Yun \cite{JY} for various formulations and solutions in this direction.

\subsection{Strategy of the proof}\hfill

In a previous work of Schedler-Tirelli \cite{ST}, the possible dimension vectors of simple representations beyond the set $\Sigma_{\mathbf{q}}$ have been limited to types close to affine Dynkin (necessarily simply-laced). This is analogous to the results of Crawley-Boevey for additive quiver varieties; see \cite[\S 8]{CB01}, where he used some hard algebra to rule out these particular cases. In a recent preprint \cite{CB25}, he has managed to rule out these cases in the multiplicative setting, and the proof seems to be more difficult than in the additive setting. In this article, we propose to turn to geometric tools to circumvent the difficulty.

A special feature of star-shaped multiplicative quiver varieties is that they fit into the nonabelian Hodge correspondence in view of the isomorphism (\ref{eq-multi=char-2}). In some sense, this brings them closer to their additive cousins living in hyperk\"ahler geometry. It is still not known whether multiplicative quiver varieties for non-star-shaped quivers admit hyperk\"ahler structures; see however, a more general version of multiplicative quiver varieties introduced by Boalch \cite{Boa15}, as well as his conjecture \cite[\S 5]{Boa09} that these spaces are hyperk\"ahler in general.

The nonabelian Hodge correspondence changes the algebraic structure of the moduli space but preserves the stable objects, thus is fit for our purpose of finding irreducible solutions. Indeed, the use of nonabelian Hodge theory already appeared in Simpson's original paper \cite{Si91}. Another thing we can do to these moduli spaces is varying the stability conditions, or rather, the parabolic weights. It is a general fact that a generic slight perturbation of the stability condition produces a moduli space mapping to the original one, and stable objects lift to stable objects. We will use the nonabelian Hodge correspondence and variation of stability conditions to construct a sequence of maps
$$
\mathcal{M}(\overline{\mathcal{C}})\leftarrow M_1\leftarrow M_2\leftarrow\cdots\leftarrow M,
$$
until we reach a moduli space $M$ which we know well enough. We will then prove by contradiction. The existence of an irreducible local system in $\mathcal{M}(\overline{\mathcal{C}})$ will lead to a contradiction in the following two ways, depending on the combinatorics of the conjugacy classes $\mathcal{C}$:
\begin{itemize}
\item[(1)] We know that there exists no simple objects in $M$. This will be an application of a result of Kostov.
\item[(2)] The space $M$ is known to be connected. In fact, $M$ is a character variety defined by generic semi-simple conjugacy classes.
\end{itemize}
In the second case, we can moreover find a point $x\in M$ whose image $y\in\mathcal{M}(\overline{\mathcal{C}})$ is a direct sum of mutually nonisomorphic simple representations of lower dimensions. The connected component of $\mathcal{M}(\overline{\mathcal{C}})$ containing $y$ is irreducible in view of the normality proved by Kaplan-Schedler \cite{KS}. However, this component also contains the assumed simple representation as a result of (2). Thus, simple representations have to compete with direct sums of lower dimension representations for the generic point of this irreducible component, but we know from Crawley-Boevey-Shaw's theorem that the latter form a nonempty stratum and has the correct dimension, leaving no room for simple representations.

\subsection*{Acknowledgement}\hfill

I would like to thank Pengfei Huang, Emmanuel Letellier and Xueqing Wen for some helpful discussions. My thanks also go to Philip Boalch, who have oriented me through the literature on nonabelian Hodge theory. I am grateful to William Crawley-Boevey for kindly informing me of his results, encouraging me to release my preprint, and especially for developing many of the tools that are used in this paper.

\numberwithin{equation}{subsection}
\section{The proof}\label{Sec-Proof}

This section begins with a couple of ingredients that we will need in our proof of Theorem \ref{ThmA}, and the proof will be given in \S \ref{subsec-proof-ThmA}. Beware that \S \ref{subsec-reduction} only contains an overview of the crucial reduction steps, and that the details will be given in the next section.

\subsection{Normality of multiplicative quiver varieties}\hfill

Moduli spaces for 2-Calabi-Yau categories are formally locally isomorphic to formal neighbourhoods of points in additive quiver varieties. It follows from \cite{CB03} that such moduli spaces are normal. The 2-Calabi-Yau property for multiplicative preprojective algebras has been proved by Kaplan-Schedler in the cases where the quiver contains an oriented cycle. They are then able to show that for all quivers the formal neighbourhoods of multiplicative quiver varieties are isomorphic to the formal neighbourhoods of zero in some additive quiver varieties, despite the 2-Calabi-Yau property being conditional.

\begin{Thm}(\cite[Theorem 5.4]{KS})\label{Thm-Norm}
For any finite quiver, any deformation parameter $\mathbf{q}$ and any dimension vector $\mathbf{d}$, the corresponding multiplicative quiver variety $\mathcal{M}(\mathbf{q},\mathbf{d})$, if nonempty, is normal.
\end{Thm}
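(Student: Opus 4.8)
The plan is to reduce the statement to the additive case, where normality is already available, by comparing formal neighbourhoods. Normality can be checked formally-locally: since the map from a Noetherian local ring to its completion is faithfully flat and normality descends along faithfully flat morphisms, it suffices to show that the completed local ring $\widehat{\mathcal{O}}_{\mathcal{M}(\mathbf{q},\mathbf{d}),x}$ is normal at every closed point $x$. It is therefore enough to produce, for each such $x$, an isomorphism of formal neighbourhoods identifying $x$ with the origin of a suitable additive quiver variety. A closed point of the affine GIT quotient corresponds to a semisimple representation $M\cong\bigoplus_i S_i^{\oplus m_i}$ of the multiplicative preprojective algebra $\Lambda^{\mathbf{q}}(Q)$ determined by the data $(\mathbf{q},\mathbf{d})$, and the task is to understand the deformation theory of $M$.

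The key input is that the module category of $\Lambda^{\mathbf{q}}(Q)$ is, at least near the point in question, 2-Calabi-Yau, so that the formal deformations of $M$ are governed by the self-extensions $\mathrm{Ext}^{\bullet}(M,M)$ together with a cyclically symmetric higher structure. By the general local structure theory of 2-Calabi-Yau moduli (the multiplicative counterpart of the additive description in \cite{CB03}), the formal neighbourhood of $x$ is then isomorphic to the formal neighbourhood of $0$ in the additive quiver variety $\mathcal{M}_0(Q^{\mathrm{loc}},\mathbf{m})$, where $Q^{\mathrm{loc}}$ is the Ext-quiver of $M$, with $\dim\mathrm{Ext}^1(S_i,S_j)$ arrows from $i$ to $j$, and $\mathbf{m}=(m_i)_i$ records the multiplicities. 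The subtlety is that the 2-Calabi-Yau property of $\Lambda^{\mathbf{q}}(Q)$ is only known unconditionally when $Q$ contains an oriented cycle. To reach an arbitrary finite quiver, one enlarges $Q$ by adjoining an arrow, or a vertex carrying a loop, so as to create an oriented cycle, applies the local structure theorem on the enlarged quiver where the 2-Calabi-Yau property does hold, and then recovers the formal neighbourhood for the original $(\mathbf{q},\mathbf{d})$ by restricting to the locus where the added data are trivial. This is precisely the manoeuvre of Kaplan-Schedler, and it yields the formal local model for every quiver.

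Finally, additive quiver varieties are normal by \cite{CB03}: the fibre of the additive moment map over a central element is a reduced complete intersection of the expected dimension, and the associated affine GIT quotient inherits normality. In particular $\mathcal{M}_0(Q^{\mathrm{loc}},\mathbf{m})$ is normal, hence so is its completed local ring at $0$; transporting through the isomorphism of formal neighbourhoods shows that $\widehat{\mathcal{O}}_{\mathcal{M}(\mathbf{q},\mathbf{d}),x}$ is normal for every $x$, and normality of $\mathcal{M}(\mathbf{q},\mathbf{d})$ follows by the faithfully flat descent of the first paragraph. The main obstacle is establishing the formal local model itself, and in particular the unconditional reduction to the additive side: one must control the multiplicative moment map and the deformation theory of $M$ precisely enough to identify the formal neighbourhood with that of an additive quiver variety, all the while circumventing the fact that the 2-Calabi-Yau property underlying this identification is only conditionally known. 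Everything else, namely the descent of normality and the input from \cite{CB03}, is comparatively formal.
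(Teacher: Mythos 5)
This theorem is not actually proved in the paper: it is quoted verbatim from Kaplan--Schedler \cite[Theorem 5.4]{KS}, and the paragraph preceding it summarises precisely the strategy you outline --- formal-local identification of $\mathcal{M}(\mathbf{q},\mathbf{d})$ near a semisimple representation with an additive quiver variety at the origin, via the 2-Calabi-Yau property (known unconditionally only for quivers containing a cycle, hence the enlargement manoeuvre), followed by normality of additive quiver varieties from \cite{CB03} and faithfully flat descent. Your sketch is a faithful, somewhat more detailed rendering of that same argument, with the genuinely hard step (the unconditional formal local model) deferred to \cite{KS} exactly as the paper does.
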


\subsection{Schedler-Tirelli's classification of dimension vectors of simple representations}\hfill

The first step in Crawley-Boevey's classification of dimension vectors of simple representation of deformed preprojective algebras is to show that if $\mathbf{d}$ is such a dimension vector, then one of the following occurs (see \cite[\S 7 and \S 8]{CB01}):
\begin{itemize}
\item[(i)] $\mathbf{d}\in\Sigma_{\bs\lambda}$, where $\Sigma_{\bs\lambda}$ is the additive analogue of $\Sigma_{\mathbf{q}}$.
\item[(ii)] $\mathbf{d}$ contains a multiple of the minimal positive imaginary root of an affine Dynkin diagram.
\item[(iii)] The quiver breaks into two subquivers $Q'$ and $Q''$ that are connected by a single edge, and $\mathbf{d}$ takes value one on the connecting edge.
\end{itemize}
It is easy to show that if (iii) occurs, then a $\mathbf{d}$-dimensional representation is an extension of two representations, each supported on one of the two parts $Q'$ and $Q''$, and thus is not simple. This pattern remains in the multiplicative setting. Dimension vectors in (ii) fall into the following two classes:
\begin{itemize}
\item[(1)] \textit{Isotropic imaginary roots}. Note that the support of an isotropic root is necessarily an affine Dynkin diagram, say $Q$. Let $\bs\delta$ be the minimal positive imaginary root supported on $Q$. Then, $\mathbf{d}=m\bs\delta$ for some $m\in\mathbb{Z}_{>0}$.
\item[(2)] \textit{Flat roots}. These roots are called flat in \cite{ST} because the corresponding (additive or multiplicative) moment map is flat in a neighbourhood of the given deformation parameter. Up to admissible reflections (see \S \ref{subsec-Dim-theta-st}), $\mathbf{d}$ is a root of the following form: The support of $\mathbf{d}$ is of the form $Q\sqcup J$, where $Q$ is of affine Dynkin type with affine node $0$, $J$ is a subquiver containing a vertex $\infty$, and there is a single edge connecting $Q$ and $J$ via $0$ and $\infty$. Then, $\mathbf{d}=\mathbf{d}|_J+m\bs\delta$, where $m\in\mathbb{Z}_{>0}$, $d_{\infty}=1$ and $\bs\delta$ is again the minimal positive imaginary root for the quiver $Q$. 
\end{itemize}
\begin{Rem}
If we begin with a star-shaped quiver, then we may assume $J=\{\infty\}$. Indeed, a positive root $\mathbf{d}$ either has nonincreasing value along a leg or is of the form \cite[Equation (4)]{CB04} (i.e., a root supported on a single leg); since flat roots as in (2) are obviously not of the latter form, we deduce that $d_v=1$ for any $v\in J_0$ and that $J$ is a type A quiver. We may then apply the reflections at the vertices of $J$ to reduce to the case $J=\{\infty\}$.
\end{Rem}

\begin{Thm}(\cite[Corollary 6.18]{ST})\label{Thm-dimvec-ST}
Suppose that $\mathbf{d}$ is defined on a star-shaped quiver and that there exists a simple representation in $\mathcal{M}(\mathbf{q},\mathbf{d})$. Then, one of the following occurs:
\begin{itemize}
\item $\mathbf{d}\in\Sigma_{\mathbf{q}}$.
\item[($\mathbf{Aff}$)] $\mathbf{d}\in\mathbb{Z}_{\ge2}\cdot\Sigma_{\mathbf{q}}^{iso}$, where $\Sigma_{\mathbf{q}}^{iso}\subset\Sigma_{\mathbf{q}}$ is the subset of isotropic imaginary roots.
\item[($\mathbf{Aff}^{\infty}$)] $\mathbf{d}=e_{\infty}+m\bs\delta$ is a flat root with $m\ge2$, where $e_{\infty}$ is the simple root corresponding to the vertex $\infty$. Moreover, we have $q_{\infty}=\mathbf{q}^{\bs\delta}=1$.
\end{itemize}  
\end{Thm}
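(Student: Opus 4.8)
The plan is to transport Crawley--Boevey's additive analysis (\cite[\S7--\S8]{CB01}) to the multiplicative world, using the group-valued map $\mu$ of \eqref{eq-quasi-Ham} in place of the additive moment map and the local model of \cite{KS} to import the root-theoretic input. The first step is to pin $\mathbf{d}$ down as an element of $R^+_{\mathbf{q}}$: a simple representation in $\mathcal{M}(\mathbf{q},\mathbf{d})$ exists only if $\mathbf{d}$ is a positive root with $\mathbf{q}^{\mathbf{d}}=1$, and in that case the simple locus has the expected dimension $2p(\mathbf{d})$. The cleanest route is through the formal (or \'etale) local model of Kaplan--Schedler identifying a neighbourhood of a semisimple point of $\mathcal{M}(\mathbf{q},\mathbf{d})$ with a neighbourhood of $0$ in an additive quiver variety -- the same input behind the normality of Theorem \ref{Thm-Norm} -- which transfers the additive positivity and dimension statements to our setting.

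Next, assume $\mathbf{d}\in R^+_{\mathbf{q}}$ but $\mathbf{d}\notin\Sigma_{\mathbf{q}}$, so that by definition there is a decomposition $\mathbf{d}=\sum_{s=1}^r\mathbf{d}^{(s)}$ with $r\ge2$, each $\mathbf{d}^{(s)}\in R^+_{\mathbf{q}}$, and $p(\mathbf{d})\le\sum_s p(\mathbf{d}^{(s)})$. Expanding the bilinear form gives
\[
p(\mathbf{d})-\sum_{s=1}^r p(\mathbf{d}^{(s)})=(1-r)-\sum_{s<t}(\mathbf{d}^{(s)},\mathbf{d}^{(t)}),
\]
so the defining inequality forces the cross-terms $(\mathbf{d}^{(s)},\mathbf{d}^{(t)})$ to be as large (close to $0$) as the root combinatorics permits. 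Feeding this into the additive classification via the local model should reproduce Crawley--Boevey's trichotomy: either $\mathbf{d}\in\Sigma_{\mathbf{q}}$, or the support of $\mathbf{d}$ contains a multiple of the minimal imaginary root of an affine Dynkin subdiagram, or the support splits across a single edge on which $\mathbf{d}$ takes value $1$. The last case I would discard exactly as sketched before the theorem: a representation cut by such an edge is an extension of the two pieces supported on either side, hence never simple. This leaves the affine case, which I subdivide into isotropic multiples $\mathbf{d}=m\bs\delta$ and flat roots $\mathbf{d}=\mathbf{d}|_J+m\bs\delta$.

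For the isotropic multiples, $p(m\bs\delta)=1$ together with the existence of a simple of this dimension on the affine subdiagram forces $\mathbf{q}^{\bs\delta}=1$ and $\bs\delta\in\Sigma^{iso}_{\mathbf{q}}$, giving $(\mathbf{Aff})$ with $m\ge2$. For the flat roots I would invoke the Remark preceding the theorem: on a star-shaped quiver every positive root is either nonincreasing along each leg or supported on a single leg, and a flat root is of neither purely-leg type, so $J$ must be a type-$A$ segment of constant value $1$; applying the reflections $s_v$ at the vertices of $J$ reduces to $J=\{\infty\}$ and $\mathbf{d}=e_\infty+m\bs\delta$ with $m\ge2$. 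Finally the ``moreover'' clause: the decomposition witnessing $\mathbf{d}\notin\Sigma_{\mathbf{q}}$ contains at least one copy of $\bs\delta$ (present since $m\ge2$), which being a piece must lie in $R^+_{\mathbf{q}}$, so $\mathbf{q}^{\bs\delta}=1$; combined with $\mathbf{q}^{\mathbf{d}}=q_\infty(\mathbf{q}^{\bs\delta})^m=1$ this yields $q_\infty=1$.

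The main obstacle is the trichotomy itself in the multiplicative setting -- namely, showing that a non-$\Sigma_{\mathbf{q}}$ root carrying a simple representation must be of affine type. In the additive case this is the ``hard algebra'' of \cite[\S8]{CB01}, and transporting it faithfully requires either reworking the reflection-functor and canonical-decomposition machinery directly for the multiplicative preprojective algebra, or proving a comparison theorem ensuring that the local model of \cite{KS} respects the stratification by representation type and the existence of simples. Making such a comparison uniform across all deformation parameters $\mathbf{q}$ -- in particular handling the roots-of-unity phenomena that distinguish the multiplicative case from the additive one, and that are responsible for the genuinely new families $(\mathbf{Aff})$ and $(\mathbf{Aff}^\infty)$ surviving here -- is the delicate point.
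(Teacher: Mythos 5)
This statement is not proved in the paper at all: it is imported wholesale as \cite[Corollary 6.18]{ST}, and the only original content the paper adds is the Remark preceding it (that for star-shaped quivers one may apply reflections along $J$ to reduce a flat root to $J=\{\infty\}$). So the relevant comparison is between your proposal and Schedler--Tirelli's actual proof, and there your proposal has a genuine gap at its core. The entire difficulty of the theorem is the trichotomy: showing that a dimension vector in $R^+_{\mathbf{q}}\setminus\Sigma_{\mathbf{q}}$ carrying a simple representation must be an isotropic multiple $m\bs\delta$ or a flat root, with the stated constraints on $\mathbf{q}$. You correctly identify this as ``the main obstacle'' and ``the delicate point,'' but you do not prove it -- you only name two strategies (rework the reflection/decomposition machinery multiplicatively, or prove a comparison theorem for the Kaplan--Schedler local models respecting representation types) without carrying either out. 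This is precisely where Schedler--Tirelli do real work: their Theorem 6.16 rests on a flatness analysis of the multiplicative moment map (the paper itself points to \cite[Proposition 4.2]{Su} as the essential input), not on a formal transport of \cite[\S 8]{CB01} through \'etale-local models. A local-model argument is not obviously sufficient, because the local model at a point depends on the stabilizer and $\mathrm{Ext}$-quiver of a chosen semisimple representation, and controlling \emph{which} additive quiver varieties arise as one ranges over all of $\mathcal{M}(\mathbf{q},\mathbf{d})$ -- uniformly in $\mathbf{q}$, including the roots-of-unity cases -- is the very statement one is trying to prove.

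A second, smaller but genuine, error: your derivation of the ``moreover'' clause in ($\mathbf{Aff}^{\infty}$) does not go through. You claim the decomposition witnessing $\mathbf{d}\notin\Sigma_{\mathbf{q}}$ ``contains at least one copy of $\bs\delta$,'' but nothing forces this; for instance $e_{\infty}+m\bs\delta=(e_{\infty}+\bs\delta)+(m-1)\bs\delta$ is a decomposition into positive roots, and membership of $(m-1)\bs\delta$ in $R^+_{\mathbf{q}}$ only yields $\mathbf{q}^{(m-1)\bs\delta}=1$, i.e.\ that $\mathbf{q}^{\bs\delta}$ is an $(m-1)$-st root of unity, not that it equals $1$. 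In \cite{ST} the conditions $q_{\infty}=\mathbf{q}^{\bs\delta}=1$ are part of the definition of a flat root relative to $\mathbf{q}$ (they come out of the flatness analysis), not a consequence extracted from an arbitrary witnessing decomposition. So even granting the trichotomy, the parameter constraints in ($\mathbf{Aff}$) and ($\mathbf{Aff}^{\infty}$) would still need an argument you have not supplied.
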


It is easy to see that the dimension vectors of type ($\mathbf{Aff}^{\infty}$) do not lie in $\Sigma_{\mathbf{q}}$. The dimension vectors $\mathbf{d}$ of type ($\mathbf{Aff}$) that do not lie in $\Sigma_{\mathbf{q}}$ are of the form $m\bs\delta$, where $\mathbf{q}^{\bs\delta}=\zeta$ is an $l$-th primitive root of unity and $l<m$, while $l\bs\delta$ lies in $\Sigma_{\mathbf{q}}$ for such $\mathbf{q}$ and $l$. By \cite[Theorem 1.1]{CBS}, the stable locus of $\mathcal{M}(\mathbf{q},l\bs\delta)$ is nonempty.

Theorem \ref{ThmA} will be proved if we show that there exists no simple representation of type ($\mathbf{Aff}$) or ($\mathbf{Aff}^{\infty}$). After reducing the problem to simpler situations, as we explain in \S \ref{subsec-reduction}, we will need the results of \S \ref{subsec-Kostov} and \S \ref{subsec-conn-char} to treat ($\mathbf{Aff}$) and ($\mathbf{Aff}^{\infty}$) respectively.

\subsection{Connectedness of character varieties}\label{subsec-conn-char}\hfill

\begin{Defn}\label{Defn-gene-CC}
Let $\mathcal{C}=(C_j)_{1\le j\le k}$ be a tuple of conjugacy classes of $\GL_n$, and suppose that $C_j$ has eigenvalues $(\xi_{j,i})_{1\le i\le n}$. We say that $\mathcal{C}$ is generic if $\prod_{j=1}^k\prod_{i=1}^n\xi_{j,i}=1$ and for any $0<N<n$, any tuple of sets $(I_j)_{1\le j\le k}$ with $I_j\subset\{1,2\cdots,n\}$ and $|I_j|=N$, we have
\begin{equation}\label{eq-Defn-gene-CC}
\prod_{j=1}^k\prod_{i\in I_j}\xi_{j,i}\ne 1.
\end{equation}
\end{Defn}

The connectedness of character varieties is known under the genericity assumption on eigenvalues. Here is an orientation through the literature. If $\bar{\mathcal{C}}$ consists of generic semi-simple conjugacy classes and all eigenvalues have absolute value equal to one, then the nonabelian Hodge correspondence gives a diffeomorphism to the moduli of stable (strongly) parabolic Higgs bundles, which is well-known to be connected. For more general generic semi-simple conjugacy classes, the connectedness can be shown by counting points over finite fields; see the result of Hausel-Letellier-Rodriguez-Villegas \cite[\S 5]{HLRV}. If monodromies are allowed to have nontrivial unipotent part, then the associated character variety admits a Springer-type resolution; see, for example, \cite[\S 3.3]{Le15}. There are two ways to see that the resolution is connected. One option is again point-counting; by \cite[Theorem 3.12 and Corollary 3.14]{Le15}, the number of connected components of the resolution is equal to that of a character variety with semi-simple monodromies. Another option appeared in Ballandras' thesis work \cite{Ball}, where he showed that the resolution is diffeomorphic to a character variety with semi-simple monodromy using transcendental description of the moduli spaces. (Although we do not need Ballandras' result, our reduction precedure in \S \ref{subsec-reduction} will be a modification of his method.) 
\begin{Thm}(\cite[Theorem 5.1.1]{HLRV})\label{Thm-Conn-gene}
For any tuple of generic semi-simple conjugacy classes $\mathcal{C}$ and any genus $g\ge0$, the associated character variety $\mathcal{M}(\mathcal{C})$, if nonempty, is connected.
\end{Thm}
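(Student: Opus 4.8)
The plan is to prove connectedness arithmetically, by counting the points of $\mathcal{M}(\mathcal{C})$ over finite fields and reading off the number of connected components from the leading term. The starting observation is that under the genericity hypothesis the character variety is smooth of pure dimension, say $d$; hence its number of geometrically connected components equals $\dim H^{2d}_c(\mathcal{M}(\mathcal{C}),\mathbb{Q})(d)$, which is the coefficient of the top power $q^d$ in its $E$-polynomial. So it suffices to show that $\mathcal{M}(\mathcal{C})$ is of polynomial count and that the counting polynomial $P(q)=|\mathcal{M}(\mathcal{C})(\mathbb{F}_q)|$ is monic, i.e. $P(q)=q^d+(\text{lower order})$. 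By Katz's theorem (the arithmetic-to-Hodge comparison in the appendix of Hausel--Rodriguez-Villegas), polynomial count forces the $E$-polynomial to equal $P$, so the leading coefficient of $P$ is exactly the number of connected components.

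First I would set up the count over $\mathbb{F}_q$. Spreading out the eigenvalues to a finite field for a density-one set of primes, one uses the classical Frobenius mass formula: the number of tuples $((A_i,B_i)_i,(X_j)_j)$ with $X_j$ in the $\mathbb{F}_q$-points of $C_j$ and $\prod_i[A_i,B_i]\prod_j X_j=1$ is
\[
\Big(\prod_{j=1}^k |C_j|\Big)\,|\GL_n(\mathbb{F}_q)|^{2g-1}\sum_{\chi}\frac{\prod_{j=1}^k \chi(C_j)/\chi(1)}{\chi(1)^{2g-2}},
\]
the sum running over irreducible characters $\chi$ of $\GL_n(\mathbb{F}_q)$. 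Dividing by the order of the quotient group to pass to the GIT quotient, and using that genericity guarantees every point of $\mu^{-1}(1)$ to have stabiliser reduced to the central scalars (so that the quotient is a genuine smooth variety and the stacky count coincides with the variety count), one obtains $P(q)$ as a $\mathbb{Q}$-linear combination of the character ratios $\prod_j \chi(C_j)/\chi(1)$ weighted by powers of $\chi(1)$.

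The analytic heart, and the main obstacle, is to show that this character sum is a polynomial in $q$ and to isolate its top coefficient. Here genericity is essential twice over: it removes all the degenerate terms in which a proper sub-sum of eigenvalues multiplies to $1$ (exactly the condition \eqref{eq-Defn-gene-CC}), and it is what lets one rewrite the sum in terms of the Green functions of $\GL_n(\mathbb{F}_q)$ and the associated Hall--Littlewood symmetric-function machinery, so that the whole expression becomes manifestly polynomial after a generating-function manipulation indexed by types and partitions. Extracting the coefficient of $q^d$ then reduces to a combinatorial identity: I expect the regular contributions to dominate and to combine to give leading coefficient $1$, which is precisely the statement of connectedness. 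Controlling this cancellation uniformly in $n$ and in the $C_j$ is the technically demanding step, and it is where the full computation of the mixed Hodge polynomial in \cite{HLRV} is needed.

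Finally, in the special case where all eigenvalues have absolute value one I would instead argue transcendentally: the nonabelian Hodge correspondence identifies $\mathcal{M}(\mathcal{C})$ diffeomorphically with a moduli space of stable parabolic Higgs bundles, and connectedness of the latter follows from the standard Morse-theoretic argument for the Hitchin $\mathbb{C}^{\ast}$-action, whose downward flow retracts the whole space onto the connected nilpotent cone. This gives an independent check in the unitary case but does not cover general generic semi-simple classes, for which the point-counting argument above is required.
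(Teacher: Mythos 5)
This theorem is quoted by the paper directly from \cite{HLRV} with no independent proof; the surrounding discussion in \S\ref{subsec-conn-char} merely points to the same two routes you describe, namely point counting over finite fields for general generic semi-simple classes and the nonabelian Hodge identification with parabolic Higgs moduli in the unitary case. Your sketch is a faithful outline of the argument in \cite{HLRV} (Frobenius mass formula, freeness of the $\PGL_n$-action under genericity, Katz's polynomial-count theorem, monicity of the counting polynomial giving one top-dimensional class), so it takes essentially the same approach as the paper's source, with the genuinely hard step --- polynomiality of the character sum and the identification of its leading coefficient as $1$ --- correctly isolated but, as you acknowledge, deferred to the full computation in \cite{HLRV}.
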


\subsection{The result of Kostov}\label{subsec-Kostov}\hfill

The nonexistence of simple representations in the case ($\mathbf{Aff}$) will be eventually reduced to a result of Kostov, which we translate into the quiver language below. Let $Q$ be an affine Dynkin quiver of type $\tilde{D}_4$, $\tilde{E}_6$, $\tilde{E}_7$ or $\tilde{E}_8$, and denote by $\bs\delta$ the minimal positive imaginary root in each case. Then, the following precedure recovers a tuple of conjugacy classes from an integer $m\ge1$ and a deformation parameter $\mathbf{q}\in (\mathbb{C}^{\ast})^{Q_0}$ with $q_v\ne 1$ for any $v\in Q_0$, reversing the construction in \S \ref{subset-MQV}. 

In the case of $\tilde{D}_4$, let $k=4$, and let $k=3$ in all other cases. Write $n=m\delta_{\star}$ (recall that for a star-shaped quiver, the central vertex is denoted by $\star$). We have $\delta_{\star}=2$, $3$, $4$ and $6$ in the cases $\tilde{D}_4$, $\tilde{E}_6$, $\tilde{E}_7$ or $\tilde{E}_8$ respectively, and we will construct $k$ conjugacy classes of $\GL_{n}$ in each case. For $1\le j\le k$, choose complex numbers $\xi_{[j,0]}$ such that $\prod_{j=1}^k\xi_{[j,0]}=q_{\star}$ and define $\xi_{[j,i]}=q_{[j,i]}\xi_{[j,i-1]}$ for $1\le i\le \nu_j$; let $C_j$ be the semi-simple conjugacy class with eigenvalues $\xi_{[j,i]}$, $0\le i\le \nu_j$, each having multiplicity $m(\delta_{[j,i]}-\delta_{[j,i+1]})$. In the case of $\tilde{D}_4$ and $\tilde{E}_6$, every eigenvalue has multiplicity $m$. In the case of $\tilde{E}_7$, two conjugacy classes only have multiplicity-$m$ eigenvalues, while the remaining one has two eigenvalues of multiplicity $2m$. In the case of $\tilde{E}_8$, one conjugacy class only has multiplicity-$m$ eigenvalues, one conjugacy class only has multiplicity-$2m$ eigenvalues, and the remaining one has two eigenvalues of multiplicity $3m$. The resulting conjugacy classes for difference choices of the $\xi_{[j,0]}$ only differ by scalar matrices, and the solvability of the Deligne-Simpson problem is not affected. The above explicit description of these conjugacy classes shows that they satisfy:
\begin{equation}\label{eq-kappa=0}
\sum_{j=1}^kC_j=2n^2,
\end{equation}
which is equivalent to the condition $\kappa=0$ in \cite{Kos01}. Since $\mathbf{q}^{m\bs\delta}=1$, the number $\zeta:=\mathbf{q}^{\bs\delta}$ is an $m$-th root of unity. Denote the order of $\zeta$ by $l$.
\begin{Defn}
We say that the conjugacy classes $(C_j)_{1\le j\le k}$ are \textit{almost generic} if the only dimension vectors $0<\bs\gamma\le m\bs\delta$ satisfying $\mathbf{q}^{\bs\gamma}=1$ are of the form $\bs\gamma=m'l\bs\delta$ for some $m'\in\mathbb{Z}_{>0}$.
\end{Defn}
Almost generic conjugacy classes are called \textit{relatively generic} by Kostov, and he calls the equality $\mathbf{q}^{l\bs\delta}=1$ \textit{the basic nongenericity relation}; see \cite[\S 2.2, Remark 8 and Definition 9]{Kos01}. To compare this definition with Definition \ref{Defn-gene-CC}, we remark that $\gamma_{\star}$ should be thought of as the number $N$ there, and (\ref{eq-Defn-gene-CC}) amounts to saying the no relation of the form $\mathbf{q}^{\bs\gamma}=1$ is allowed.
\begin{Thm}(\cite[Theorem 15]{Kos01})\label{Thm-Kos}
Suppose that $m\ge 2$, $l<m$, and that the semi-simple conjugacy classes $(C_j)_{1\le j\le k}$ defined above satisfy (\ref{eq-kappa=0}) and are almost generic. Then, there exists no irreducible solution to $\prod_{j=1}^kA_j=\Id$ with $A_j\in C_j$.
\end{Thm}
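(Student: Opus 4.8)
The plan is to compare the simple locus of $\mathcal{M}(\mathbf{q},m\bs\delta)$ with the locus of direct sums and to derive a contradiction from dimensions. First I would unwind the nongenericity relation. Since $\zeta=\mathbf{q}^{\bs\delta}$ has order $l$ and $\zeta^m=\mathbf{q}^{m\bs\delta}=1$, we get $l\mid m$; writing $m=lm'$, the hypothesis $l<m$ forces $m'\ge2$. The almost generic assumption says that every $0<\bs\gamma\le m\bs\delta$ with $\mathbf{q}^{\bs\gamma}=1$ is a multiple of $l\bs\delta$; in particular $l\bs\delta\in R^+_{\mathbf{q}}$ has no proper decomposition inside $R^+_{\mathbf{q}}$, so $l\bs\delta\in\Sigma_{\mathbf{q}}$. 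By \cite[Theorem 1.1]{CBS} the surface $S:=\mathcal{M}(\mathbf{q},l\bs\delta)$ then has a nonempty simple locus, which is smooth of dimension $2p(l\bs\delta)=2$.

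Next I would produce a large family of strictly semisimple points. For pairwise non-isomorphic simples $R_1,\dots,R_{m'}\in S$, the assignment $\{R_s\}\mapsto[\,\bigoplus_s R_s\,]$ gives a generically injective morphism $\Sym^{m'}(S)\to\mathcal{M}(\mathbf{q},m\bs\delta)$, since a semisimple representation is determined by the multiset of its composition factors; hence $\mathcal{M}(\mathbf{q},m\bs\delta)$ contains a subvariety of dimension $2m'\ge4$. If, on the contrary, a simple representation of dimension $m\bs\delta$ existed, its presence would make the simple locus a nonempty open subset, smooth of dimension $2p(m\bs\delta)=2$. By Theorem \ref{Thm-Norm} the variety $\mathcal{M}(\mathbf{q},m\bs\delta)$ is normal, so on any connected component it is irreducible and of pure dimension.

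The contradiction is then immediate \emph{provided} $\mathcal{M}(\mathbf{q},m\bs\delta)$ is connected: a nonempty open subset of an irreducible variety is dense, which would force $2=\dim(\text{simple locus})=\dim\mathcal{M}(\mathbf{q},m\bs\delta)\ge2m'\ge4$. The main obstacle is exactly this connectedness. Since the classes $(C_j)_j$ are only almost generic, Theorem \ref{Thm-Conn-gene} does not apply, and a priori the two-dimensional simple locus could split off as its own component, in which case the dimension count says nothing. I see two ways to close the gap. The first is geometric, in the spirit of \S\ref{subsec-reduction}: deform the stability condition and travel along the nonabelian Hodge correspondence to identify $\mathcal{M}(\mathbf{q},m\bs\delta)$ with (a deformation of) $\Sym^{m'}(S)$, which is connected and carries no spurious two-dimensional component. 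The second, which is the unconditional route secured in \cite[Theorem 15]{Kos01}, is to extract directly from the basic nongenericity relation $\mathbf{q}^{l\bs\delta}=1$ a nonzero proper subspace stabilised by every $A_j$, contradicting simplicity without any recourse to connectedness; this is the step I expect to be genuinely delicate, as it must exploit the precise eigenvalue multiplicities of the four affine types $\tilde{D}_4,\tilde{E}_6,\tilde{E}_7,\tilde{E}_8$.
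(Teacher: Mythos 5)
Your dimension count is sound as far as it goes, and you have correctly located its weak point yourself: everything hinges on the connectedness (equivalently, given normality from Theorem \ref{Thm-Norm}, the irreducibility) of $\mathcal{M}(\mathbf{q},m\bs\delta)$, and neither of your proposed repairs supplies it. The second repair is openly circular: it invokes \cite[Theorem 15]{Kos01}, which is precisely the statement to be proved. The first repair is also circular, though less visibly so, within this paper's architecture: the identification of $\mathcal{M}(\mathbf{q},m\bs\delta)$ with $S^{m'}\mathcal{M}(\mathbf{q},l\bs\delta)$ is Theorem \ref{Thm-Sym-iso}, whose proof (surjectivity of the direct-sum map via Theorem \ref{Thm-theta-st-dim}, connectedness via Proposition \ref{Prop-Conn-nongen}) runs through the proof of Theorem \ref{ThmA}, and case ($\mathbf{Aff}$) of that proof is exactly where Theorem \ref{Thm-Kos} is consumed. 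Nor can the reduction diagram deliver connectedness here: since $\mathbf{d}=m\bs\delta=m'(l\bs\delta)$ is divisible, genuinely generic weights and eigenvalues do not exist, the reduction only reaches \emph{almost} generic semi-simple classes $\mathcal{C}'$, and Theorem \ref{Thm-Conn-gene} requires genericity; likewise the hypotheses of Proposition \ref{Prop-Conn-nongen} (that $\mathbf{d}\in\Sigma_{\mathbf{q},\bs\theta}$ and $\mathbf{q}^{\mathbf{d}_0}$ has order $m$) fail exactly when $l<m$. So the gap is real, and it is in fact the reason the case ($\mathbf{Aff}$) cannot be handled by the same argument as case ($\mathbf{Aff}^{\infty}$) in \S \ref{subsec-proof-ThmA}.

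For comparison, the paper offers no proof of this statement at all: it is imported as a black box from \cite[Theorem 15]{Kos01}, and Kostov's own argument is of a completely different nature---an explicit analysis of eigenvalue multiplicities for the types $\tilde{D}_4$, $\tilde{E}_6$, $\tilde{E}_7$, $\tilde{E}_8$ subject to (\ref{eq-kappa=0}) and the basic nongenericity relation $\mathbf{q}^{l\bs\delta}=1$, not a moduli-space dimension count. The logical role of Theorem \ref{Thm-Kos} in this paper is precisely to settle the one configuration where the geometric strategy you are reproducing breaks down for lack of connectedness; conversely, once Kostov's theorem is granted, the paper \emph{derives} the connectedness and the symmetric-product decomposition you wanted to use (Proposition \ref{Prop-Conn-nongen}, Theorem \ref{Thm-Sym-iso}) as downstream consequences. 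An independent proof along your lines would require establishing connectedness of $\mathcal{M}(\mathbf{q},m\bs\delta)$ for almost generic $\mathbf{q}$ without passing through Kostov, and no such argument appears either in the paper or in your proposal.
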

For not necessarily semi-simple conjugacy classes, Kostov also has a conditional nonexistence result; see \cite[Theorem 29]{Kos01}.

\subsection{Reduction to semi-simple character varieties}\label{subsec-reduction}\hfill

Let us recall the setting that we will be working in. In case ($\mathbf{Aff}$), the quiver $Q$ is an affine Dynkin graph, $\bs\delta$ denotes the minimal positive imaginary root, $\mathbf{d}=m\bs\delta$ and $\mathbf{q}^{\bs\delta}$ is a primitive $l$-th root of unity with $l<m$. In case ($\mathbf{Aff}^{\infty}$), there is an extra vertex $\infty$ connected to the affine node $0$ by a single edge; moreover, $d_{\infty}=1$, $\mathbf{d}|_{Q}=m\bs\delta$ with $m\ge2$ and $q_{\infty}=\mathbf{q}^{\bs\delta}=1$. The simple root corresponding to $\infty$ is $e_{\infty}$, and we denote by $\rho_{\infty}$ the simple representation of dimension $e_{\infty}$. The main difference between ($\mathbf{Aff}$) and ($\mathbf{Aff}^{\infty}$) is that in the former case $\mathbf{d}$ is divisible (i.e., there exists $\mathbf{d}'\in\mathbb{Z}_{\ge0}^{Q_0}$ and $n'\in\mathbb{Z}_{\ge2}$ such that $\mathbf{d}=n'\mathbf{d}'$), whereas in the latter case $\mathbf{d}$ is not. Divisibility determines whether there exist generic stability conditions.

The reduction of the problem will be achieved by  constructing the following diagram
\begin{equation}\label{eq-red-diag}
\begin{tikzcd}[row sep=2.5em, column sep=2.5em]
\mathcal{M}(\mathbf{q},\mathbf{d}) \arrow[d, equal, "\mathbf{Iso}_1"'] & \mathcal{M}_{\bs\theta}(\mathbf{q},\mathbf{d}) \arrow[l, "\mathbf{Var}_0"'] \arrow[d, equal, "\mathbf{Iso}_2"] & \\
\mathcal{M}_B(\overline{\mathcal{C}}) & \mathcal{M}_B(\mathbf{d},\bs\beta,\bs\xi) \arrow[l, "\mathbf{Var}_1"] \arrow[r, "\mathbf{NH}_1"] & \mathcal{M}_{Dol}(\mathbf{d}',\bs\alpha',(\mathcal{O}'_j)_j) \\
\mathcal{M}_B(\mathcal{C}') & \mathcal{M}_B(\mathbf{d},\bs\beta,\bs\xi') \arrow[l, "\mathbf{Iso}_3"] \arrow[r, "\mathbf{NH}_2"'] & \mathcal{M}_{Dol}(\tilde{\mathbf{d}},\bs\alpha,(\mathcal{O}_j)_j). \arrow[u, "\mathbf{Var}_2"']
\end{tikzcd}
\end{equation}
The construction of the these spaces and morphisms will occupy the entire \S \ref{Sec-Reduction}; however, let us explain the first step $\mathbf{Var}_0$. 

\begin{itemize}
\item $\mathcal{M}(\mathbf{q},\mathbf{d})$ is a multiplicative quiver variety of type ($\mathbf{Aff}$) or ($\mathbf{Aff}^{\infty}$) as in Theorem \ref{Thm-dimvec-ST}.
\item $\mathcal{M}_{\bs\theta}(\mathbf{q},\mathbf{d})$ is a multiplicative quiver variety with a stability condition $\bs\theta$.
\item $\mathbf{Var}_0$ is the morphism induced by varying the stability condition.
\end{itemize}
The requirement on $\bs\theta$ is as follows. In case ($\mathbf{Aff}^{\infty}$), by \cite[Theorem 6.23]{ST}, for a generic $\bs\theta$ the morphism $\mathbf{Var}_0$ is a resolution. For our purpose, we need to choose $\bs\theta$ in such a way that $\theta_v>0$ for every $v\ne\star$; this is to guarantee that the resulting multiplicative quiver variety is isomorphic to a suitable moduli space of filtered local systems. But this is no serious restriction, and we can choose such a $\bs\theta$ which is also generic. In case ($\mathbf{Aff}$), we simply choose $\bs\theta$ which is strictly positive away from $\star$ with $\bs\theta\cdot\bs\delta=0$.

The rest of diagram (\ref{eq-red-diag}) consists of the following objects. 
\subsubsection*{Betti moduli spaces}
Filtered local systems and their moduli spaces will be defined in \S \ref{subsec-FLS} below.
\begin{itemize}
\item $\mathcal{M}_B(\overline{\mathcal{C}})$ is the character variety with monodromies in closures of conjugacy classes.
\item $\mathcal{M}_B(\mathbf{d},\bs\beta,\bs\xi)$ is a moduli space of filtered local systems. In the associated graded of the filtered structures, the monodromies are scalar matrices; however, the same eigenvalue may appear in different graded spaces.
\item $\mathcal{M}_B(\mathbf{d},\bs\beta,\bs\xi')$ is another moduli space of filtered local systems. The monodromies are again scalar matrices in the associated graded, but the eigenvalues can distinguish different graded spaces.
\item $\mathcal{M}_B(\mathcal{C}')$ is a character varieties with monodromies in semi-simple conjugacy classes.
\end{itemize}

\subsubsection*{Dolbeault moduli spaces}

Parabolic Higgs bundles and their moduli spaces will be defined in \S \ref{subsec-PHB} below.
\begin{itemize}
\item $\mathcal{M}_{Dol}(\mathbf{d}',\bs\alpha',(\mathcal{O}'_j)_j)$ is a moduli space of parabolic Higgs bundles. The residues of the Higgs fields lie in prescribed semi-simple adjoint orbits when passing to the Levi quotients of the parabolic structures, but these adjoint orbits need not be central in the Levi.
\item $\mathcal{M}_{Dol}(\tilde{\mathbf{d}},\bs\alpha,(\mathcal{O}_j)_j)$ is another moduli space of parabolic Higgs bundles. The parabolic structures refine those in $\mathcal{M}_{Dol}(\mathbf{d}',\bs\alpha',(\mathcal{O}'_j)_j)$ in such a way that the residues of the Higgs fields are central when passing to the Levi quotients.
\end{itemize}

\subsubsection*{Morphisms}
\begin{itemize}
\item $\mathbf{Iso}_1$ and $\mathbf{Iso}_2$ are the identification between multiplicative quiver varieties and the moduli spaces of (filtered) local systems. More precisely, $\mathbf{Iso}_1$ is the isomorphism (\ref{eq-multi=char-2}), and $\mathbf{Iso}_2$ is given by \cite[Theorem 1.2]{Yama} where we need $\theta_v>0$ for all $v\ne\star$. The dimension vectors on the two sides of $\mathbf{Iso}_2$ are the same in a suitable sense; see \cite[Theorem 4.16]{Yama} and \S \ref{subsec-Comb-data}. 
\item $\mathbf{Var}_1$ is induced by the functor of forgetting the filtered structures; it is compatible with $\mathbf{Var}_0$.
\item $\mathbf{NH}_1$ and $\mathbf{NH}_2$ are nonabelian Hodge correspondences, which are homeomorphisms of topological spaces that preserve stable objects; see \S \ref{subsec-NH} below.
\item $\mathbf{Var}_2$ is induced by variation of weights of parabolic Higgs bundles; see \S \ref{subsec-VPW} below. In particular, the weight $\bs\alpha$ is more generic than $\bs\alpha'$.
\item $\mathbf{Iso}_3$ is an isomorphism of Betti moduli spaces; see \S \ref{subsec-FFS} below. The key assumption behind the statement is that the eigenvalues are (almost) generic, which is a consequence of our choice of $\bs\alpha$.
\end{itemize}

\subsection{The proof of Theorem \ref{ThmA}}\label{subsec-proof-ThmA}\hfill

We will need to identify multiplicative quiver varieties and character varieties in our proof; however, not every multiplicative quiver variety for a star-shaped quiver is isomorphic to a character variety. As we have seen, the relevant quivers are star-shaped affine Dynkin diagram, possibly with an extra vertex joined to the affine node. It is easy to verify that in every case that concerns us, the dimension vector satisfies \cite[Equation (7)]{CB04}; that is, the integers $d_{[j,i-1]}-d_{[j,i]}$ for vertices $[j,i]$ with equal defomation parameters are nonincreasing along the legs of the star-shaped quiver. It follows that such multiplicative quiver varieties are indeed isomorphic to character varieties.

Our goal is show that either in case ($\mathbf{Aff}$) or ($\mathbf{Aff}^{\infty}$), there is no simple representation of dimension $\mathbf{d}$. We will prove by contradiction; therefore, we consider a hypothetical simple representation $\rho_s\in \mathcal{M}(\mathbf{q},\mathbf{d})$ either in case ($\mathbf{Aff}^{\infty}$) or ($\mathbf{Aff}$). A simple representation is necessarily $\bs\theta$-stable. In either case, there is a point $\rho'_s\in\mathcal{M}_{\bs\theta}(\mathbf{q},\mathbf{d})$ such that $\mathbf{Var}_0(\rho'_s)=\rho_s$. 

\textit{The case $(\mathbf{Aff})$}. Regard $\rho'_s$ as an element of $\mathcal{M}_B(\mathbf{d},\bs\beta,\bs\xi)$. Then, $\mathbf{NH}_1$ sends it to a $\bs\alpha'$-stable parabolic Higgs bundle $(E'_s,\Phi'_s)$. We will show in Proposition \ref{Prop-Var2} that there exists an $\bs\alpha$-stable parabolic Higgs bundle $(E_s,\Phi_s)$ that is mapped to $(E'_s,\Phi'_s)$ by $\mathbf{Var}_2$; here, $\bs\alpha$ is chosen to be almost generic (see Definition \ref{defn-almost-gen-w}). Passing to the Betti side of $\mathbf{NH}_2$, we find a $\bs\beta$-stable filtered local system $\mathscr{L}$, where the eigenvalues $\bs\xi'$ are almost generic. Then, Proposition \ref{Prop-Iso3} will show that $\mathbf{Iso}_3(\mathscr{L})$ is an irreducible local system with monodromies in $\mathcal{C}'$. However, this contradicts Kostov's Theorem \ref{Thm-Kos}. Indeed, the conjugacy classes $\mathcal{C}'$ satisfy the assumptions in Kostov's theorem by Lemma \ref{Lem-Betti-Dol-indiv}, Remark \ref{Rem-Betti-Dol-indiv} and Lemma \ref{Lem-almost-Dol-to-Betti}. We conclude that there exists no simple representation in $\mathcal{M}(\mathbf{q},\mathbf{d})$.

\textit{The case $(\mathbf{Aff}^{\infty})$}. The theorem of Kostov does not apply in this case, since the semi-simple conjugacy classes $\mathcal{C}'$ are generic. We will therefore adopt a different strategy. As we have seen, both $\mathbf{Var}_0$ and $\mathbf{Var}_1$ are surjective in this case. Proposition \ref{Prop-Var2-inf} will show that $\mathbf{Var}_2$ is surjective. Since $\bs\alpha$ is chosen to be generic, the corresponding eigenvalues $\bs\xi'$ are also generic. By Proposition \ref{Prop-Iso3}, the morphism $\mathbf{Iso}_3$ is an isomorphism. By Theorem \ref{Thm-Conn-gene}, the character variety $\mathcal{M}_B(\mathcal{C}')$ is connected. It follows that every moduli space in the diagram (\ref{eq-red-diag}) is connected. By Theorem \ref{Thm-Norm}, $\mathcal{M}(\mathbf{q},\mathbf{d})$ is irreducible.

Let $\mathbf{d}=\sum_{s=1}^r\mathbf{d}^{(s)}$ be any decomposition of $\mathbf{d}$ into vectors $\mathbf{d}^{(s)}\in R^+_{\mathbf{q}}$. We have 
\begin{equation}\label{eq-Lem-dim-Aff-infty}
p(\mathbf{d})\ge \sum_{s=1}^rp(\mathbf{d}^{(s)}).
\end{equation}
This is in fact the defining property of flat roots in \cite{ST}, and it is a part of the statement of \cite[Theorem 6.16]{ST}. (The proof of this theorem is based on \cite{Su}, and what we actually need here is \cite[Proposition 4.2]{Su}.)
By \cite[Lemma 7.1]{CBS}, the locus of semi-simple representations of type $(k_i,\mathbf{d}^{(i)})_{1\le i\le r}$, if nonempty, has dimension $\sum_{i=1}^r2p(\mathbf{d}^{(i)})$. It follows from the inequality (\ref{eq-Lem-dim-Aff-infty}) that every representation type stratum has dimension at most $2p(\mathbf{d})$. The locus consisting of direct sums of distinct $\bs\delta$-dimensional simple representations has dimension 2m and is nonempty in view of Crawley-Boevey-Shaw's theorem \cite[Theorem 1.1]{CBS}. Now, the irreducibility of $\mathcal{M}(\mathbf{q},\mathbf{d})$ implies that there is only one stratum of dimension $2m$. Therefore, we have $\dim\mathcal{M}(\mathbf{q},\mathbf{d})=2p(\mathbf{d})=2m$. Since the locus of simple representations also has dimension $2p(\mathbf{d})=2m$ if it is nonempty, we conclude that $\mathbf{d}$-dimensional simple representations do not exist.

\section{The reduction steps}\label{Sec-Reduction}

This section contains the details of the reduction steps in \S \ref{subsec-reduction}.

\subsection{Combinatorial data}\label{subsec-Comb-data}\hfill

We begin by introducing some combinatorial data that will be used to describe conjugacy classes, parabolic structures and quivers in the rest of this article.

Let $k$ be a positive integer, and let $\underline{\nu}:=(\nu_j)_{1\le k\le j}\in\mathbb{Z}_{\ge0}^k$ be a tuple of integers. Define
$$
\bs\tau(\underline{\nu}):=\{[j,i]|1\le j\le k,~0\le i\le \nu_j\}
$$
where $[j,i]$ means a pair consisting of integers $j$ and $i$ (instead of an interval); we will call $\bs\tau(\underline{\nu})$ a type, which will be associated to a filtered structure or a parabolic structure. An element of $\mathbb{Z}_{\ge0}^{\bs\tau(\underline{\nu})}$ will be written as $\mathbf{d}=(d_{[j,i]})_{[j,i]\in\bs\tau(\underline{\nu})}$ with $d_{[j,i]}\in\mathbb{Z}_{\ge0}$, and similarly $\mathbf{q}=(q_{[j,i]})_{[j,i]\in\bs\tau(\underline{\nu})}\in(\mathbb{C}^{\ast})^{\bs\tau(\underline{\nu})}$. For $\mathbf{d}\in\mathbb{Z}_{\ge0}^{\bs\tau(\underline{\nu})}$, it will be convenient to introduce the associated vector
\begin{equation}\label{eq-asso-d}
\mathbf{d}^{\ast}=(d_{[j,i]}^{\ast})_{[j,i]\in\bs\tau(\underline{\nu})}
\end{equation}
with $d_{[j,i]}^{\ast}=d_{[j,i]}-d_{[d,i+1]}$ for all $[j,i]$. Following Crawley-Boevey, we say that $\mathbf{d}$ is strict if every component of $\mathbf{d}^{\ast}$ is nonnegative. We define a star-shaped quiver $Q=(Q_0,Q_1)$ from $\bs\tau(\underline{\nu})$ in the following way. Let $Q'_0=\bs\tau(\underline{\nu})$ and let $Q'_1$ be the set of arrows $a_{[j,i]}:[j,i]\rightarrow[j,i+1]$ for $[j,i]\in\bs\tau(\underline{\nu})$ and $i\ne\nu_j$. We define $Q_0$ as $Q'_0$ modulo the relation $[j,0]\sim[j',0]$ for any $1\le j\le j'\le k$ and define $Q_1$ as the induced set of arrows among the elements of $Q_0$. We will say that $\mathbf{d}$ has rank $n$ if $d_{[j,0]}=n$ for all $j$ and it is strict; we may regard such a $\mathbf{d}$ as an element of $\mathbb{Z}_{\ge0}^{Q_0}$. Conversely, any strict $\mathbf{d}\in\mathbb{Z}_{\ge0}^{Q_0}$ can be regarded as a rank $n$ vector in $\mathbb{Z}_{\ge0}^{\bs\tau(\underline{\nu})}$.

We will need to consider an operation on flags, called degeneration, which send
$$
\mathbb{C}^n=E_0\supset E_1\supset\cdots\supset E_{\nu}\supset E_{\nu+1}=0
$$ 
to
$$
E_0=E_{i_0}\supset E_{i_1}\supset\cdots\supset E_{i_s}\supset E_{i_s+1}=0
$$
where $\{i_1,\ldots,i_s\}$ is a subset of $\{1,2,\ldots,\nu\}$. The following notations are introduced for this purpose. Consider a tuple of integers $\underline{\mu}=(\mu_j)_{1\le j \le k}\in\mathbb{Z}_{\ge0}^k$ with $\mu_j\le \nu_j$ for every $j$. A degeneration of types is a map
$$
\sigma:\bs\tau(\underline{\mu})\longrightarrow\bs\tau(\underline{\nu})
$$ 
that preserves the $j$-component while being increasing in the $i$-component and satisfies $\sigma([j,0])=[j,0]$ for every $j$. By abuse of notation, we may write $\sigma([j,i])=[j,\sigma(i)]$. For any $\mathbf{d}\in\mathbb{Z}_{\ge0}^{\bs\tau(\underline{\nu})}$, the value of $\sigma^{\ast}\mathbf{d}$ at $[j,i]$ is equal to $d_{[j,\sigma(i)]}$; this should be thought of as the dimension vector of a degenerated flag.

\subsection{Filtered local systems}\label{subsec-FLS}\hfill

Let $\bar{C}$ be a compact Riemann surface of genus $g\ge0$ and let $S=\{p_1,\ldots,p_k\}$ be a set of points in $\bar{C}$. Write $C=\bar{C}\setminus S$. For any $j\in\{1,\ldots,k\}$, choose a simply connected analytic open neighbourhood $U_j$ of $p_j$, and write $U_j^{\ast}=U_j\setminus\{p_j\}$; the choice of such opens will not matter. Let $L$ be a local system on $C$. A filtered structure on $L$ is a tuple $(L_{[j,\bullet]})_{1\le j\le k}$, where each $L_{[j,\bullet]}$ is a strictly decreasing filtration of $L|_{U_j^{\ast}}$:
$$
L|_{U_j^{\ast}}=L_{[j,0]}\supset L_{[j,1]}\supset \cdots\supset L_{[j,\nu_j]}\supset L_{[j,\nu_j+1]}=0.
$$
A local system on $C$ equipped with a filtered structure is called a filtered local system, denoted by $\mathscr{L}=(L,L_{[\bullet,\bullet]})$. We will call $\bs\tau(\underline{\nu})$ its type. The dimension vector of this filtered structure is a tuple of integers $\mathbf{d}\in\mathbb{Z}_{\ge0}^{\bs\tau(\underline{\nu})}$ with $d_{[j,i]}=\rk L_{[j,i]}$. A weight for such a filtered structure (or such a filtered local system) is a tuple of real numbers $\bs\beta\in\mathbb{R}^{\bs\tau(\underline{\nu})}$ satisfying
$$
\beta_{[j,0]}<\beta_{[j,1]}<\cdots<\beta_{[j,\nu_j]}
$$
for all $j$. The weight allows us to define the degree of filtered local systems:
$$
\deg_{\bs\beta}\mathscr{L}:=\sum_{j=1}^k\sum_{i=0}^{\nu_j}\beta_{[j,i]}\dim L_{[j,i]}/L_{[j,i+1]}.
$$ 
Any local subsystem $M\subset L$ admits an induced filtered structure in the following manner.  For any $j$, the induced filtration $M_{[j,\bullet]}$ consists of the distinct vector spaces among $M_{p_j}\cap L_{[j,\bullet]}$; if $\mu_j$ denotes the number of such distinct spaces that are nonzero, then the induced filtered local subsystem has type $\bs\tau(\underline{\mu})$. For each $[j,i_1]\in\bs\tau(\underline{\mu})$, if $[j,i_2]\in\bs\tau(\underline{\nu})$ is such that $L_{[j,i_2]}\supset M_{[j,i_1]}$ and $L_{[j,i_2+1]}\nsupset M_{[j,i_1]}$, then we assign the weight $\beta_{[j,i_2]}$ to $M_{[j,i_1]}$. This defines the weight for $\mathscr{M}:=(M,M_{[\bullet,\bullet]})$, an element of $\mathbb{R}^{\bs\tau(\underline{\mu})}$, which by abuse of notation we again denote by $\bs\beta$. A filtered local system $\mathscr{L}$ of degree zero is $\bs\beta$-stable (resp. $\bs\beta$-semi-stable) if for any local subsystem $M\subset L$, we have
$$
\deg_{\bs\beta}\mathscr{M}<0 \text{ (resp. $\le0$).}
$$
According to Yamakawa \cite[Theorem 1.2]{Yama} and Huang-Sun \cite[Theorem 1.1]{HS}, for 
\begin{itemize}
\item any positive intergers $n$ and $k$,
\item any subset $S\subset\bar{C}$ consisting of $k$ points, 
\item any type of filtration $\bs\tau(\underline{\nu})$, dimension $\mathbf{d}$ of rank $n$ with $d_{[j,i]}>d_{[j,i+1]}$ for any $[j,i]$,
\item any weight $\bs\beta$ of type $\bs\tau(\underline{\nu})$, and
\item any tuple of complex number $\bs\xi\in(\mathbb{C}^{\ast})^{\bs\tau(\underline{\nu})}$ satisfying 
$$
\bs\xi^{\mathbf{d}}:=\prod_{1\le i\le k}(\xi_{[j,0]})^n\prod_{\substack{1\le j\le k\\1\le i\le \nu_j}}(\xi_{[j,i]}\xi_{[j,i-1]}^{-1})^{d_{[j,i]}}=1,
$$
\end{itemize}
there exists a coarse moduli space $$\mathcal{M}_B(S,\mathbf{d},\bs\beta,\bs\xi)$$ parametrising $\bs\beta$-polystable filtered local system of degree zero on $C$ with the given filtration type $\bs\tau(\underline{\nu})$ and dimension $\mathbf{d}$, whose monodromy at $p_j$ induces the scalar matrix $\xi_{[j,i]}$ on each graded local system $L_{[j,i]}/L_{[j,j+1]}$. We will often write $\mathcal{M}_B(\mathbf{d},\bs\beta,\bs\xi)=\mathcal{M}_B(S,\mathbf{d},\bs\beta,\bs\xi)$.

\subsection{Parabolic Higgs bundles}\label{subsec-PHB}\hfill

Now we regard $\bar{C}$ as a smooth projective algebraic curve. Let $S=\{p_1,\ldots,p_k\}\subset \bar{C}$ be as above, regarded as a reduced divisor. Denote by $\Omega$ the canonial bundle of $\bar{C}$. A meromorphic Higgs bundle of rank $n$ on $\bar{C}$ is a pair $(E,\Phi)$, where $E$ is a vector bundle of rank $n$ on $\bar{C}$ and $\Phi:E\rightarrow E\otimes \Omega(S)$ is a homomorphism of coherent sheaves. For each $1\le j\le k$, we will denote by $\Phi_j$ the residue of the Higgs field $\Phi$ at $p_j$. A parabolic structure on $(E,\Phi)$ is the data of a flag $E_{[j,\bullet]}$ of the fibre $E_{p_j}$ that is preserved by $\Phi_j$ for each $1\le j\le k$. A meromorphic Higgs bundle equipped with a parabolic structure is called a parabolic Higgs bundle, denoted by $\mathscr{E}=(E,E_{[\bullet,\bullet]},\Phi)$. If the parabolic sturcture on $(E,\Phi)$ is of the form
$$
E_{p_j}=E_{[j,0]}\supset E_{[j,1]}\supset\cdots\supset E_{[j,\nu_j]}\supset E_{[j,\nu_j+1]}=0
$$ 
for each $j$, then we say that it is of type $\bs\tau(\underline{\nu})$. The dimension vector of this parabolic structure is a tuple of integers $\mathbf{d}\in\mathbb{Z}_{\ge0}^{\bs\tau(\underline{\nu})}$ with $d_{[j,i]}=\rk E_{[j,i]}$. A parabolic weight (or simply weight) for such a parabolic Higgs bundle is a collection of real number $\bs\alpha\in\mathbb{R}^{\bs\tau(\underline{\nu})}$ satisfying
$$
0\le \alpha_{[j,0]}<\alpha_{[j,1]}<\cdots<\alpha_{[j,\nu_j]}<1
$$
for each $j$. Any vector subbundle $F\subset E$ that is preserved by $\Phi$ admits an induced parabolic structure in the following manner.  For any $j$, the induced filtration $F_{[j,\bullet]}$ consists of the distinct vector spaces among $F_{p_j}\cap E_{[j,\bullet]}$; if $\mu_j$ denotes the number of such distinct spaces that are nonzero, then the induced parabolic Higgs bundle has type $\bs\tau(\underline{\mu})$. For each $[j,i_1]\in\bs\tau(\underline{\mu})$, if $[j,i_2]\in\bs\tau(\underline{\nu})$ is such that $E_{[j,i_2]}\supset F_{[j,i_1]}$ and $E_{[j,i_2+1]}\nsupset F_{[j,i_1]}$, then we assign the weight $\alpha_{[j,i_2]}$ to $F_{[j,i_1]}$. This defines the weight for $\mathscr{F}:=(F,F_{[\bullet,\bullet]},\Phi)$, an element of $\mathbb{R}^{\bs\tau(\underline{\mu})}$, which by abuse of notation we again denote by $\bs\alpha$.

The parabolic degree of $\mathscr{E}$ with weight $\bs\alpha$ is defined to be
$$
\deg_{\bs\alpha}\mathscr{E}:=\deg E+\sum_{j=1}^k\sum_{i=0}^{\nu_j}\alpha_{[j,i]}\dim E_{[j,i]}/E_{[j,i+1]}.
$$
We say that a parabolic Higgs bundle $\mathscr{E}$ of degree zero is $\bs\alpha$-stable (resp. $\bs\alpha$-semi-stable) if for any vector subbundle $F\subset E$, we have
$$
\deg_{\bs\alpha}\mathscr{F}<0\text{ (resp. $\le0$)}.
$$
According to \cite[Corollary 7.2]{HKSZ}, for any
\begin{itemize}
\item any $n$, $k$, $S$, $\mathbf{d}$ and $\bs\tau(\underline{\nu})$ as in \S \ref{subsec-FLS},
\item any $\bs\alpha\in(\mathbb{R}_{\ge0}\cap\mathbb{R}_{<1})^{\bs\tau(\underline{\nu})}$, and
\item any tuple $(\mathcal{O}_j)_{1\le j\le k}$ of semi-simple adjoint orbits in the Lie algebras $\bigoplus_{i=0}^{\nu_j}\gl(E_{[j,i]}/E_{[j,i+1]})$, each regarded as the Levi quotient of the parabolic subalgebra defined by the parabolic structure on $E_{p_j}$,
\end{itemize}
there exists a coarse moduli space
$$
\mathcal{M}_{Dol}(S,\mathbf{d},\bs\alpha,(\mathcal{O}_j)_j)
$$
parametrising $\bs\alpha$-polystable parabolic Higgs bundles of degree zero on $\bar{C}$ with the given filtration type $\bs\tau(\underline{\nu})$ and dimension $\mathbf{d}$, whose Higgs fields have residues at $p_j$'s lying in the given orbits $\mathcal{O}_j$ after passing to the Levi quotients. We will often write $\mathcal{M}_{Dol}(\mathbf{d},\bs\alpha,(\mathcal{O}_j)_j)=\mathcal{M}_{Dol}(S,\mathbf{d},\bs\alpha,(\mathcal{O}_j)_j)$.

\subsection{Nonabelian Hodge theory}\label{subsec-NH}\hfill

The nonabelian Hodge correspondence between the moduli space of filtered local systems and the moduli space of parabolic Higgs bundles was established by Simpson in \cite{Si90}, where he proved a bijection between the stable objects on each side. It is possible to extend this bijection to a homeomorphism between the full moduli spaces. For the following theorem, see Biquard-Garc\'ia-Prada-Mundet i Riera \cite[Theorem 7.10]{BGM} (the published version) or Huang-Sun's recent preprint \cite[Theorem 5.3]{HS}. Note that what is usually called the nonabelian Hodge correspondence is a map between the Dolbeault moduli space and the de Rham moduli space, and in the theorem below we have tacitly added to it the Riemann-Hilbert correspondence. As is pointed out to me by Boalch, the Riemann-Hilbert is nontrivial for all the noncompact generalisations. To simplify notations and terminologies, we will nevertheless hide the Riemann-Hilbert in the nonabelian Hodge. For the actual nonabelian Hodge correspondence, it is known since the earlier work of Biquard-Boalch \cite[Theorem 6.1]{BB} that it induces a diffeomorphism between the stable loci of moduli spaces. Some special cases of this diffeomorphism were also proved by Konno \cite{Kon} and Nakajima \cite{Naka}.  
\begin{Thm}\label{Thm-NHT}
The nonabelian Hodge correspondence induces a homeomorphism of topological spaces
$$
\mathcal{M}_B(\mathbf{d}^{(1)},\bs\beta,\bs\xi)\lisom\mathcal{M}_{Dol}(\mathbf{d}^{(2)},\bs\alpha,(\mathcal{O}_j)_j),
$$
for suitable $\mathbf{d}^{(1)}$, $\bs\beta$, $\bs\xi$, $\mathbf{d}^{(2)}$, $\bs\alpha$ and $(\mathcal{O}_j)_j$.
\end{Thm}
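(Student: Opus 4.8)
The plan is to read Theorem~\ref{Thm-NHT} as a repackaging, in the filtered and parabolic language of \S\ref{subsec-FLS} and \S\ref{subsec-PHB}, of the tame nonabelian Hodge correspondence on the open curve $C=\bar C\setminus S$ composed with the Riemann--Hilbert correspondence. The analytic heart of the statement is already available in the literature: the theory of tame harmonic bundles produces a homeomorphism of the full (polystable) moduli spaces, for which I would cite \cite[Theorem~7.10]{BGM} and \cite[Theorem~5.3]{HS}, while \cite[Theorem~6.1]{BB} records that it restricts to a diffeomorphism of the stable loci. What remains to be done, and what the word ``suitable'' in the statement encodes, is to pin down the dictionary between the two packages of combinatorial data so that the objects being matched are exactly those parametrised by $\mathcal M_B(\mathbf d^{(1)},\bs\beta,\bs\xi)$ and by $\mathcal M_{Dol}(\mathbf d^{(2)},\bs\alpha,(\mathcal O_j)_j)$.

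First I would set up the dictionary on the numerical invariants. The two filtration types are identified, possibly after a degeneration $\sigma$ as in \S\ref{subsec-Comb-data}, so that $\mathbf d^{(1)}$ and $\mathbf d^{(2)}$ record the same graded dimensions; the graded pieces of the filtered local system at $p_j$ correspond to the graded pieces of the parabolic Higgs bundle at $p_j$. On the graded piece indexed by $[j,i]$ the monodromy acts by the scalar $\xi_{[j,i]}$, while the residue $\Phi_j$ of the Higgs field acts by a complex scalar $c_{[j,i]}$, and the eigenvalues $c_{[j,i]}$ assemble into the semi-simple orbit $\mathcal O_j$ (possibly noncentral in the relevant Levi when several fine pieces are merged by $\sigma$). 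The tame nonabelian Hodge dictionary then reads, schematically,
$$
\xi_{[j,i]}=\exp\bigl(-2\pi\sqrt{-1}\,(\alpha_{[j,i]}+\sqrt{-1}\,c_{[j,i]})\bigr),
$$
with $\alpha_{[j,i]}\in[0,1)$, and the monotonicity $0\le\alpha_{[j,0]}<\cdots<\alpha_{[j,\nu_j]}<1$ matched with $\beta_{[j,0]}<\cdots<\beta_{[j,\nu_j]}$ determines the remaining weight data on both sides. Under this dictionary the normalisation $\bs\xi^{\mathbf d}=1$ translates into the parabolic degree of the Higgs bundles being zero (together with the trace condition on the residue eigenvalues), which is exactly the degree-zero convention imposed in \S\ref{subsec-PHB}.

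Next I would verify that the two stability notions correspond object by object. A local subsystem $M\subset L$ and a $\Phi$-invariant subbundle $F\subset E$ are matched by the harmonic-bundle construction, and the induced filtered and parabolic structures are defined in \S\ref{subsec-FLS} and \S\ref{subsec-PHB} by the identical rule---taking the distinct intersections with the ambient filtration and assigning to each the weight of the smallest ambient step containing it. Since $\deg_{\bs\alpha}\mathscr F$ and $\deg_{\bs\beta}\mathscr M$ are computed from the same graded dimensions weighted respectively by $\bs\alpha$ and $\bs\beta$, the slope inequality defining $\bs\beta$-stability becomes the one defining $\bs\alpha$-stability; the same holds for semistability, and hence for the polystable points that constitute the two moduli spaces. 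Feeding this matching into the cited analytic results yields the asserted homeomorphism.

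I expect the genuine difficulty to be bookkeeping rather than geometry. The several references normalise their conventions differently---the placement of the factor $-2\pi\sqrt{-1}$, whether weights are taken in $[0,1)$ or shifted, and the precise sign in the degree-zero condition---and the work is to align all of them so that the moduli spaces of this paper are \emph{literally} the ones appearing in \cite{BGM}, \cite{HS} and \cite{BB}. A second, already-flagged subtlety is that these references deliver a flat bundle, so one must postcompose with Riemann--Hilbert to land in the Betti moduli space of filtered local systems; checking that Riemann--Hilbert respects the filtered structure, the scalar-on-graded monodromy condition, and the stability is the one place where a short but genuine verification is required.
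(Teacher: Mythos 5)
Your proposal follows the same route as the paper itself: Theorem \ref{Thm-NHT} is not proved from scratch but imported from the literature, citing \cite[Theorem 7.10]{BGM} and \cite[Theorem 5.3]{HS} for the homeomorphism of the full moduli spaces and \cite[Theorem 6.1]{BB} for the diffeomorphism on stable loci, with the Riemann--Hilbert correspondence composed in tacitly; the only substantive content left to supply is the dictionary between the two packages of invariants. The problem is that the dictionary you write down is wrong, and the dictionary is precisely what the paper extracts from this theorem for later use. The correct tame correspondence (the paper's table, checkable on the rank-one local model with metric $|z|^{2\alpha}$ and Higgs field $\lambda\,dz/z$, $\lambda=b+ci$, whose associated flat connection is $d+(\alpha+\lambda)\frac{dz}{z}+\bar{\lambda}\frac{d\bar{z}}{\bar{z}}$) sends $(\alpha,\lambda=b+ci)$ to Betti weight $\beta=-2b$ and monodromy eigenvalue $\xi=\exp(-2\pi i\alpha+4\pi c)$: the argument of $\xi$ sees only the parabolic weight, the modulus of $\xi$ sees only $\mathrm{Im}\,\lambda$, and the Betti weight sees only $\mathrm{Re}\,\lambda$. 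Your formula $\xi_{[j,i]}=\exp\bigl(-2\pi i(\alpha_{[j,i]}+i\,c_{[j,i]})\bigr)$, with $c_{[j,i]}$ the full complex residue eigenvalue, unpacks to $|\xi_{[j,i]}|=e^{2\pi\,\mathrm{Re}\,c_{[j,i]}}$ and $\arg\xi_{[j,i]}=-2\pi\alpha_{[j,i]}+2\pi\,\mathrm{Im}\,c_{[j,i]}$; it attaches the modulus to the wrong quantity, pollutes the argument with the Higgs eigenvalue, and is off by a factor of two in any case, since the relevant de Rham residue is $\alpha+\lambda-\bar{\lambda}=\alpha+2ic$ rather than $\alpha+ic$. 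Your further claim that the remaining weight data is ``determined by monotonicity'' is also false: $\bs\beta$ is pinned by the real parts of the Higgs residue eigenvalues via $\beta=-2b$, and this cannot be recovered from ordering information.

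This is not a removable convention discrepancy, because the structure of the dictionary is what the downstream arguments use. Lemma \ref{Lem-Betti-Dol-indiv} and Lemma \ref{Lem-almost-Dol-to-Betti} both rest on the identity that the parabolic weights $\alpha_{[j,i]}$ are exactly the numbers $a$ such that $-2\pi a$ occurs as the argument of some Betti eigenvalue, and the whole point of choosing $\bs\alpha$ (almost) generic before applying $\mathbf{NH}_2$ is that this forces the eigenvalues $\bs\xi'$ to be (almost) generic; with your version the arguments also receive contributions from $\mathrm{Im}\,c_{[j,i]}$, and these deductions collapse. No choice of sign or normalisation repairs this, since in any convention the modulus of the monodromy comes from $\lambda-\bar{\lambda}$ (purely imaginary part) while the real weight $\alpha$ can only contribute to the argument. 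A secondary point: your stability-matching paragraph argues the two slope inequalities agree ``since they are computed from the same graded dimensions'', but $\deg_{\bs\alpha}\mathscr{F}$ contains the term $\deg F$, which has no counterpart in $\deg_{\bs\beta}\mathscr{M}$; the compatibility of stability under the correspondence is part of the cited theorems and should simply be quoted, not rederived by this argument.
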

For our purpose, it is crucial to make precise the relation between the invariants on each side of of the homeomorphism. Recall that $\mathbf{d}^{(1)}$ and $\mathbf{d}^{(2)}$ are the dimension vectors of the filtered and parabolic structures respectively; their relation can be read from the transformation rule for other invariants:

\begin{equation}
\begin{tabular}{|c|c|c|}
\hline
 & Dolbeault & Betti \\
\hline
weight & $\alpha$ & $\beta=-2b$ \\
\hline
eigenvalue & $b+ci$ & $\exp(-2\pi i\alpha+4\pi c)$\\
\hline
\end{tabular}
\end{equation}

In this table, we let $\alpha$ and $\beta$ denote a respective component of $\bs\alpha$ and $\bs\beta$. Let us explain how to read this table. Suppose that we have a filtered local system $(L,L_{[j,\bullet]})$, and that at some puncture $p_j$, the graded monodromy of $L$ is given by a tuple of scalar matrices:
$$
(\xi_{[j,i]}\Id)_{0\le i\le \nu_j}\in\bigoplus_{i=0}^{\nu_j^{(1)}} \gl(L_{[j,i]}/L_{[j,i+1]}).
$$
In other words, for any $[j,i]$, there is a scalar matrix $\xi\Id:=\xi_{[j,i]}\Id$ of size $d_{[j,i]}-d_{[j,i+1]}$ and weight $\beta=\beta_{[j,i]}$. We may write $\beta=-2b$ and $\xi=\exp(-2\pi i\alpha+4\pi c)$ for some real numbers $0\le\alpha<1$, $b$ and $c$; these numbers are the inputs on the Betti side of the above table. Then, the rule of transformation says that the corresponding parabolic Higgs bundle will have the following form: if we write the graded of the residue of the Higgs field at $p_j$ as
$$
(\Phi_{[j,i]})_{0\le i\le \nu_j^{(2)}}\in\bigoplus_{i=0}^{
\nu_j^{(2)}}\gl(E_{[j,i]}/E_{[j,j+1]}),
$$
then, 
\begin{itemize}
\item[(1)] the set of weights $\{\alpha_{[j,i]}\}_i$ for the flag $E_{[j,\bullet]}$ consists of those $\alpha$ such that $-2\pi\alpha$ is the argument of some eigenvalue $\xi$,
\item[(2)] for a subquotient $E_{[j,i]}/E_{[j,j+1]}$ of weight $\alpha$, the dimension $\dim E_{[j,i]}/E_{[j,j+1]}$ is computed by collecting all scalar matrices $\xi\Id$ on the Betti side with the argument of $\xi$ equal to $-2\pi\alpha$ and then summing up the sizes of these matrices, 
\item[(3)] each component $\Phi_{[j,i]}$ is semi-simple, and
\item[(4)] whenever a matrix $\xi\Id$ of weight $\beta$ appears on the Betti side, the component $\Phi_{[j,i]}$ of weight $\alpha$ has an eigenvalue $(b+ci)$ with multiplicity being the size of the matrix $\xi\Id$. 
\end{itemize}

In the diagram (\ref{eq-red-diag}), there were two arrows defined by nonabelian Hodge theory:
\begin{itemize}
\item[(1)] $\mathbf{NH}_1$ is precisely the correspondence described above. Note that in general $\nu_j^{(2)}\le\nu_j^{(1)}$ and the strict inequality occurs for some $j$ precisely when some component $\Phi_{[j,i]}$ is not a scalar matrix.
\item[(2)] $\mathbf{NH}_2$ is similar, but the Dolbeault side of this correspondence, which we will explain in the next subsection, is defined in such a way that each component $\Phi_{[j,i]}$ is a scalar matrix.
\end{itemize}

\begin{Lem}\label{Lem-Betti-Dol-indiv}
Consider the morphisms
$$
\mathcal{M}_{\bs\theta}(\mathbf{q},\mathbf{d})\stackrel{\mathbf{Iso}_2}{\longrightarrow}\mathcal{M}_B(\mathbf{d},\bs\beta,\bs\xi)\stackrel{\mathbf{NH}_1}{\longrightarrow}\mathcal{M}_{Dol}(\mathbf{d}',\bs\alpha',(\mathcal{O}'_j)_j),
$$
where the objects on the Betti and Dolbeault side of $\mathbf{NH}_1$ have types $\bs\tau(\underline{\nu})$ and $\bs\tau(\underline{\mu})$ respectively. Suppose that $\mathbf{d}=m\bs\gamma$ for some indivisible $\bs\gamma$ and $\mathbf{q}^{\bs\gamma}$ is an $l$-th primitive root of unity. Let $e\in\mathbb{Z}$ be such that 
$$
e+\sum_{[j,i]\in\bs\tau(\underline{\mu})} \alpha'_{[j,i]}d^{\prime\ast}_{[j,i]}=0.
$$
Then, the vector $(e,\mathbf{d})$ is indivisible if and only if $l=m$.
\end{Lem}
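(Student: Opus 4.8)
The plan is to compute the root of unity $\mathbf{q}^{\bs\gamma}=\zeta$ explicitly in terms of the integer $e$ and the number $m$, and then simply read off its order. The outcome will be the clean identity $\gcd(e,m)=m/l$, from which the stated equivalence is immediate. First I would reduce the divisibility question to one about $e$ and $m$ alone: since $\bs\gamma$ is indivisible, the greatest common divisor of the entries of $\mathbf{d}=m\bs\gamma$ is $m$, so the gcd of all entries of $(e,\mathbf{d})$ equals $\gcd(e,m)$. Thus $(e,\mathbf{d})$ is indivisible precisely when $\gcd(e,m)=1$.

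Next I would express $\mathbf{q}^{\bs\gamma}$ through the eigenvalues $\bs\xi$. Unwinding the reconstruction relations $q_{[j,i]}=\xi_{[j,i]}\xi_{[j,i-1]}^{-1}$ (for $i\ge1$) and $q_{\star}=\prod_j\xi_{[j,0]}$ from \S\ref{subsec-Qui-Char}, a telescoping computation yields
$$
\mathbf{q}^{\bs\gamma}=\prod_{[j,i]\in\bs\tau(\underline{\nu})}\xi_{[j,i]}^{\gamma^{\ast}_{[j,i]}},
$$
where $\bs\gamma^{\ast}$ is the associated vector of (\ref{eq-asso-d}). I then bring in the nonabelian Hodge dictionary: writing each $\xi_{[j,i]}=\exp(-2\pi i\,\alpha_{[j,i]}+4\pi c_{[j,i]})$ as in the transformation table, the product becomes $\exp\!\big(-2\pi i\sum\alpha_{[j,i]}\gamma^{\ast}_{[j,i]}+4\pi\sum c_{[j,i]}\gamma^{\ast}_{[j,i]}\big)$. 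Because $\zeta$ is a root of unity, its modulus is $1$, which forces the real part $4\pi\sum c_{[j,i]}\gamma^{\ast}_{[j,i]}$ of the exponent to vanish and leaves $\zeta=\exp\!\big(-2\pi i\sum_{\bs\tau(\underline{\nu})}\alpha_{[j,i]}\gamma^{\ast}_{[j,i]}\big)$.

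The third step identifies this phase with $e/m$. Under $\mathbf{NH}_1$ the parabolic structure of type $\bs\tau(\underline{\mu})$ is obtained from the type $\bs\tau(\underline{\nu})$ data by grouping the graded pieces that share a common weight $\alpha$ (equivalently, a common argument $\arg\xi$), the shared weight being exactly the one read off from that argument; hence $\sum_{\bs\tau(\underline{\mu})}\alpha'_{[j,i]}d^{\prime\ast}_{[j,i]}=\sum_{\bs\tau(\underline{\nu})}\alpha_{[j,i]}d^{\ast}_{[j,i]}$. Since $\mathbf{d}=m\bs\gamma$ gives $d^{\ast}_{[j,i]}=m\gamma^{\ast}_{[j,i]}$, the defining relation for $e$ becomes $e=-m\sum_{\bs\tau(\underline{\nu})}\alpha_{[j,i]}\gamma^{\ast}_{[j,i]}$, i.e. $\sum\alpha\gamma^{\ast}=-e/m$. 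Substituting into the previous display gives $\zeta=\exp(2\pi i\,e/m)$.

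Finally, the order of $\exp(2\pi i\,e/m)$ is $m/\gcd(e,m)$, and since $\zeta$ is a primitive $l$-th root of unity this forces $\gcd(e,m)=m/l$. Therefore $(e,\mathbf{d})$ is indivisible iff $\gcd(e,m)=1$ iff $l=m$, which is the claim. The step requiring the most care is the bookkeeping across the type change $\bs\tau(\underline{\nu})\to\bs\tau(\underline{\mu})$ under $\mathbf{NH}_1$: one must verify that merging graded pieces of equal argument preserves the weighted sum $\sum\alpha\,d^{\ast}$ (the weights agreeing precisely because they are determined by the common value of $\arg\xi$), and that the modulus contributions $\sum c\,\gamma^{\ast}$ genuinely cancel, which is exactly the content of $\zeta$ being a root of unity.
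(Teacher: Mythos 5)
Your proof is correct and is essentially the paper's own argument: both rest on the telescoping identity $\mathbf{q}^{\boldsymbol\gamma}=\prod_{[j,i]}\xi_{[j,i]}^{\gamma^{\ast}_{[j,i]}}$, the observation that a root of unity has modulus one (killing the $\sum c\,\gamma^{\ast}$ term), and the conservation of the weighted sum $\sum\alpha\,d^{\ast}$ across the type change $\bs\tau(\underline{\mu})\rightarrow\bs\tau(\underline{\nu})$ under $\mathbf{NH}_1$. The only difference is packaging: you derive the closed formula $\zeta=\exp(2\pi i\,e/m)$, hence $\gcd(e,m)=m/l$, which subsumes in a single identity the two separate implications that the paper proves one at a time.
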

\begin{Rem}\label{Rem-Betti-Dol-indiv}
(1). Beware that the statement is not about $(e,\mathbf{d}')$. (2). The conclusion applies equally to the morphisms $\mathbf{Iso}_3$ and $\mathbf{NH}_2$, in which case $(e,\mathbf{d})$ is indivisible if and only if $(e,\tilde{\mathbf{d}})$ indivisible. See Remark \ref{Rem-d=tilde-d} (2) below.
\end{Rem}
\begin{proof}
Suppose that $l<m$ and write $m=lm'$. Note that we have tacitly identified the strict $\mathbf{d}\in\mathbb{Z}_{\ge0}^{Q_0}$ with a rank $n$ vector in $\mathbb{Z}_{\ge0}^{\bs\tau(\underline{\nu})}$. Obviously, $\mathbf{d}$ is divisible by $m'$. We have
$$
1=\mathbf{q}^{l\bs\gamma}=\prod_{[j,i]\in\bs\tau(\underline{\nu})}\xi_{[j,i]}^{l\gamma^{\ast}_{[j,i]}}.
$$
If the argument of $\xi_{[j,i]}$ is $-2\pi ia_{[j,i]}$, then $\sum_{[j,i]} a_{[j,i]}l\gamma^{\ast}_{[j,i]}$ is an interger. Under the nonabelian Hodge correspondence, we have 
$$
m'\sum_{[j,i]\in\bs\tau(\underline{\nu})} a_{[j,i]}l\gamma^{\ast}_{[j,i]}=\sum_{[j,i]\in\bs\tau(\underline{\nu})} a_{[j,i]}d^{\ast}_{[j,i]}=\sum_{[j,i]\in\bs\tau(\underline{\mu})} \alpha'_{[j,i]}d^{\prime\ast}_{[j,i]},
$$ 
since each summand $\alpha'_{[j,i]}d^{\prime\ast}_{[j,i]}$ is a sum of those $a_{[j,i']}d^{\ast}_{[j,i']}$ with $a_{[j,i']}=\alpha'_{[j,i]}$. It follows that $e$ is divisible by $m'$, and thus $(e,\mathbf{d})$ is divisible. Conversely, suppose that $(e,\mathbf{d})=m'(e',l\bs\gamma)$ for some $m'>1$. Then, $\sum_{[j,i]} a_{[j,i]}l\gamma^{\ast}_{[j,i]}$ is an integer, which implies $\mathbf{q}^{l\bs\gamma}=1$.
\end{proof}

\subsection{Variation of parabolic weights}\label{subsec-VPW}\hfill

Variation of parabolic weights alters the moduli spaces of parabolic Higgs bundles. This operation has been previously studied by Boden-Hu \cite{BH} (without Higgs fields) and Thaddeus \cite{T02} as a special case of variation of stability conditions, and the focus there was placed on the topology and geometry of the moduli spaces. In our context, it is a tool for reducing the Deligne-Simpson problem to easier cases.

We will define a morphism between two moduli spaces of parabolic Higgs bundles:
$$
\mathbf{Var}_2:\mathcal{M}_{Dol}(\tilde{\mathbf{d}},\bs\alpha,(\mathcal{O}_j)_j)\longrightarrow\mathcal{M}_{Dol}(\mathbf{d}',\bs\alpha',(\mathcal{O}'_j)_j).
$$
The types of $\mathcal{M}_{Dol}(\tilde{\mathbf{d}},\bs\alpha,(\mathcal{O}_j)_j)$ and $\mathcal{M}_{Dol}(\mathbf{d}',\bs\alpha',(\mathcal{O}'_j)_j)$ are $\bs\tau(\underline{\nu})$ and $\bs\tau(\underline{\mu})$ respectively, with each $\mu_j\le\nu_j$. We require that the following conditions are satisfied: 
\begin{itemize}
\item There is a degeneration of types $\sigma:\bs\tau(\underline{\mu})\longrightarrow\bs\tau(\underline{\nu})$ such that $\mathbf{d}'=\sigma^{\ast}\tilde{\mathbf{d}}$.
\item The graded residue $\mathcal{O}_j$ is central in $\bigoplus_{i=0}^{\nu_j}\gl_{d^{\ast}_{[j,i]}}$ with $\nu_j+1$ distinct eigenvalues. Since the flag $E_{[j,\bullet]}$ at $p_j$ has length $\nu_j+1$, it consists precisely of sums of eigenspaces of $\Phi_j$.
\item The graded residue $\mathcal{O}'_j$ is the orbit containing $\mathcal{O}_j$ under the inclusion (see (\ref{eq-asso-d}))
$$
\bigoplus_{i=0}^{\nu_j}\gl_{d^{\ast}_{[j,i]}}\longrightarrow\bigoplus_{i=0}^{\mu_j}\gl_{d^{\prime\ast}_{[j,i]}}.
$$
\end{itemize}
The morphism $\mathbf{Var}_2$ will be defined as a degeneration of parabolic structures:
$$
(E,E_{[\bullet,\bullet]},\Phi)\longmapsto(E,E'_{[\bullet,\bullet]},\Phi)
$$
where $E'_{[j,\bullet]}$ is a degeneration of the flag $E_{[j,\bullet]}$. In view of the above conditions and the fact that $\mathcal{M}_{Dol}(\mathbf{d}',\bs\alpha',(\mathcal{O}'_j)_j)$ is completely determined by the Betti moduli space $\mathcal{M}_B(\mathbf{d},\bs\beta,\bs\xi)$ that we start with, the only flexibility we have is in $\bs\alpha$. 
\begin{Rem}\label{Rem-d=tilde-d}
(1). The filtered local systems in $\mathcal{M}_B(\mathbf{d},\bs\beta,\bs\xi)$ also have type $\bs\tau(\underline{\nu})$. (2). The vectors $\mathbf{d}^{\ast}$ and $\tilde{\mathbf{d}}^{\ast}$ are equal up to permuting their components with the same $j$.
\end{Rem}

For an arbitrary parabolic weight $\bs\alpha$, a degeneration of parabolic structure may not preserve semi-stability. The usual option is to take a generic $\bs\alpha$ in a small neighbourhood of $\bs\alpha'$; however, generic weights do not exist if the dimension vector is divisible. 
\begin{Defn}\label{defn-almost-gen-w}
Fix  $e\in\mathbb{Z}$ and strict $\mathbf{c}\in\mathbb{Z}_{\ge0}^{\bs\tau(\underline{\nu})}$ of rank $n$, and let $\bs\alpha\in\mathbb{R}^{\bs\tau(\underline{\nu})}$ be a parabolic weight such that 
$$
e+\sum_{[j,i]\in\bs\tau(\underline{\nu})} \alpha_{[j,i]}c^{\ast}_{[j,i]}=0.
$$
We say that $\bs\alpha$ is almost generic if for any strict $\mathbf{c}'\in\mathbb{Z}_{\ge0}^{\bs\tau(\underline{\nu})}$ of rank $n'\le n$ and any $e'\in\mathbb{Z}$ the equality
$$
e'+\sum_{[j,i]\in\bs\tau(\underline{\nu})} \alpha_{[j,i]}c^{\prime\ast}_{[j,i]}=0
$$
holds if and only if the vector $(e,(c_{[j,i]})_{[j,i]})$ is a $\mathbb{Q}$-multiple of $(e',(c'_{[j,i]})_{[j,i]})$ (if and only if $(e,(c^{\ast}_{[j,i]})_{[j,i]})$ is a $\mathbb{Q}$-multiple of $(e',(c^{\prime\ast}_{[j,i]})_{[j,i]})$); we say that $\bs\alpha$ is generic if the above equality does not hold unless $(e',(c'_{[j,i]})_{[j,i]})$ is zero or is equal to $(e,(c_{[j,i]})_{[j,i]})$.
\end{Defn}

Let $I=\{x\in\mathbb{R}|0\le x<1\}$ and define the space of weights:
$$
I^{\bs\tau(\underline{\nu})}_{\Delta}:=\{\bs\alpha\in I^{\bs\tau(\underline{\nu})}\mid0\le \alpha_{[j,0]}\le \alpha_{[j,1]}\le\cdots\le\alpha_{[j,\nu_j]}\text{ for any $j$}\}.
$$
Note that we allow $\alpha_{[j,i]}=\alpha_{[j,i+1]}$ in this space. For $(e,\mathbf{c})$ as in the above definition, define
$$
I^{\bs\tau(\underline{\nu})}(e,\mathbf{c}):=\{\bs\alpha\in I^{\bs\tau(\underline{\nu})}_{\Delta}\mid e+\sum_{[j,i]\in\bs\tau(\underline{\nu})} \alpha_{[j,i]}c^{\ast}_{[j,i]}=0\},
$$
which is a hyperplane in $I^{\bs\tau(\underline{\nu})}_{\Delta}$. If a vector $(e',\mathbf{c}')$ is not a scalar multiple of $(e,\mathbf{c})$, then $I^{\bs\tau(\underline{\nu})}(e',\mathbf{c}')$ either does not meet $I^{\bs\tau(\underline{\nu})}(e,\mathbf{c})$ or intersects with $I^{\bs\tau(\underline{\nu})}(e,\mathbf{c})$ in a lower dimensional affine space, thus forming a \textit{wall} in $I^{\bs\tau(\underline{\nu})}(e,\mathbf{c})$. By definition, a weight $\bs\alpha\in I^{\bs\tau(\underline{\nu})}(e,\mathbf{c})$ is almost generic if it lies in the complement of the union of these walls.

Given a degeneration of types $\sigma:\bs\tau(\underline{\mu})\rightarrow\bs\tau(\underline{\nu})$ and a weight $\bs\alpha\in I^{\bs\tau(\underline{\mu})}_{\Delta}$, we define $\smash{\sigma_{\ast}\bs\alpha=((\sigma_{\ast}\alpha)_{[j,i]})_{[j,i]}\in I^{\bs\tau(\underline{\nu})}_{\Delta}}$ by defining $(\sigma_{\ast}\alpha)_{[j,i']}=\alpha_{[j,i]}$ whenever $\sigma(i)\le i'<\sigma(i+1)$. It is easy to see that $\sigma_{\ast}$ restricts to a map
$$
\sigma_{\ast}:I^{\bs\tau(\underline{\mu})}(e,\sigma^{\ast}\mathbf{c})\longrightarrow I^{\bs\tau(\underline{\nu})}(e,\mathbf{c}).
$$
Suppose that $\mathscr{E}=(E,E_{[\bullet,\bullet]},\Phi)$ is a parabolic Higgs bundle of type $\bs\tau(\underline{\nu})$. Then, a parabolic Higgs bundle $\mathscr{E}'=(E,E'_{[\bullet,\bullet]},\Phi)$ of type $\bs\tau(\underline{\mu})$ is obtained from the former by the degeneration $\sigma$ if $E'_{[j,i]}=E_{[j,\sigma(i)]}$ for any $[j,i]$.
\begin{Lem}\label{Lem-para-deg-degen}
Let $\mathscr{E}$ and $\mathscr{E}'$ be as above, and let $\bs\alpha'\in I_{\Delta}^{\bs\tau(\underline{\mu})}$. Then, we have
$$
\deg_{\bs\alpha'}\mathscr{E}'=\deg_{\sigma_{\ast}\bs\alpha'}\mathscr{E}.
$$
\end{Lem}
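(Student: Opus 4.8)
The plan is to reduce the identity to a purely combinatorial rearrangement of sums, treated one puncture at a time. Since $\mathscr{E}$ and $\mathscr{E}'$ share the same underlying bundle $E$ and the same Higgs field $\Phi$, the ordinary degree term $\deg E$ agrees on both sides; hence it suffices to match the parabolic correction terms. These split as sums over $j$, so I would fix a puncture $p_j$ and aim to prove
$$
\sum_{i=0}^{\mu_j}\alpha'_{[j,i]}\dim E'_{[j,i]}/E'_{[j,i+1]}=\sum_{i'=0}^{\nu_j}(\sigma_{\ast}\alpha')_{[j,i']}\dim E_{[j,i']}/E_{[j,i'+1]}.
$$

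First I would unwind the definition of the degeneration. Since $E'_{[j,i]}=E_{[j,\sigma(i)]}$, each graded piece of $\mathscr{E}'$ is a composite quotient $E'_{[j,i]}/E'_{[j,i+1]}=E_{[j,\sigma(i)]}/E_{[j,\sigma(i+1)]}$ of the finer flag of $\mathscr{E}$. Telescoping the dimension through the intermediate steps gives
$$
\dim E'_{[j,i]}/E'_{[j,i+1]}=\sum_{i'=\sigma(i)}^{\sigma(i+1)-1}\dim E_{[j,i']}/E_{[j,i'+1]},
$$
so the left-hand side becomes a double sum over $i$ and over $i'\in[\sigma(i),\sigma(i+1))$, with each term weighted by $\alpha'_{[j,i]}$.

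On the right-hand side I would invoke the definition $(\sigma_{\ast}\alpha')_{[j,i']}=\alpha'_{[j,i]}$ for the unique $i$ with $\sigma(i)\le i'<\sigma(i+1)$, and then regroup the single sum over $i'$ according to that index $i$. This produces exactly the same double sum obtained above, and comparing the two expressions closes the identity at $p_j$; summing over $j$ and adding back $\deg E$ yields the claim.

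The only real care needed is in the boundary conventions. The definition of a degeneration of types already gives $\sigma(0)=0$, and I would record the convention $\sigma(\mu_j+1)=\nu_j+1$ (forced by $E'_{[j,\mu_j+1]}=0=E_{[j,\nu_j+1]}$), so that the half-open intervals $[\sigma(i),\sigma(i+1))$ for $0\le i\le\mu_j$ partition $\{0,1,\dots,\nu_j\}$. Once this partition is in place, both the telescoping and the regrouping range bijectively over the same index set, and the remaining computation is routine. This index bookkeeping is the main (and essentially only) obstacle: there is no geometric content beyond the observation that $\sigma$ merely forgets some of the intermediate subspaces of each flag while leaving $E$ and $\Phi$ untouched.
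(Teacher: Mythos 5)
Your proof is correct and is essentially identical to the paper's own argument: the paper's proof is precisely the one-line telescoping identity $\alpha'_{[j,i']}\dim E'_{[j,i']}/E'_{[j,i'+1]}=\alpha'_{[j,i']}\sum_{\sigma(i')\le i<\sigma(i'+1)}\dim E_{[j,i]}/E_{[j,i+1]}$, which is what you prove after unwinding $E'_{[j,i]}=E_{[j,\sigma(i)]}$ and the definition of $\sigma_{\ast}\bs\alpha'$. Your extra care with the boundary convention $\sigma(\mu_j+1)=\nu_j+1$ is a reasonable elaboration of a detail the paper leaves implicit.
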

\begin{proof}
This is simply the equality 
$$
\alpha'_{[j,i']}\dim E'_{[j,i']}/E'_{[j,i'+1]}=\alpha'_{[j,i']}\sum_{\sigma(i')\le i<\sigma(i'+1)}\dim E_{[j,i]}/E_{[j,i+1]}.
$$
\end{proof}
\begin{Lem}\label{Lem-para-type-degen}
Suppose that $(E,E_{[\bullet,\bullet]},\Phi)$ is a parabolic Higgs bundle of type $\bs\tau(\underline{\nu})$ and that $(E,E'_{[\bullet,\bullet]},\Phi)$ is a parabolic Higgs bundle of type $\bs\tau(\underline{\mu})$ obtained from the former by the degeneration $\sigma$. Let $\bs\alpha'$ be a parabolic weight for $E'_{[\bullet,\bullet]}$. Let $F\subset E$ be a $\Phi$-invariant vector bundle, and let $F'_{[\bullet,\bullet]}$ and $F_{[\bullet,\bullet]}$ be the parabolic structures induced by $E'_{[\bullet,\bullet]}$ and $E_{[\bullet,\bullet]}$ respectively. Then, 
\begin{itemize}
\item[(i)] $(F,F'_{[\bullet,\bullet]},\Phi)$ is obtained from $(F,F_{[\bullet,\bullet]},\Phi)$ by a degeneration of types.
\end{itemize}
Moreover, if we denote by $\sigma^F$ the degeneration of types as in (i) and by $\bs\alpha^{\prime F}$ the induced parabolic weight for $F'_{[\bullet,\bullet]}$, then we have
\begin{itemize}
\item[(ii)] $\sigma^{F}_{\ast}\bs\alpha^{\prime F}=\sigma_{\ast}\bs\alpha'$,
\end{itemize}
where the righthand side is the weight for $F_{[\bullet,\bullet]}$ induced from $E_{[\bullet,\bullet]}$.
\end{Lem}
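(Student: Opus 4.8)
The plan is to reduce everything to a fibrewise combinatorial statement at a single puncture $p_j$, since the flags, weights and degenerations are all defined puncture by puncture; so I fix $j$ and suppress it from the notation. Recall that $E'_\bullet$ is the coarsening $E'_a=E_{\sigma(a)}$ of the flag $E_\bullet$ on $E_{p_j}$, and that the parabolic structure induced on a $\Phi$-invariant $F$ is obtained by intersecting with $F_{p_j}$. For a nonzero vector $v\in F_{p_j}$ I would introduce its fine and coarse levels
\[
\ell(v):=\max\{i\mid v\in E_i\},\qquad \ell'(v):=\max\{a\mid v\in E'_a\},
\]
and record the key relation $\ell'(v)=\max\{a\mid \sigma(a)\le \ell(v)\}$, which is immediate from $E'_a=E_{\sigma(a)}$. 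This relation is the combinatorial engine of the whole lemma.

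For (i) I would observe that the distinct nonzero spaces of the flag induced on $F$ by $E'_\bullet$ are exactly the distinct values among $F_{p_j}\cap E'_a=F_{p_j}\cap E_{\sigma(a)}$, and hence form a subcollection of the distinct values among the $F_{p_j}\cap E_i$, i.e. of the flag induced by $E_\bullet$. Thus $F'_{[\bullet]}$ is a coarsening of $F_{[\bullet]}$; the map $\sigma^F$ recording which fine steps survive is increasing in the $i$-component and fixes $[j,0]$ (because $F_{p_j}\cap E'_{[j,0]}=F_{p_j}\cap E_{[j,0]}=F_{p_j}$), so it is a genuine degeneration of types. This gives (i).

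For (ii) I would compute the weight attached to each graded piece under both routes and check the values coincide. On the fine route one induces from $(E_\bullet,\sigma_{\ast}\bs\alpha')$: the distinct space of $F_{[\bullet]}$ that is the top space of $v$ has top $E$-index $\ell(v)$, so by the induced-weight rule it receives weight $(\sigma_{\ast}\alpha')_{\ell(v)}$, which by the definition of $\sigma_{\ast}$ and the key relation equals $\alpha'_{\ell'(v)}$. On the coarse route one first induces from $(E'_\bullet,\bs\alpha')$: the top space of $v$ in $F'_{[\bullet]}$ has top $E'$-index $\ell'(v)$ and hence weight $\alpha'_{\ell'(v)}$; applying $\sigma^F_{\ast}$ then assigns to each step of $F_{[\bullet]}$ the weight of the coarse step of $F'_{[\bullet]}$ containing it, and the step containing $v$ lies in the coarse step that is the top space of $v$, so it again receives $\alpha'_{\ell'(v)}$. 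Since both routes assign the same value $\alpha'_{\ell'(v)}$ to every $v$, the two weights on $F_{[\bullet]}$ agree, i.e. $\sigma^{F}_{\ast}\bs\alpha^{\prime F}=\sigma_{\ast}\bs\alpha'$.

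The main obstacle is purely notational rather than conceptual: one must carefully match the two conventions in play, namely the induced-weight rule (which reads off the largest flag index whose step still contains a given subspace) against the pushforward rule for $\sigma_{\ast}$ and $\sigma^F_{\ast}$ (which spreads a weight constantly across a block of finer indices). The one point requiring genuine care is checking that intersecting with $F_{p_j}$ commutes with taking tops, i.e. that the top coarse space of $v$ is the smallest coarse step containing the top fine space of $v$; once this is phrased through $\ell$ and $\ell'$ it is a one-line verification. I expect no input beyond the $\Phi$-invariance already used to define the induced parabolic structures, and the companion identity of Lemma \ref{Lem-para-deg-degen} (the analogous statement for $E$ itself) serves as a reassuring consistency check.
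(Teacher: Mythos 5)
Your proof is correct and follows essentially the same route as the paper's: part (i) by observing that the coarse flag members $F_{p_j}\cap E'_{[j,\bullet]}$ form a subcollection of the fine flag members $F_{p_j}\cap E_{[j,\bullet]}$, and part (ii) by a direct combinatorial matching of the weights induced along the two routes. Your bookkeeping via the levels $\ell(v)$, $\ell'(v)$ of vectors is just a reparametrisation of the paper's argument, which sandwiches the relevant fine flag space $E_{[j,i_2]}$ between $E'_{[j,i_1]}$ and $E'_{[j,i_1+1]}$ to reach the same conclusion.
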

\begin{proof}
(i). The flag $F_{[j,\bullet]}$ on the fiber $F_{p_j}$ consists of subspaces of the form $F_{p_j}\cap E_{[j,i]}$, and similarly for $F'_{[j,\bullet]}$. Since $\{E'_{[j,i']}\mid 0\le i'\le \mu_j\}$ is a subset of $\{E_{[j,i]}\mid 0\le i\le \nu_j\}$, the assertion follows.

(ii). By definition, the subquotient $F'_{[j,i']}/F'_{[j,i'+1]}$ has weight $\alpha'_{[j,i_1]}$ if $E'_{[j,i_1]}$ is the smallest space in the flag $E'_{[j,\bullet]}$ such that the intersection with $F_{p_j}$ is $F'_{[j,i']}$. Then, for $\sigma_F(i')\le i<\sigma_F(i'+1)$, the left hand side of (ii) assigns to the subquotient $F_{[j,i]}/F_{[j,i+1]}$ the weight $\alpha'_{[j,i_1]}$. We need to show that this is also the weight induced from $\sigma_{\ast}\bs\alpha'$. Let $E_{[j,i_2]}$ be the smallest space in the flag $E_{[j,\bullet]}$ such that the intersection with $F_{p_j}$ is $F_{[j,i]}$. We locate $E_{[j,i_2]}$ in $E_{[j,\bullet]}$ as follows. On the one hand, since $F_{[j,i]}\subset F'_{[j,i']}$, we have $E_{[j,i_2]}\subset E'_{[j,i_1]}$. On the other hand, since $F_{[j,i]}\supsetneq F'_{[j,i'+1]}$, we have $E_{[j,i_2]}\supsetneq E'_{[j,i_1+1]}$. By the definition of $\sigma_{\ast}\bs\alpha'$, the induced weight for $F_{[j,i]}/F_{[j,i+1]}$ is also $\alpha'_{[j,i_1]}$.
\end{proof}

Now, let $\mathcal{M}_{Dol}(\tilde{\mathbf{d}},\bs\alpha,(\mathcal{O}_j)_j)$ and $\mathcal{M}_{Dol}(\mathbf{d}',\bs\alpha',(\mathcal{O}'_j)_j)$ be as at the beginning of this subsection, and let $\sigma:\bs\tau(\underline{\mu})\longrightarrow\bs\tau(\underline{\nu})$ be the given degeneration. Let $e$ be determined by $\bs\alpha'$ and $\mathbf{d}'$ as in Lemma \ref{Lem-Betti-Dol-indiv}.

\begin{Prop}\label{Prop-Var2}
Suppose that $\bs\alpha\in I^{\bs\tau(\underline{\nu})}(e,\tilde{\mathbf{d}})$ is such that the interval
\begin{equation}\label{eq-Prop-Var2-0}
\{\sigma_{\ast}\bs\alpha'+t(\bs\alpha-\sigma_{\ast}\bs\alpha')\mid 0< t\le 1\}
\end{equation}
does not meet any wall in $I^{\bs\tau(\underline{\nu})}(e,\tilde{\mathbf{d}})$, and that $\bs\alpha$ is almost generic. Then, the following assertions hold:
\begin{itemize}
\item[(i)] The morphism $\mathbf{Var}_2$ of degenerating parabolic structures is well-defined.
\item[(ii)] The inverse image of an $\bs\alpha'$-stable under $\mathbf{Var}_2$ is nonempty and consists of an $\bs\alpha$-stable parabolic Higgs bundle.
\end{itemize}
\end{Prop}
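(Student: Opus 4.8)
The plan is to reduce both assertions to a comparison, on the fine type $\bs\tau(\underline{\nu})$, between the weight $\bs\alpha$ and the weight $\sigma_{\ast}\bs\alpha'$, and then to run a wall-crossing argument along the segment (\ref{eq-Prop-Var2-0}). The starting point is a degree identity: for any $\Phi$-invariant subbundle $F\subset E$ of a fine object $\mathscr{E}$ that degenerates (via $\sigma$) to the coarse object $\mathscr{E}'$, Lemma \ref{Lem-para-deg-degen} applied to $F$ with its degeneration $\sigma^F$, together with Lemma \ref{Lem-para-type-degen}(ii), gives $\deg_{\bs\alpha'}\mathscr{F}'=\deg_{\sigma^F_{\ast}\bs\alpha^{\prime F}}\mathscr{F}=\deg_{\sigma_{\ast}\bs\alpha'}\mathscr{F}$. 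Hence $\mathscr{E}'$ is $\bs\alpha'$-(semi)stable if and only if $\mathscr{E}$ is $\sigma_{\ast}\bs\alpha'$-(semi)stable, and both parts of the proposition become statements comparing $\sigma_{\ast}\bs\alpha'$- and $\bs\alpha$-stability of one and the same fine object.

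For the wall-crossing I would fix a proper $\Phi$-invariant $F\subsetneq E$ and study the affine-linear function $t\mapsto\deg_{\bs\alpha_t}\mathscr{F}$ on $[0,1]$, where $\bs\alpha_t:=\sigma_{\ast}\bs\alpha'+t(\bs\alpha-\sigma_{\ast}\bs\alpha')$. Both endpoints lie in $I^{\bs\tau(\underline{\nu})}(e,\tilde{\mathbf{d}})$, so the whole segment does, and in particular $\deg_{\bs\alpha_t}E=0$ throughout. The induced flag on $F$ is a genuine decreasing flag, so its numerical type is strict of some rank $n'\le n$ and cuts out a hyperplane in $I^{\bs\tau(\underline{\nu})}(e,\tilde{\mathbf{d}})$, which is one of the walls of Definition \ref{defn-almost-gen-w} exactly when that type is not proportional to $(e,\tilde{\mathbf{d}})$; for a proportional type the function is identically $0$. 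Two facts then follow from the hypothesis that (\ref{eq-Prop-Var2-0}) avoids every wall. (A) If $\mathscr{E}$ is $\bs\alpha$-semistable, then for non-proportional $F$ the value at $t=1$ is $\le0$ and the function never vanishes on $(0,1]$, so it is $<0$ there and $\le0$ at $t=0$ by continuity, while proportional $F$ contribute $0$; thus $\mathscr{E}$ is $\sigma_{\ast}\bs\alpha'$-semistable. (B) If $\mathscr{E}$ is $\sigma_{\ast}\bs\alpha'$-stable, then $\deg_{\bs\alpha_0}\mathscr{F}<0$ for every proper $F$ (so no proper $F$ is of proportional type); each such function is negative at $t=0$ and nonzero on $(0,1]$, hence negative on all of $[0,1]$ by the intermediate value theorem, and $\mathscr{E}$ is $\bs\alpha$-stable.

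Part (i) follows from (A) and the degree identity: degenerating an $\bs\alpha$-semistable fine object yields an $\bs\alpha'$-semistable coarse object. Since this operation leaves $(E,\Phi)$ unchanged and, by Lemma \ref{Lem-para-type-degen}, carries subobjects to subobjects, it is compatible with direct sums and with Jordan--H\"older graded objects, so it descends to a well-defined map of $S$-equivalence classes; its algebraicity is the standard output of the variation-of-stability formalism of Thaddeus \cite{T02} and Boden--Hu \cite{BH} applied to the universal family, and this map is $\mathbf{Var}_2$. For part (ii), given an $\bs\alpha'$-stable $\mathscr{E}'=(E,E'_{[\bullet,\bullet]},\Phi)$ I would build a preimage by refining each coarse graded piece $E'_{[j,i]}/E'_{[j,i+1]}$ into the eigenspaces of the residue $\Phi_j$. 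Because $\mathcal{O}'_j$ is, by the defining conditions of $\mathbf{Var}_2$, the orbit of the central element $\mathcal{O}_j\in\bigoplus_i\gl_{d^{\ast}_{[j,i]}}$ with $\nu_j+1$ distinct eigenvalues, these eigenspaces have dimensions $\tilde{d}^{\ast}_{[j,i]}$ up to the reordering of Remark \ref{Rem-d=tilde-d}(2); ordering them according to the distinct increasing weights of $\bs\alpha$ and the prescribed eigenvalues of $\mathcal{O}_j$ produces a fine object $\mathscr{E}$ of type $\bs\tau(\underline{\nu})$, dimension $\tilde{\mathbf{d}}$ and central residue $\mathcal{O}_j$ that degenerates to $\mathscr{E}'$. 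By the degree identity $\mathscr{E}$ is $\sigma_{\ast}\bs\alpha'$-stable, and by (B) it is $\bs\alpha$-stable, so it is a genuine point of $\mathcal{M}_{Dol}(\tilde{\mathbf{d}},\bs\alpha,(\mathcal{O}_j)_j)$ lying over $\mathscr{E}'$; the same two steps show that any preimage is $\bs\alpha$-stable, and the forced nature of the eigenspace refinement shows it is in fact unique.

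The step I expect to be the main obstacle is the construction in part (ii): one must verify that refining by residue-eigenspaces is forced, and that the combinatorics of the inclusion $\bigoplus_i\gl_{d^{\ast}_{[j,i]}}\hookrightarrow\bigoplus_i\gl_{d^{\prime\ast}_{[j,i]}}$ together with $\mathbf{d}'=\sigma^{\ast}\tilde{\mathbf{d}}$ matches the eigenspace dimensions to $\tilde{\mathbf{d}}$ on the nose (up to the permutation of Remark \ref{Rem-d=tilde-d}(2)). The wall-crossing itself is robust once one knows that (\ref{eq-Prop-Var2-0}) avoids every wall; the only delicate point there, already flagged above, is that the excluded endpoint $\sigma_{\ast}\bs\alpha'$ is permitted to sit on walls, so the strict inequalities at $t=0$ must be supplied by stability rather than by genericity of the weight.
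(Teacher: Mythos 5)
Your proposal is correct and follows essentially the same route as the paper: the same two lemmas (Lemma \ref{Lem-para-deg-degen} and Lemma \ref{Lem-para-type-degen}) reduce everything to comparing $\sigma_{\ast}\bs\alpha'$- and $\bs\alpha$-(semi)stability of the fine object, the wall-avoidance hypothesis plus almost genericity rules out sign changes of $t\mapsto\deg_{\bs\alpha_t}\mathscr{F}$ exactly as in the paper's contradiction argument (with proportional types handled by the vanishing of the total degree), and part (ii) rests on the same observation that the residue-eigenspace refinement of the coarse flag is forced and unique. Your extra remarks on $S$-equivalence, algebraicity, and the dimension bookkeeping via Remark \ref{Rem-d=tilde-d}(2) are harmless elaborations of what the paper asserts implicitly.
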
 
\begin{proof}
For (i), we need to show that if $\mathscr{E}:=(E,E_{[\bullet,\bullet]},\Phi)$ is an $\bs\alpha$-semi-stable parabolic Higgs bundle, then $\mathbf{Var}_2(\mathscr{E})=\mathscr{E}':=(E,E'_{[\bullet,\bullet]},\Phi)$ is $\bs\alpha'$-semi-stable. Assume on the contrary that there exists a $\Phi$-invariant subbundle $F\subset E$ with the parabolic structure $F'_{[\bullet,\bullet]}$ induced by $E'_{[\bullet,\bullet]}$ and the induced Higgs field $\Phi$ such that the parabolic degree of $\mathscr{F}':=(F,F'_{[\bullet,\bullet]},\Phi)$ satisfies $\deg_{\bs\alpha'}\mathscr{F}'>0$ (recall that $\bs\alpha'$ naturally induces a weight for $F'_{[\bullet,\bullet]}$). Let $\mathscr{F}:=(F,F_{[\bullet,\bullet]},\Phi)$ be the parabolic Higgs bundle with the parabolic structure induced from $\mathscr{E}$ and denote by $\mathbf{d}^F$ the dimension vector of its parabolic structures. Consider the parabolic weight $\sigma_{\ast}\bs\alpha'$ for $\mathscr{F}$ induced from $\mathscr{E}$. Then, we have 
\begin{equation}\label{eq-Prop-Var2-1}
\deg_{\sigma_{\ast}\bs\alpha'}\mathscr{F}>0
\end{equation}
by Lemma \ref{Lem-para-deg-degen} and Lemma \ref{Lem-para-type-degen}. However, $\mathscr{E}$ is $\bs\alpha$-semi-stable, and thus $\deg_{\bs\alpha}\mathscr{F}\le 0$. Since $\bs\alpha$ is almost generic, we have either 
\begin{itemize}
\item $\deg_{\bs\alpha}\mathscr{F}<0$, or
\item $\deg_{\bs\alpha}\mathscr{F}=0$, in which case $(\deg F,\mathbf{d}^F)$ is a scalar multiple of $(e,\tilde{\mathbf{d}})$. (We regard $\mathbf{d}^F$ as an element of $\mathbb{Z}_{\ge0}^{\bs\tau(\underline{\nu})}$ by defining $d^F_{[j,i]}=\dim E_{[j,i]}\cap F_{p_j}$ for any $[j,i]\in\bs\tau(\underline{\nu})$.)
\end{itemize}
Should the second case occur, the equality $\deg_{\sigma_{\ast}\bs\alpha'}\mathscr{E}=\deg_{\bs\alpha'}\mathscr{E}'=0$ would imply $\deg_{\sigma_{\ast}\bs\alpha'}\mathscr{F}=0$, which contradicts (\ref{eq-Prop-Var2-1}). However, the first case $\deg_{\bs\alpha}\mathscr{F}<0$ implies that the interval (\ref{eq-Prop-Var2-0}) meets a wall, contradicting to our assumption. It follows that $\mathbf{Var}_2(\mathscr{E})$ is $\bs\alpha'$-semi-stable.

Now we prove (ii). Observe that for any $\mathscr{E}'=(E,E'_{[\bullet,\bullet]},\Phi)\in \mathcal{M}(\mathbf{d}',\bs\alpha',(\mathcal{O}'_j)_j)$, there is a (unique) refinement $E_{[\bullet,\bullet]}$ of the flag $E'_{[\bullet,\bullet]}$ such that the graded residue of $\Phi$ at each $p_j$ lies in the adjoint orbit $\mathcal{O}_j$.  We need to show that if $\mathscr{E}'$ is $\bs\alpha'$-stable, then $\mathscr{E}$ is $\bs\alpha$-stable; in particular, $\mathscr{E}$ lies in $\mathbf{Var}_2^{-1}(\mathscr{E}')$. Assume on the contrary that $\mathscr{E}$ is not $\bs\alpha$-stable, so that there exists a vector subbundle $F\subset E$ that defines a parabolic Higgs subbundle $\mathscr{F}$ with $\deg_{\bs\alpha}\mathscr{F}\ge0$. However, the parabolic Higgs subbundle $\mathscr{F}'$ satisfies $\deg_{\bs\alpha'}\mathscr{F}'<0$. By Lemma \ref{Lem-para-deg-degen} and Lemma \ref{Lem-para-type-degen} again, we have $\deg_{\sigma_{\ast}\bs\alpha'}\mathscr{F}<0$. Similar arguments as in the previous paragraph show that this is not possible.
\end{proof}

\begin{Prop}\label{Prop-Var2-inf}
Suppose that we are in case ($\mathbf{Aff}^{\infty}$) and that $\bs\alpha$ is a generic parabolic weight such that the interval
\begin{equation}\label{eq-Prop-Var2-inf-0}
\{\sigma_{\ast}\bs\alpha'+t(\bs\alpha-\sigma_{\ast}\bs\alpha')\mid 0< t\le 1\}
\end{equation}
does not meet any wall in $I^{\bs\tau(\underline{\nu})}(e,\tilde{\mathbf{d}})$. Then, the morphism $\mathbf{Var}_2$ is well-defined and induces a bijection of $\mathbb{C}$-points. 
\end{Prop}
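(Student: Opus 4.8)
The plan is to exhibit a two-sided inverse to $\mathbf{Var}_2$ given by the canonical refinement of flags, exploiting that in case $(\mathbf{Aff}^{\infty})$ the dimension vector $\tilde{\mathbf{d}}$ is indivisible, so that genuinely generic weights exist. First I would establish well-definedness exactly as in Proposition \ref{Prop-Var2}(i): if $\mathscr{E}$ is $\bs\alpha$-stable and a $\Phi$-invariant subbundle $F\subset E$ destabilized $\mathscr{E}'=\mathbf{Var}_2(\mathscr{E})$, then Lemma \ref{Lem-para-deg-degen} and Lemma \ref{Lem-para-type-degen} give $\deg_{\sigma_{\ast}\bs\alpha'}\mathscr{F}>0$, while $\bs\alpha$-stability forces $\deg_{\bs\alpha}\mathscr{F}<0$; since $t\mapsto\deg_{\sigma_{\ast}\bs\alpha'+t(\bs\alpha-\sigma_{\ast}\bs\alpha')}\mathscr{F}$ is affine, a sign change on $(0,1]$ would produce a wall, contradicting the hypothesis on the interval (\ref{eq-Prop-Var2-inf-0}). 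Here the full genericity of $\bs\alpha$ is what rules out the borderline value $\deg_{\bs\alpha}\mathscr{F}=0$: since $\tilde{\mathbf{d}}$ is indivisible, no proper nonzero subbundle can have $(\deg F,\mathbf{d}^F)$ proportional to $(e,\tilde{\mathbf{d}})$, so Definition \ref{defn-almost-gen-w} yields a strict inequality for such $F$.

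Next I would construct the inverse by refinement. Because the weight $\bs\theta$ was chosen generic in case $(\mathbf{Aff}^{\infty})$, the multiplicative quiver variety $\mathcal{M}_{\bs\theta}(\mathbf{q},\mathbf{d})$ carries no strictly semi-stable points; transporting this across the isomorphism $\mathbf{Iso}_2$ and the stability-preserving homeomorphism $\mathbf{NH}_1$, I conclude that every $\mathbb{C}$-point of the target $\mathcal{M}_{Dol}(\mathbf{d}',\bs\alpha',(\mathcal{O}'_j)_j)$ is represented by an $\bs\alpha'$-stable parabolic Higgs bundle $\mathscr{E}'$. For such an $\mathscr{E}'$ there is, as in the proof of Proposition \ref{Prop-Var2}(ii), a unique refinement $E_{[\bullet,\bullet]}$ of $E'_{[\bullet,\bullet]}$ whose graded residues at each $p_j$ lie in $\mathcal{O}_j$; this produces $\mathscr{E}=(E,E_{[\bullet,\bullet]},\Phi)$ of dimension $\tilde{\mathbf{d}}$, whose total parabolic degree vanishes since $\bs\alpha\in I^{\bs\tau(\underline{\nu})}(e,\tilde{\mathbf{d}})$.

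I would then show the refinement lands in the source, i.e. that $\mathscr{E}$ is $\bs\alpha$-stable, by the argument of Proposition \ref{Prop-Var2}(ii): a destabilizing $F$ would satisfy $\deg_{\bs\alpha}\mathscr{F}>0$ (strict, by genericity), while $\bs\alpha'$-stability of $\mathscr{E}'$ together with Lemma \ref{Lem-para-deg-degen} and Lemma \ref{Lem-para-type-degen} gives $\deg_{\sigma_{\ast}\bs\alpha'}\mathscr{F}<0$, and the same affine-interpolation and wall-avoidance argument excludes this. Assigning $\mathscr{E}'\mapsto\mathscr{E}$ therefore defines a map $\Psi$ from the target to the source. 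Finally I would check that $\Psi$ and $\mathbf{Var}_2$ are mutually inverse: both are canonical on the underlying pair $(E,\Phi)$, namely forgetting, respectively reinstating, the refinement cut out by the eigenspace decomposition of the residues, so $\mathbf{Var}_2\circ\Psi=\mathrm{id}$ and $\Psi\circ\mathbf{Var}_2=\mathrm{id}$ on isomorphism classes; since by indivisibility of $\tilde{\mathbf{d}}$ and genericity of $\bs\alpha$ every $\bs\alpha$-polystable source object is already $\bs\alpha$-stable, these identities hold on all $\mathbb{C}$-points, giving the bijection.

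The main obstacle I anticipate is the bookkeeping around the fact that $\sigma_{\ast}\bs\alpha'$ may itself lie on walls: the hypothesis only forbids walls on the half-open interval $0<t\le1$, so the degree of a subobject can vanish at $t=0$ yet be nonzero, of a definite sign, for $t>0$. Keeping the signs straight—showing that the relevant subobjects have $\deg_{\sigma_{\ast}\bs\alpha'}\mathscr{F}$ strictly positive or strictly negative rather than zero, which is exactly where genuine (not merely almost) genericity of $\bs\alpha$, hence indivisibility of $\tilde{\mathbf{d}}$, is essential—is the delicate point, together with the transport of the absence of strictly semi-stable objects from the generic quiver stability $\bs\theta$ through $\mathbf{Iso}_2$ and $\mathbf{NH}_1$, which is what guarantees that $\Psi$ is defined on the whole target rather than only on its stable locus.
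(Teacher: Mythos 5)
Your proposal is correct and follows essentially the same route as the paper: well-definedness via Proposition \ref{Prop-Var2}(i) (generic weights exist because $\tilde{\mathbf{d}}$ is indivisible in case ($\mathbf{Aff}^{\infty}$)), and bijectivity by observing that genericity of the stability condition on the Betti side forces every point of the target $\mathcal{M}_{Dol}(\mathbf{d}',\bs\alpha',(\mathcal{O}'_j)_j)$ to be $\bs\alpha'$-stable, so that Proposition \ref{Prop-Var2}(ii) supplies the unique $\bs\alpha$-stable preimage given by the canonical refinement. Note in passing that your appeal to indivisibility of $\tilde{\mathbf{d}}$ corrects what is evidently a typo in the paper's own proof, which says ``divisible'' where indivisibility is exactly what is needed for generic weights to exist.
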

\begin{proof}
In case ($\mathbf{Aff}^{\infty}$), the vector $\mathbf{d}$ is divisible, and so is $\tilde{\mathbf{d}}$ by Remark \ref{Rem-d=tilde-d} (2). We can therefore choose $\bs\alpha$ to be generic, and thus Proposition \ref{Prop-Var2} applies, giving a well-defined $\mathbf{Var}_2$. A simplification in this case is that 
$\mathcal{M}_{Dol}(\mathbf{d}',\bs\alpha',(\mathcal{O}'_j)_j)$ consists of stable parabolic Higgs bundles (because the stability condition on the Betti side is chosen to be generic), although $\bs\alpha'$ is not necessarily generic. The second part of Proposition \ref{Prop-Var2} shows that $\mathbf{Var}_2$ is bijective, since every object is stable in these moduli spaces.
\end{proof}
\begin{Rem}
We will see in the next subsection that if $\bs\alpha$ is generic, then the domain of $\mathbf{Var}_2$ is connected. If its target is also normal, then it follows from the Zariski main theorem that $\mathbf{Var}_2$ is an isomorphism. Normality is a reasonable assumption but does not seem to have been established in the literature: On the one hand, the moduli of strongly parabolic Higgs bundles are known to be normal; on the other hand, the normality of multiplicative quiver varieties can be transferred to the Dolbeault side if Simpson's isosingularity theorem \cite[Theorem 10.6]{Si94b} holds in this generality.
\end{Rem}

\subsection{Forgetting filtered structures}\label{subsec-FFS}\hfill

Let us clarify the last step in our reduction process: the isomorphism
$$
\mathbf{Iso}_3:\mathcal{M}_B(\mathbf{d},\bs\beta,\bs\xi')\lisom\mathcal{M}_B(\mathcal{C}'),
$$
which is defined by forgetting the filtered structures. The space $\mathcal{M}_B(\mathbf{d},\bs\beta,\bs\xi')$ parametrises local systems of filtered type $\mathbf{d}$ and weights $\bs\beta$, whose graded monodromy at each puncture is semi-simple with eigenvalues $\xi'_{[j,i]}$. Write $\mathcal{C}'=(C'_j)_{1\le j\le k}$. Then, the conjugacy class $C'_j$ is semi-simple and has eigenvalue $\xi'_{[j,i]}$ with multiplicity $d^{\ast}_{[j,i]}$. Forgetting the filtered structure obviously defines a morphism $\mathbf{Iso}_3$; in terms of multiplicative quiver varieties, this is simply changing the stability condition into the trivial one. We need to show that it is an isomorphism.

\begin{Lem}\label{Lem-almost-Dol-to-Betti}
Consider the nonabelian Hodge correspondence 
$$
\mathbf{NH}_2:\mathcal{M}_B(\mathbf{d},\bs\beta,\bs\xi') \lisom \mathcal{M}_{Dol}(\tilde{\mathbf{d}},\bs\alpha,(\mathcal{O}_j)_j).
$$
Suppose that $\bs\alpha$ is an almost generic parabolic weight. Then, the eigenvalues $\bs\xi'$ are almost generic, and thus the conjugacy classes $\mathcal{C}'$ are almost generic. In particular, if $\bs\alpha$ is generic, then the eigenvalues $\bs\xi'$ and thus $\mathcal{C}'$ are generic.
\end{Lem}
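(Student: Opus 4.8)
The plan is to read off the phase of each eigenvalue $\xi'_{[j,i]}$ from the transformation rule of Theorem \ref{Thm-NHT} and to show that every multiplicative relation among the $\xi'_{[j,i]}$ forces an integrality relation among the components of $\bs\alpha$, which is exactly the data that almost genericity of $\bs\alpha$ controls. For the correspondence $\mathbf{NH}_2$ every graded residue $\Phi_{[j,i]}$ is a scalar $b_{[j,i]}+c_{[j,i]}\sqrt{-1}$, so the transformation table gives $\xi'_{[j,i]}=\exp(-2\pi\sqrt{-1}\,\alpha_{[j,i]}+4\pi c_{[j,i]})$; in particular the argument of $\xi'_{[j,i]}$ equals $-2\pi\alpha_{[j,i]}$ modulo $2\pi$, while its modulus is $\exp(4\pi c_{[j,i]})$.

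Next I would rewrite the defining condition for (almost) genericity of the eigenvalues. Using $q_{\star}=\prod_j\xi'_{[j,0]}$ and $q_{[j,i]}=\xi'_{[j,i]}(\xi'_{[j,i-1]})^{-1}$, one has
$$
\mathbf{q}^{\bs\gamma}=\prod_{[j,i]}(\xi'_{[j,i]})^{\gamma^{\ast}_{[j,i]}}=\exp\Bigl(-2\pi\sqrt{-1}\sum_{[j,i]}\alpha_{[j,i]}\gamma^{\ast}_{[j,i]}+4\pi\sum_{[j,i]}c_{[j,i]}\gamma^{\ast}_{[j,i]}\Bigr),
$$
so that comparing arguments shows $\mathbf{q}^{\bs\gamma}=1$ forces $\sum_{[j,i]}\alpha_{[j,i]}\gamma^{\ast}_{[j,i]}\in\mathbb{Z}$. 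Here $\bs\gamma$ is the strict dimension vector recording how many copies of each eigenvalue $\xi'_{[j,i]}$ are selected, so $\gamma^{\ast}$ is its associated vector and $\bs\gamma$ has rank $\gamma_{\star}\le n$; these are precisely the vectors relevant to (\ref{eq-Defn-gene-CC}) and to Kostov's notion of almost genericity, as explained in the remark following Theorem \ref{Thm-Kos}. The modulus condition $\sum c_{[j,i]}\gamma^{\ast}_{[j,i]}=0$ will play no role below and is automatically consistent once $\bs\gamma$ is known.

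I would then invoke almost genericity of $\bs\alpha$ (Definition \ref{defn-almost-gen-w}) relative to the reference vector $(e,\tilde{\mathbf{d}})$, with $e$ produced by Lemma \ref{Lem-Betti-Dol-indiv}: setting $e':=-\sum_{[j,i]}\alpha_{[j,i]}\gamma^{\ast}_{[j,i]}\in\mathbb{Z}$, the relation $e'+\sum\alpha\gamma^{\ast}=0$ forces $(e',\bs\gamma)$ to be a $\mathbb{Q}$-multiple of $(e,\tilde{\mathbf{d}})$, whence $\bs\gamma\in\mathbb{Q}\tilde{\mathbf{d}}$. Since $\tilde{\mathbf{d}}^{\ast}$ agrees with $\mathbf{d}^{\ast}=m\bs\delta^{\ast}$ up to permuting components with the same $j$ (Remark \ref{Rem-d=tilde-d}(2)), the vector $\tilde{\mathbf{d}}$ is proportional to the (indivisible) $\bs\delta$, so $\bs\gamma$ is a rational multiple of $\bs\delta$; being a positive integer vector, $\bs\gamma=m''\bs\delta$ for some $m''\in\mathbb{Z}_{>0}$. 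Finally $\mathbf{q}^{\bs\gamma}=\mathbf{q}^{m''\bs\delta}=\zeta^{m''}=1$ together with $\zeta$ being a primitive $l$-th root of unity gives $l\mid m''$, i.e.\ $\bs\gamma=m'l\bs\delta$, which is exactly almost genericity of $\bs\xi'$ and hence of $\mathcal{C}'$. For the generic case, genericity of $\bs\alpha$ instead forces $(e',\bs\gamma)\in\{0,(e,\tilde{\mathbf{d}})\}$, so no proper $0<\bs\gamma<\tilde{\mathbf{d}}$ (equivalently, $0<\gamma_{\star}<n$) can satisfy $\mathbf{q}^{\bs\gamma}=1$; this is genericity of $\bs\xi'$ and $\mathcal{C}'$ in the sense of Definition \ref{Defn-gene-CC}.

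The main obstacle I anticipate is bookkeeping rather than conceptual. One must align the combinatorial data across $\mathbf{NH}_2$ carefully: confirming that the reference vector of the almost generic weight $\bs\alpha$ is genuinely $(e,\tilde{\mathbf{d}})$ with the integer $e$ from Lemma \ref{Lem-Betti-Dol-indiv}; checking that the vectors $\bs\gamma$ to be tested are strict of rank $\le n$, so that Definition \ref{defn-almost-gen-w} applies verbatim with $\gamma^{\ast}$ as the associated vector; and verifying that, on the refined type $\bs\tau(\underline{\nu})$, the vector $\tilde{\mathbf{d}}$ is indeed $m$ times a primitive vector corresponding to $\bs\delta$ (so that $\bs\gamma\in\mathbb{Q}\tilde{\mathbf{d}}$ really yields an integer multiple of $\bs\delta$). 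Once these identifications are made precise, the entire content is carried by the phase relation of the first paragraph and the defining property of almost generic weights.
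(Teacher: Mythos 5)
Your proposal is correct and follows essentially the same route as the paper's proof: both convert the multiplicative relation $\mathbf{q}^{\prime\bs\gamma}=1$ into the integral linear relation $e'+\sum_{[j,i]}\alpha_{[j,i]}\gamma^{\ast}_{[j,i]}=0$ via the nonabelian Hodge phase dictionary, and then invoke Definition \ref{defn-almost-gen-w} relative to $(e,\tilde{\mathbf{d}})$ to force $\bs\gamma$ to be proportional to the dimension vector, finishing with the root-of-unity observation. The one point you only flag as ``bookkeeping'' but the paper handles explicitly is the permutation of $i$-indices (the paper's $\tilde{\bs\gamma}^{\ast}$, cf.\ Remark \ref{Rem-d=tilde-d}): the Betti filtration is ordered by $\bs\beta$ while the Dolbeault flag is ordered by $\bs\alpha$, so the identification of eigenvalue arguments with weights holds only after permuting components with the same $j$, and the test vector fed into Definition \ref{defn-almost-gen-w} must be permuted accordingly.
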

\begin{proof}
Let $\mathbf{q}'$ be the deformation parameter defined by $\bs\xi'$ as in \S \ref{subset-MQV} and let $\bs\gamma\le \mathbf{d}$ be a strict dimension vector. Suppose that $\mathbf{q}^{\prime\bs\gamma}=1$. We need to show that $\mathbf{d}$ is a multiple of $\bs\gamma$. Let the argument of $\xi_{[j,i]}$ be $-2\pi ia_{[j,i]}$, the equality $\mathbf{q}^{\prime\bs\gamma}=1$ implies that
$$
\sum_{[j,i]\in\bs\tau(\underline{\nu})}a_{[j,i]}\gamma^{\ast}_{[j,i]}=-e'
$$
for some $e'\in\mathbb{Z}$, where we regard $\bs\gamma$ as an element of $\mathbb{Z}_{\ge0}^{\bs\tau(\underline{\nu})}$ with $\gamma_{[j,0]}=\gamma_{\star}$ for any $j$. Up to a permutation of the $i$-indices, the numbers $a_{[j,\bullet]}$ are precisely the weights $\alpha_{[j,\bullet]}$. Let $\tilde{\bs\gamma}^{\ast}$ be the vector obtained from $\bs\gamma^{\ast}$ by permuting the $i$-indices in the same way. Since the weight $\bs\alpha$ is almost generic, we have $m'(e',\tilde{\bs\gamma}^{\ast})=(e,\tilde{\mathbf{d}}^{\ast})$ for some $m'\in\mathbb{Z}_{>0}$; therefore, $m'\bs\gamma=\mathbf{d}$. 
\end{proof}

\begin{Prop}\label{Prop-Iso3}
Suppose that $\bs\xi'$ is almost generic. Then, the morphism
$$
\mathbf{Iso}_3:\mathcal{M}_B(\mathbf{d},\bs\beta,\bs\xi')\longrightarrow\mathcal{M}_B(\mathcal{C}')
$$
is an isomorphism. Moreover, this isomorphism matches $\bs\beta$-stable filtered local systems with irreducible local systems.
\end{Prop}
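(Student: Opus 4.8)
The plan is to leverage two features of the present situation. The first is that the eigenvalues $\xi'_{[j,0]},\dots,\xi'_{[j,\nu_j]}$ are pairwise distinct at each puncture $p_j$; this is guaranteed by the construction of $\bs\xi'$, since the refined parabolic type carries strictly increasing weights $\alpha_{[j,0]}<\cdots<\alpha_{[j,\nu_j]}$, which $\mathbf{NH}_2$ turns into scalars with pairwise distinct arguments $-2\pi\alpha_{[j,i]}$. The second is the almost genericity of $\bs\xi'$ (equivalently, of the parameter $\mathbf{q}'$ attached to $\bs\xi'$ as in \S \ref{subset-MQV}), which I will use to pin down the dimension vectors of all sub-local-systems.

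First I would record what distinctness gives. Writing $A'_j$ for the monodromy of $L$ at $p_j$, in a basis adapted to the filtration $A'_j$ is block upper-triangular with scalar diagonal blocks $\xi'_{[j,i]}\Id$; as the $\xi'_{[j,i]}$ are distinct, the minimal polynomial of $A'_j$ divides $\prod_i(x-\xi'_{[j,i]})$, so $A'_j$ is semi-simple and lies in $C'_j$. Hence $\mathbf{Iso}_3$ is well defined, and on points it has an explicit inverse: the filtration is forced to coincide with the eigenspace filtration $L_{[j,i]}=\bigoplus_{i'\ge i}\ker(A'_j-\xi'_{[j,i']})$, which is recovered canonically from $L$ alone. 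Thus $\mathbf{Iso}_3$ is a bijection on $\mathbb{C}$-points.

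The crux is a degree computation. For any sub-local-system $M\subset L$ of dimension vector $\bs\gamma$, restricting the monodromy relation $\prod_{j}A'_j=\Id$ to $M$ and taking determinants gives $\prod_{j}\prod_i(\xi'_{[j,i]})^{\gamma^{\ast}_{[j,i]}}=1$, that is, $\mathbf{q}'^{\bs\gamma}=1$. By almost genericity $\bs\gamma$ must be proportional to $\mathbf{d}$, say $\bs\gamma=c\mathbf{d}$, so the induced filtered structure $\mathscr{M}$ satisfies $\deg_{\bs\beta}\mathscr{M}=c\,\deg_{\bs\beta}\mathscr{L}=0$. Two things follow. Every object here is automatically $\bs\beta$-semistable; and $\mathscr{L}$ is $\bs\beta$-stable if and only if $L$ has no proper nontrivial sub-local-system, since any such would contribute a subobject of degree $0\not<0$. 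As the latter condition is exactly irreducibility of $L$ (the easy converse, that an irreducible $L$ is vacuously $\bs\beta$-stable, is immediate), this proves the second assertion.

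It remains to upgrade the bijection to a scheme isomorphism, which I expect to be the main obstacle. The same degree computation shows that the two notions of S-equivalence agree: all Jordan--H\"older constituents carry dimension vectors proportional to $\mathbf{d}$ and parabolic degree zero, so $\bs\beta$-polystability and semisimplicity single out the same closed points. In the quiver reformulation (cf. the Remark preceding this subsection), $\mathbf{Iso}_3$ is then the canonical projective morphism from the $\bs\theta$-stable GIT quotient to the affine GIT quotient; being projective and bijective it is finite, and since $\mathcal{M}_B(\mathcal{C}')$ is normal by Theorem \ref{Thm-Norm}, Zariski's main theorem forces it to be an isomorphism. The delicate points are therefore purely moduli-theoretic --- matching polystable objects with their semisimplifications in families and invoking normality --- whereas the determinant-plus-almost-genericity step is the clean conceptual heart of the argument.
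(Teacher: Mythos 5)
Your proposal is correct, and its skeleton coincides with the paper's proof: establish a bijection on $\mathbb{C}$-points, note that $\mathbf{Iso}_3$ is projective (it forgets flags / degenerates the stability parameter to the trivial one), and conclude by normality of the target (Theorem \ref{Thm-Norm}) together with Zariski's main theorem. The genuine difference lies in how the point bijection and the matching of stable objects with irreducible ones are obtained. The paper transports everything to multiplicative quiver varieties, $\mathcal{M}_B(\mathcal{C}')\cong\mathcal{M}(\mathbf{q}',\mathbf{d})$ and $\mathcal{M}_B(\mathbf{d},\bs\beta,\bs\xi')\cong\mathcal{M}_{\bs\theta}(\mathbf{q}',\mathbf{d})$, and argues with $\bs\theta$-(poly)stability: almost genericity of $\mathbf{q}'$ forces the dimension vector of every simple (resp.\ $\bs\theta$-stable) constituent to be a multiple of $\mathbf{d}_0$, hence of $\bs\theta$-degree zero, so semisimple representations are $\bs\theta$-polystable (surjectivity), while a $\bs\theta$-stable representation with decomposable semisimplification is impossible (injectivity, and simultaneously the statement ``$\bs\theta$-stable $=$ simple''). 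You stay on the Betti side and use two inputs: (a) pairwise distinctness of the $\xi'_{[j,i]}$ at each puncture, which yields semisimplicity of the monodromies and the canonical eigenspace filtration, hence an explicit set-theoretic inverse; and (b) the determinant identity $\mathbf{q}'^{\bs\gamma}=1$ for sub-local-systems, which is exactly the necessary condition (\ref{eq-qd=1}) applied to subobjects and plays the role that almost genericity of $\mathbf{q}'$ plays in the paper. Be aware that (a) is not a consequence of ``$\bs\xi'$ almost generic'' alone but of the construction of $\bs\xi'$ via $\mathbf{NH}_2$ from strictly increasing weights $\bs\alpha$; this is legitimate here, since the paper's own identification of $\mathcal{M}_B(\mathcal{C}')$ with $\mathcal{M}(\mathbf{q}',\mathbf{d})$ for the same quiver $Q$ presupposes the same distinctness through the construction of \S \ref{subset-MQV}. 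In substance the two routes are equivalent; yours buys a transparent inverse map and makes visible why no information is lost in forgetting the filtration, whereas the paper's quiver-side bookkeeping needs no separate discussion of eigenvalues and directly identifies the GIT points (polystable objects) on both sides, which your argument also has to address, and does, via the Jordan--H\"older/degree-zero remark.
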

\begin{proof}
Identify the Betti moduli spaces with multiplicative quiver varieties $\mathcal{M}(\mathbf{q}',\mathbf{d})$ and $\mathcal{M}_{\bs\theta}(\mathbf{q}',\mathbf{d})$. Suppose that $\mathbf{d}=m\mathbf{d}_0$ with $\mathbf{d}_0$ indivisible. The deformation parameter $\mathbf{q}'$ is almost generic by assumption. We will  show that $\mathbf{Iso}_3$ induces a bijection of $\mathbb{C}$-points. By Theorem \ref{Thm-Norm}, the target $\mathcal{M}(\mathbf{q}',\mathbf{d})$ is normal. Let $X\subset \mathcal{M}(\mathbf{q}',\mathbf{d})$ be a connected component and let $Y=\mathbf{Iso}_3^{-1}(X)$. Let us assume the bijection of points and show that $Y$ is isomorphic to $X$. The bijection implies that there is a connected component $Y_0$ of $Y$ that dominates $X$. However, the morphism $\mathbf{Iso}_3$ is projective since it is defined by degenerating stability conditions (or rather, as forgetting flags); therefore, $\mathbf{Iso}_3(Y_0)=X$ and thus $Y_0=Y$. It follows from the Zariski main theorem that the restriction of $\mathbf{Iso}_3$ to $Y$ is an isomorphism.

Let us show the surjectivity. Let $\rho_1\oplus\cdots\oplus\rho_r$ be a direct sum of simple representations $\rho_s$. Since $\mathbf{q}'$ is almost generic, the dimension vector $\mathbf{d}^{(s)}$ of each $\rho_s$ is a multiple of $\mathbf{d}_0$, and thus $\mathbf{d}^{(s)}\cdot\bs\theta=0$. It follows that $\rho_1\oplus\cdots\oplus\rho_r$ is a $\bs\theta$-polystable representation, whence surjectivity.

Next, we show the injectivity. Let $\rho_1\oplus\cdots\oplus\rho_r$ be as above, and suppose that $\rho'_1\oplus\cdots\oplus\rho'_t$ is a direct sum of $\bs\theta$-stable representations $\rho'_s$, which has $\rho_1\oplus\cdots\oplus\rho_r$ as its semi-simplification. Obviously, we have $t\le r$. Let us show that $t<r$ is not possible. The dimension vector of each $\rho'_s$ is also a multiple of $\mathbf{d}_0$. By replacing $\mathbf{d}$ by a smaller dimension vector if necessary, we may assume $t=1$; that is, the $\bs\theta$-stable representation $\rho'_1$ has $\rho_1\oplus\cdots\oplus\rho_r$ as its semi-simplification. However, we have $\mathbf{d}^{(s)}\cdot\bs\theta=0$ for any $s$, contradicting the stability of $\rho'_1$. We have shown that $t=r$. Then, each $\rho'_s$ is necessarily simple, and thus is isomorphic to some $\rho_{s'}$. The injectivity follows. We have also proved in the process that a $\bs\theta$-stable representation is necessarily simple, which proves the second statement of the proposition.
\end{proof}

\section{Generalisation and application}

\subsection{Dimension vectors of $\bs\theta$-stable representations}\label{subsec-Dim-theta-st}\hfill

The Deligne-Simpson problem can be regarded as the trivial stability case of the more general question about the existence of $\bs\theta$-stable representations. The techniques that we have used so far work in this generality as well. Let $Q$ be a star-shaped quiver defined by some tuple of integers $\underline{\nu}$ as in \S \ref{subsec-Comb-data}. Fix $\mathbf{q}\in(\mathbb{C}^{\ast})^{Q_0}$ and $\bs\theta\in\mathbb{R}^{Q_0}$, define $R_{\mathbf{q},\bs\theta}^+:=\{\mathbf{d}\in R^+|\mathbf{q}^{\mathbf{d}}=1,~\bs\theta\cdot\mathbf{d}=0\}$ and
\begingroup
\allowdisplaybreaks
\begin{align*}
\Sigma_{\mathbf{q},\bs\theta}:=\{\mathbf{d}\in R^+_{\mathbf{q},\bs\theta}\mid&\text{if $\mathbf{d}=\sum_{s=1}^r\mathbf{d}^{(s)}$ with $r\ge2$ and each $\mathbf{d}^{(s)}\in R^+_{\mathbf{q},\bs\theta}$,}\\
&\text{then $p(\mathbf{d})>\sum_{s=1}^rp(\mathbf{d}^{(s)})$}\}.
\end{align*}
\endgroup
For any $v\in Q_0$, define 
\begingroup
\allowdisplaybreaks
\begin{align*}
r_v(\bs\theta)&=(\theta_w-(e_v,e_w)\theta_w)_{w\in Q_0},\text{ and}\\
u_v(\mathbf{q})&=(q_v^{-(e_v,e_w)}q_w)_{w\in Q_0},
\end{align*}
\endgroup
for any $\bs\theta\in\mathbb{R}^{Q_0}$ and $\mathbf{q}\in(\mathbb{C}^{\ast})^{Q_0}$. The map $(\mathbf{q},\mathbf{d},\bs\theta)\mapsto(u_v(\mathbf{q}),s_v(\mathbf{d}),r_v(\bs\theta))$ is called an admissible reflection if either $\theta_v\ne 0$ or $q_v\ne 1$. According to Yamakawa \cite[Theorem 5.3]{Yama}, admissible reflections induce isomorphisms between multiplicative quiver varieties (his proof works beyond the stable loci).
\begin{Thm}\label{Thm-theta-st-dim}
For a star-shaped quiver, the following are equivalent:
\begin{itemize}
\item[(i)] There exists a $\bs\theta$-stable representation in $\mathcal{M}_{\bs\theta}(\mathbf{q},\mathbf{d})$.
\item[(ii)] The dimension vector $\mathbf{d}$ lies in $\Sigma_{\mathbf{q},\bs\theta}$.
\end{itemize}
\end{Thm}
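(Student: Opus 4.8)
The plan is to run the proof of Theorem \ref{ThmA} almost verbatim, systematically replacing ``simple representation'' by ``$\bs\theta$-stable representation'' and $\Sigma_{\mathbf{q}}$ by $\Sigma_{\mathbf{q},\bs\theta}$, the case $\bs\theta=0$ recovering Theorem \ref{ThmA} itself. The first reduction is by admissible reflections. The map $(\mathbf{q},\mathbf{d},\bs\theta)\mapsto(u_v(\mathbf{q}),s_v(\mathbf{d}),r_v(\bs\theta))$ is designed precisely so that it preserves $\mathbf{q}^{\mathbf{d}}$, the pairing $\bs\theta\cdot\mathbf{d}$ and the root system; hence it carries $R^+_{\mathbf{q},\bs\theta}$ and thus $\Sigma_{\mathbf{q},\bs\theta}$ to the corresponding sets for the reflected data, while by Yamakawa it induces an isomorphism of $\mathcal{M}_{\bs\theta}(\mathbf{q},\mathbf{d})$ preserving $\bs\theta$-stable points. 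Thus both (i) and (ii) are invariant under admissible reflections, and I may normalise $\bs\theta$ so that $\theta_v\ge 0$ for every $v\neq\star$, which is what $\mathbf{Iso}_2$ requires.

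The direction $(ii)\Rightarrow(i)$ is the more routine one, and I would obtain it from \cite{CBS} by the dimension-theoretic argument used at the end of the $(\mathbf{Aff}^{\infty})$ case in \S \ref{subsec-proof-ThmA}. A $\bs\theta$-stable representation is exactly a simple object of the abelian category of $\bs\theta$-semistable representations of slope zero, so its dimension vector lies in $R^+_{\mathbf{q},\bs\theta}$; conversely, for $\mathbf{d}\in\Sigma_{\mathbf{q},\bs\theta}$ the strict inequality $p(\mathbf{d})>\sum_s p(\mathbf{d}^{(s)})$ over all decompositions into $R^+_{\mathbf{q},\bs\theta}$-summands, combined with the dimension formula \cite[Lemma 7.1]{CBS}, forces every stratum of strictly $\bs\theta$-polystable representations to have dimension strictly below $2p(\mathbf{d})=\dim\mathcal{M}_{\bs\theta}(\mathbf{q},\mathbf{d})$, so the $\bs\theta$-stable locus cannot be empty.

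The substance is $(i)\Rightarrow(ii)$, which I would prove by contradiction exactly along the lines of \S \ref{subsec-proof-ThmA}. Assuming a $\bs\theta$-stable representation of some $\mathbf{d}\notin\Sigma_{\mathbf{q},\bs\theta}$, the $\bs\theta$-stable analogue of Schedler--Tirelli's classification (Theorem \ref{Thm-dimvec-ST}) forces $\mathbf{d}$, up to admissible reflection, into one of the families $(\mathbf{Aff})$ or $(\mathbf{Aff}^{\infty})$; here one checks that the offending decompositions use only summands of $R^+_{\mathbf{q},\bs\theta}$ (for $(\mathbf{Aff})$ the splitting $m\bs\delta=\sum l\bs\delta$ survives because $\bs\theta\cdot\mathbf{d}=0$ gives $\bs\theta\cdot\bs\delta=0$), so these $\mathbf{d}$ genuinely fail to lie in $\Sigma_{\mathbf{q},\bs\theta}$. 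I then feed the $\bs\theta$-stable point into the reduction diagram (\ref{eq-red-diag}), now entered directly at $\mathcal{M}_{\bs\theta}(\mathbf{q},\mathbf{d})$ with $\mathbf{Var}_0$ omitted: since $\mathbf{Iso}_2$, $\mathbf{NH}_1$, $\mathbf{Var}_2$ (Propositions \ref{Prop-Var2} and \ref{Prop-Var2-inf}), $\mathbf{NH}_2$ and $\mathbf{Iso}_3$ all preserve stable objects, the $\bs\theta$-stable representation yields a stable object at each stage and finally an irreducible local system in $\mathcal{M}_B(\mathcal{C}')$ by Proposition \ref{Prop-Iso3}. In case $(\mathbf{Aff})$ this contradicts Kostov's Theorem \ref{Thm-Kos}; in case $(\mathbf{Aff}^{\infty})$ the connectedness (Theorem \ref{Thm-Conn-gene}) and normality (Theorem \ref{Thm-Norm}) inputs give irreducibility of $\mathcal{M}_{\bs\theta}(\mathbf{q},\mathbf{d})$, and the dimension count against the $\bs\delta$-decomposition stratum rules out $\bs\theta$-stable points.

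The main obstacle is the very first step of the necessity argument: securing the Schedler--Tirelli dichotomy in the form that a $\bs\theta$-stable dimension vector outside $\Sigma_{\mathbf{q},\bs\theta}$ is, after admissible reflection, of type $(\mathbf{Aff})$ or $(\mathbf{Aff}^{\infty})$. For trivial stability this is \cite[Corollary 6.18]{ST}; for general $\bs\theta$ one must either extract it from the stability-dependent statements of \cite{ST} or reprove the reduction to isotropic and flat roots while keeping track of the slope condition $\bs\theta\cdot\mathbf{d}^{(s)}=0$ on the summands. A secondary, more bookkeeping difficulty is to guarantee that the reflections putting $\mathbf{d}$ into affine form can be taken compatibly with the normalisation $\theta_v\ge0$ for $v\neq\star$ needed by $\mathbf{Iso}_2$, and that the almost-generic weight $\bs\alpha$ of Definition \ref{defn-almost-gen-w} can still be chosen on the hyperplane $I^{\bs\tau(\underline{\nu})}(e,\tilde{\mathbf{d}})$ for the general $\bs\theta$; both are of the same nature as the checks already carried out in \S \ref{Sec-Reduction}, but they are where the generalisation must be verified with care.
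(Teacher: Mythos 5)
Your overall architecture does match the paper's (normalise $\bs\theta$ by admissible reflections, invoke Schedler--Tirelli, run the reduction diagram, conclude via Kostov or via connectedness plus a dimension count), and the ``main obstacle'' you flag at the end is actually a non-issue: \cite[Corollary 6.18]{ST} is already the stability-dependent statement, and Theorem \ref{Thm-dimvec-ST} is merely its $\bs\theta=0$ specialisation, so the paper simply cites the full statement. But there are two genuine gaps. The more serious one is your proof of (ii)$\Rightarrow$(i), which is circular. The stratum dimension formula (\cite[Lemma 7.1]{CBS}, or rather its $\bs\theta$-version \cite[Proposition 2.15]{ST}) computes the dimension of a representation-type stratum only \emph{if that stratum is nonempty}; it gives no lower bound on $\dim\mathcal{M}_{\bs\theta}(\mathbf{q},\mathbf{d})$ and says nothing about nonemptiness. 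If $\mathcal{M}_{\bs\theta}(\mathbf{q},\mathbf{d})$ were empty, or consisted only of strictly polystable strata, your inequality would produce no contradiction: the identity $\dim\mathcal{M}_{\bs\theta}(\mathbf{q},\mathbf{d})=2p(\mathbf{d})$ that your argument leans on presupposes the existence of stable points, which is precisely what is to be proved. The paper instead reduces, via \cite[Proposition 2.19]{ST}, to showing that $\mathcal{M}_{\bs\theta}(\mathbf{q},\mathbf{d})$ is \emph{nonempty}, and proves nonemptiness by running the chain (\ref{eq-Thm-theta-st-dim}) backwards: it lands in a character variety $\mathcal{M}(\mathbf{q}',\mathbf{d})$ with $\mathbf{q}'$ almost generic and $\mathbf{d}\in\Sigma_{\mathbf{q}'}$, whose nonemptiness is \cite[Theorem 1.3]{CB04}, and transports points back through $\mathbf{Iso}_3$, $\mathbf{NH}_2$, $\mathbf{Var}_2$, $\mathbf{NH}_1$ and the variation morphism. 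This also requires a preliminary reduction, by admissible reflections, to the case $d_{[j,i]}>d_{[j,i+1]}$ (with a separate argument when $q_{[j,i_0]}=1$ and $\theta_{[j,i_0]}=0$ at a repeated value), which your proposal does not address.

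The second gap is in (i)$\Rightarrow$(ii): you propose to enter the diagram ``directly at $\mathcal{M}_{\bs\theta}(\mathbf{q},\mathbf{d})$ with $\mathbf{Var}_0$ omitted''. This does not work, because $\mathbf{Iso}_2$ (Yamakawa's identification) requires $\theta_v>0$ for all $v\ne\star$ \emph{strictly} --- not $\theta_v\ge0$ as you state --- whereas the reflection normalisation only achieves $\theta_{[j,i]}\ge0$; moreover the downstream steps (the choice of almost generic $\bs\alpha$, Lemma \ref{Lem-Betti-Dol-indiv}, Kostov's hypotheses) are calibrated to an (almost) generic stability parameter, and the given $\bs\theta$ need not be one. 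The paper therefore retains $\mathbf{Var}_0$, in the modified form of a wall-crossing morphism $\mathcal{M}_{\tilde{\bs\theta}}(\mathbf{q},\mathbf{d})\rightarrow\mathcal{M}_{\bs\theta}(\mathbf{q},\mathbf{d})$, where $\tilde{\bs\theta}$ is generic (case ($\mathbf{Aff}^{\infty}$)) or almost generic (case ($\mathbf{Aff}$)) in a small neighbourhood of $\bs\theta$ and satisfies $\tilde{\theta}_{[j,i]}>0$ for $i>0$; well-definedness and the lifting of $\bs\theta$-stable points to $\tilde{\bs\theta}$-stable points are verified by a wall-crossing argument parallel to Proposition \ref{Prop-Var2}. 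Without this perturbation step your reduction cannot start, so the omission of $\mathbf{Var}_0$ is not a simplification but a hole.
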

\begin{proof}
 Up to a sequence of admissible reflections, we may assume that $\theta_{[j,i]}\ge0$ for any $j$ and any $i>0$. To obtain such a $\theta$, we may begin by considering the vertex $[j,\nu_j]$ for some $j$. If $\theta_{[j,\nu_j]}<0$, then we apply the reflection $r_{[j,\nu_j]}$. If $\theta_{[j,i]}\ge0$ for $i_1<i\le\nu_j$ and $\theta_{[j,i_1]}<0$, then we may apply a sequence of admissible reflections $r_{[j,i_2]}\cdots r_{[j,i_1+1]} r_{[j,i_1]}$ for some $i_1\le i_2\le \nu_j$ so that the resulting stability parameter has nonnegative value at $[j,i]$ for every $i_1\le i\le\nu_j$. If $d_{[j,i]}>d_{[j,i+1]}$ for any $[j,i]$, then these inequalities  are preserved under reflections; moreover, if $\mathbf{d}$ is indivisible, then it remains so under reflections at $[j,i]$ for $i>0$, since such a reflection permutes $d_{[j,i]}-d_{[j,i+1]}$ and $d_{[j,i-1]}-d_{[d,i]}$.

We first show (i)$\Rightarrow$(ii) in parallel with previous sections. The first step is again Schedler-Tirelli's classification result. Theorem \ref{Thm-dimvec-ST} is in fact the $\bs\theta=0$ version of \cite[Corollary 6.18]{ST}. The full statement says that if (i) is true, then one of the following occurs:
\begin{itemize}
\item $\mathbf{d}\in\Sigma_{\mathbf{q},\bs\theta}$.
\item[($\mathbf{Aff}$)] $\mathbf{d}\in\mathbb{Z}_{\ge2}\cdot\Sigma_{\mathbf{q},\bs\theta}^{iso}$, where $\Sigma_{\mathbf{q},\bs\theta}^{iso}\subset\Sigma_{\mathbf{q},\bs\theta}$ is the subset of isotropic imaginary roots.
\item[($\mathbf{Aff}^{\infty}$)] $\mathbf{d}=e_{\infty}+m\bs\delta$ is a flat root and $m\ge2$. Moreover, we have $q_{\infty}=\mathbf{q}^{\bs\delta}=1$ and $\theta_{0,\infty}=\bs\theta\cdot\bs\delta=0$.
\end{itemize} 
We need to rule out vectors of type ($\mathbf{Aff}$) and ($\mathbf{Aff}^{\infty}$). Note that after adjusting $\bs\theta$ as above, we may not have $d_{\infty}=1$ in case ($\mathbf{Aff}^{\infty}$), but $\mathbf{d}$ is nevertheless indivisible. We assume that there is a $\bs\theta$-stable representation $\rho_s\in\mathcal{M}_{\bs\theta}(\mathbf{q},\mathbf{d})$ in these cases and prove by contradiction. Let $\tilde{\bs\theta}$ be a generic (resp. almost generic) stability condition in a small neighbourhood of $\bs\theta$ in case ($\mathbf{Aff}^{\infty}$) (resp. ($\mathbf{Aff}$)) which also satisfies $\tilde{\theta}_{[j,i]}>0$ as long as $i>0$. Here, we say that $\tilde{\bs\theta}$ is almost generic if the only dimension vectors $\mathbf{d}'\le\mathbf{d}$ satisfying $\tilde{\bs\theta}\cdot\mathbf{d}'=0$ are $\mathbb{Q}$-multiples of $\mathbf{d}$. Then, there is a well-defined morphism
$$
\mathcal{M}_{\bs\theta}(\mathbf{q},\mathbf{d})\stackrel{\mathbf{Var}_0}{\longleftarrow}  \mathcal{M}_{\tilde{\bs\theta}}(\mathbf{q},\mathbf{d}) 
$$
such that $\rho_s$ lifts to a $\tilde{\bs\theta}$-stable representaion $\rho'_s\in\mathcal{M}_{\tilde{\bs\theta}}(\mathbf{q},\mathbf{d})$. The existence of such a morphism is clear if we are in case ($\mathbf{Aff}^{\infty}$) and $\tilde{\bs\theta}$ is generic; in case ($\mathbf{Aff}$), the argument is similar to the proof of Proposition \ref{Prop-Var2}, for which we give some details below. We need to check that (1) every $\tilde{\bs\theta}$-semi-stable representation is $\bs\theta$-semi-stable and (2) every $\bs\theta$-stable representation is $\tilde{\bs\theta}$-stable. Suppose that $\rho$ is $\tilde{\bs\theta}$-semi-stable but not $\bs\theta$-semi-stable. Let $\rho_1\subset\rho$ be a subrepresentation with dimension vector $\mathbf{d}_1$ satisfying $\bs\theta\cdot\mathbf{d}_1>0$. However, we have $\tilde{\bs\theta}\cdot\mathbf{d}_1\le0$. If $\tilde{\bs\theta}\cdot\mathbf{d}_1<0$, then the interval connecting $\bs\theta$ and $\tilde{\bs\theta}$ meets a wall, contradicting the assumption that $\tilde{\bs\theta}$ is (almost) generic and lies in a small neighbourhood of $\bs\theta$. If $\tilde{\bs\theta}\cdot\mathbf{d}_1=0$, then $\mathbf{d}_1$ is a $\mathbb{Q}$-multiple of $\mathbf{d}$ since $\tilde{\bs\theta}$ is almost generic. But this would imply that $\bs\theta\cdot\mathbf{d}_1=0$, contradicting the assumption $\bs\theta\cdot\mathbf{d}_1>0$, and thus (1) follows. The proof of (2) is similar. In case ($\mathbf{Aff}^{\infty}$), the morphism $\mathbf{Var}_0$ is in fact a resolution by \cite[Theorem 6.23]{ST}. The rest of the proof proceeds exactly as in \S \ref{subsec-proof-ThmA}, replacing the dimension formula of \cite[Lemma 7.1]{CBS} by \cite[Proposition 2.15]{ST}.

Now, we prove (ii)$\Rightarrow$(i). The assumption $\mathbf{d}\in\Sigma_{\mathbf{q},\bs\theta}$ says in particular that $\mathbf{d}$ is a positive root. Now, a positive root supported on a star-shaped quiver is either a real root supported on a leg or satisfies $d_{\star}>0$ and $d_{[j,i]}\ge d_{[j,i+1]}$ for any $[j,i]$. If $\mathbf{d}$ is supported on a leg, then the equivalence between (i) and (ii) is clear. We assume $d_{\star}>0$ in what follows. We may further assume $d_{[j,i]}> d_{[j,i+1]}$ for any $[j,i]$ after the following reduction procedure. Suppose that $[j,i_0]$ is such that $d_{[j,i_0-1]}=d_{[j,i_0]}$ and $d_{[j,i_0]}>d_{[j,i_0+1]}$. If $q_{[j,i_0]}\ne 1$ or $\theta_{[j,i_0]}\ne 0$, then we may apply an admissible reflection at $[j,i_0]$ so that the new dimension vector $\mathbf{d}'$ satisfies $d'_{[j,i_0]}=d_{[j,i_0+1]}$. An induction on the maximal possible $d\in\mathbb{Z}$ such that $d=d_{[j,i]}=d_{[d,i+1]}$ for some $[j,i]$ will result in a $\mathbf{d}$ with $d_{[j,i]}> d_{[j,i+1]}$ for all $[j,i]$. If there happen to be some $[j,i_0]$ as above but with $q_{[j,i_0]}=1$ and $\theta_{[j,i_0]}=0$, then it is easy to see that $\mathbf{d}\notin\Sigma_{\mathbf{q},\bs\theta}$ in this case. 

In view of \cite[Proposition 2.19]{ST}, it suffices to show that $\mathcal{M}_{\bs\theta}(\mathbf{q},\mathbf{d})$ is nonempty. The argument in the previous paragraph shows that there is a well-defined morphism $\mathcal{M}_{\tilde{\bs\theta}}(\mathbf{q},\mathbf{d})\rightarrow\mathcal{M}_{\bs\theta}(\mathbf{q},\mathbf{d})$ for some almost generic $\tilde{\bs\theta}$ such that $\tilde{\theta}_{[j,i]}>0$ for all $i>0$ (the construction does not use any particular properties of ($\mathbf{Aff}$) or ($\mathbf{Aff}^{\infty}$)). The variety $\mathcal{M}_{\tilde{\bs\theta}}(\mathbf{q},\mathbf{d})$ is isomorphic to a Betti moduli space $\mathcal{M}_B(\mathbf{d},\bs\beta,\bs\xi)$, where $\bs\beta$ and $\bs\xi$ are determined by $\tilde{\bs\theta}$ and $\mathbf{q}$ respectively. As in (\ref{eq-red-diag}), we consider the following sequence of morphisms
\begin{equation}\label{eq-Thm-theta-st-dim}
\mathcal{M}_B(\mathbf{d},\bs\beta,\bs\xi)\stackrel{\mathbf{NH}_1}{\rightarrow}\mathcal{M}_{Dol}(\mathbf{d}',\bs\alpha',(\mathcal{O}'_j)_j)\stackrel{\mathbf{Var}_2}{\leftarrow} \mathcal{M}_{Dol}(\tilde{\mathbf{d}},\bs\alpha,(\mathcal{O}_j)_j)\stackrel{\mathbf{NH}_2}{\leftarrow}  \mathcal{M}_B(\mathbf{d},\bs\beta,\bs\xi')\stackrel{\mathbf{Iso}_3}{\rightarrow} \mathcal{M}_B(\mathcal{C}').
\end{equation}
As before, $\mathbf{NH}_1$ and $\mathbf{NH}_2$ are nonabelian Hodge correspondences, $\mathbf{Var}_2$ is given by Proposition \ref{Prop-Var2}, and $\mathbf{Iso}_3$ is given by Proposition \ref{Prop-Iso3}. We have $\mathcal{M}_B(\mathcal{C}')\cong\mathcal{M}(\mathbf{q}',\mathbf{d})$ for some almost generic $\mathbf{q}'$ determined by $\bs\xi'$. By assumption, $\mathbf{d}$ is a root and lies in $\Sigma_{\mathbf{q}'}$. It follows from \cite[Theorem 1.3]{CB04} that $\mathcal{M}(\mathbf{q}',\mathbf{d})$ is nonempty; therefore, $\mathcal{M}_{\bs\theta}(\mathbf{q},\mathbf{d})$ is nonempty. This completes the proof of the theorem.
\end{proof}

\subsection{Connectedness of character varieties with nongeneric monodromies}\hfill

As is mentioned in \S \ref{subsec-conn-char}, the connectedness of character varieties with generic monodromies has been established in the literature. We prove below instances of connected character varieties without the genericity assumption. This result will be used in the next subsection to study the decomposition of character varieties. 

\begin{Prop}\label{Prop-Conn-nongen}
Let $\mathcal{M}_{\bs\theta}(\mathbf{q},\mathbf{d})$ be a nonempty multiplicative quiver variety for some star-shaped quiver $Q$. Suppose that $d_{[j,i]}> d_{[j,i+1]}$ for any $j$ and any $i\ge0$, that $\mathbf{d}\in\Sigma_{\mathbf{q},\bs\theta}$ and $\mathbf{d}=m\mathbf{d}_0$ for some indivisible dimension vector $\mathbf{d}_0$, and that $\mathbf{q}^{\mathbf{d}_0}$ is an $m$-th primitive root of unity. Then, the variety $\mathcal{M}_{\bs\theta}(\mathbf{q},\mathbf{d})$ is connected.
\end{Prop}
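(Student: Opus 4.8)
The plan is to run the reduction diagram (\ref{eq-red-diag}) one more time, exploiting that the hypothesis ``$\mathbf{q}^{\mathbf{d}_0}$ is a primitive $m$-th root of unity'' forces $l=m$, so that after varying the parabolic weights the problem becomes \emph{genuinely generic}. The only new feature compared with the $(\mathbf{Aff}^{\infty})$ argument of \S\ref{subsec-proof-ThmA} is that the target $\mathcal{M}_{\bs\theta}(\mathbf{q},\mathbf{d})$ may now carry strictly semistable points, so $\mathbf{Var}_2$ need not be bijective. First I would normalise the data: applying admissible reflections at the non-central vertices (Yamakawa \cite[Theorem 5.3]{Yama}, \S\ref{subsec-Dim-theta-st}) I may assume $\theta_{[j,i]}>0$ for every $i>0$; these are isomorphisms of multiplicative quiver varieties and they preserve the hypotheses $d_{[j,i]}>d_{[j,i+1]}$, the indivisibility of $\mathbf{d}_0$, the value $\mathbf{q}^{\mathbf{d}_0}$, and membership in $\Sigma_{\mathbf{q},\bs\theta}$. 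Via $\mathbf{Iso}_2$ (\cite[Theorem 1.2]{Yama}) and $\mathbf{NH}_1$ (Theorem \ref{Thm-NHT}) I identify $\mathcal{M}_{\bs\theta}(\mathbf{q},\mathbf{d})$ homeomorphically with $\mathcal{M}_{Dol}(\mathbf{d}',\bs\alpha',(\mathcal{O}'_j)_j)$, so it suffices to prove the latter connected.

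Since $l=m$, Lemma \ref{Lem-Betti-Dol-indiv} together with Remark \ref{Rem-Betti-Dol-indiv} shows that $(e,\tilde{\mathbf{d}})$ is indivisible, so a genuinely \emph{generic} parabolic weight $\bs\alpha$ exists. I would choose such an $\bs\alpha$ close to $\sigma_{\ast}\bs\alpha'$ and off every wall, so that Proposition \ref{Prop-Var2} applies and produces $\mathbf{Var}_2$. By Lemma \ref{Lem-almost-Dol-to-Betti} the resulting eigenvalues $\bs\xi'$ are then generic, whence $\mathbf{Iso}_3$ is an isomorphism (Proposition \ref{Prop-Iso3}) onto a character variety $\mathcal{M}_B(\mathcal{C}')$ with generic semisimple conjugacy classes, which is connected by Theorem \ref{Thm-Conn-gene}. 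Transporting back along $\mathbf{NH}_2$, the source $\mathcal{M}_{Dol}(\tilde{\mathbf{d}},\bs\alpha,(\mathcal{O}_j)_j)$ is connected; and because $\bs\alpha$ is generic with $(e,\tilde{\mathbf{d}})$ indivisible, semistability coincides with stability there, so every point of the source is $\bs\alpha$-stable.

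It then remains to deduce connectedness of the target from that of the source. The morphism $\mathbf{Var}_2$ is projective, being a wall-crossing of parabolic weights in the sense of variation of GIT (cf.\ \cite{BH}, \cite{T02}); hence its image is closed, and as the continuous image of a connected space it is connected. By Proposition \ref{Prop-Var2}(ii) every $\bs\alpha'$-stable point of the target lies in this image, so it suffices to know that the $\bs\alpha'$-stable locus is \emph{dense} in $\mathcal{M}_{Dol}(\mathbf{d}',\bs\alpha',(\mathcal{O}'_j)_j)\cong\mathcal{M}_{\bs\theta}(\mathbf{q},\mathbf{d})$: then the closed image contains a dense subset, hence equals the whole target, which is therefore connected. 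Now by Theorem \ref{Thm-theta-st-dim} the $\bs\theta$-stable locus is nonempty of dimension $2p(\mathbf{d})$, while the defining inequality of $\Sigma_{\mathbf{q},\bs\theta}$ combined with the dimension formula \cite[Lemma 7.1]{CBS} (in its $\bs\theta$-version \cite[Proposition 2.15]{ST}) shows that each strictly semistable stratum, indexed by a decomposition $\mathbf{d}=\sum_s\mathbf{d}^{(s)}$ into elements of $R^+_{\mathbf{q},\bs\theta}$, has dimension $\sum_s 2p(\mathbf{d}^{(s)})<2p(\mathbf{d})$.

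I expect the main obstacle to be precisely this density, equivalently the equidimensionality of $\mathcal{M}_{\bs\theta}(\mathbf{q},\mathbf{d})$: one must rule out a connected component consisting entirely of strictly semistable, hence lower-dimensional, points. This is where the flatness of the multiplicative moment map enters, and it is the technically delicate input, to be extracted from the Schedler--Tirelli analysis \cite{ST}; the subtlety I anticipate is that $\Sigma_{\mathbf{q},\bs\theta}$ controls only decompositions inside $R^+_{\mathbf{q},\bs\theta}$, whereas the flatness criterion a priori concerns all decompositions in $R^+_{\mathbf{q}}$, so some care is needed to restrict attention to the $\bs\theta$-semistable locus. Once equidimensionality is granted the stable locus is dense and the argument closes; normality (Theorem \ref{Thm-Norm}) then moreover identifies the single connected component with an irreducible component, which is the form in which the conclusion will be used in the next subsection.
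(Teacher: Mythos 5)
Your reduction along the chain $\mathbf{NH}_1$, $\mathbf{Var}_2$, $\mathbf{NH}_2$, $\mathbf{Iso}_3$ and the appeal to Theorem \ref{Thm-Conn-gene} reproduce the paper's mechanism for proving connectedness of the \emph{generic-weight} moduli spaces, and your observation that the hypothesis on $\mathbf{q}^{\mathbf{d}_0}$ forces $(e,\tilde{\mathbf{d}})$ to be indivisible (Lemma \ref{Lem-Betti-Dol-indiv}) is exactly the right use of that hypothesis. But the proposal does not close: your last step, transferring connectedness from the stable-objects moduli down to $\mathcal{M}_{\bs\theta}(\mathbf{q},\mathbf{d})$, rests on the density of the stable locus in the target, and you explicitly leave that density unproven (``to be extracted from the Schedler--Tirelli analysis''). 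The dimension count you give --- each strictly semistable stratum has dimension $\sum_s 2p(\mathbf{d}^{(s)})<2p(\mathbf{d})$ --- does not rule out a connected component consisting entirely of such strata, which is precisely the point at issue; so as written this is a genuine gap, not a routine verification. A secondary unproven assertion is the projectivity of $\mathbf{Var}_2$ between Dolbeault moduli spaces: the paper only establishes the analogous properness for $\mathbf{Iso}_3$ on the Betti side, and it even remarks that normality of these Dolbeault spaces is not available in the literature, so importing VGIT properness statements there is not innocuous.

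The paper closes this gap by working in a different order: it never tries to hit $\mathcal{M}_{\bs\theta}(\mathbf{q},\mathbf{d})$ from the Dolbeault side at all. The hypothesis ``$\mathbf{d}=m\mathbf{d}_0$ with $\mathbf{q}^{\mathbf{d}_0}$ a primitive $m$-th root of unity'' is exactly $q$-indivisibility in the sense of \cite{ST}, and \cite[Theorem 6.23]{ST} then says that for an almost generic $\tilde{\bs\theta}$ in a small neighbourhood of $\bs\theta$ (chosen with $\tilde{\theta}_{[j,i]}>0$ for $i>0$), variation of stability is a \emph{resolution} $\mathbf{Var}_0:\mathcal{M}_{\tilde{\bs\theta}}(\mathbf{q},\mathbf{d})\rightarrow\mathcal{M}_{\bs\theta}(\mathbf{q},\mathbf{d})$ --- in particular proper and surjective, with no density argument needed. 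One then proves connectedness of $\mathcal{M}_{\tilde{\bs\theta}}(\mathbf{q},\mathbf{d})$ by your chain (\ref{eq-Thm-theta-st-dim}), where now every object in sight is stable, $\mathbf{Var}_2$ is bijective by Proposition \ref{Prop-Var2}, and $\mathbf{Iso}_3$ is an isomorphism by Lemma \ref{Lem-almost-Dol-to-Betti} and Proposition \ref{Prop-Iso3}; connectedness of $\mathcal{M}_{\bs\theta}(\mathbf{q},\mathbf{d})$ follows as the continuous surjective image of a connected space. If you want to repair your write-up with minimal change, replace the final density/equidimensionality step by this citation of \cite[Theorem 6.23]{ST}: the surjectivity you need is precisely what $q$-indivisibility buys, and it is the reason that hypothesis appears in the statement.
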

\begin{Rem}
The most general case allows $\mathbf{q}^{\mathbf{d}_0}$ to be an $l$-th primitive root of unity with $l<m$.
\end{Rem}
\begin{proof}
As in the proof of Theorem \ref{Thm-theta-st-dim}, we may assume that $\theta_{[j,i]}\ge0$ for any $j$ and any $i>0$. The vector $\mathbf{d}$ as in the statement of the proposition is called $q$-indivisible in \cite{ST}. By \cite[Theorem 6.23]{ST}, for an almost generic $\tilde{\bs\theta}$ in a small neighbourhood of $\bs\theta$, variation of stability defines a resolution $\mathbf{Var}_0:\mathcal{M}_{\tilde{\bs\theta}}(\mathbf{q},\mathbf{d})\longrightarrow  \mathcal{M}_{\bs\theta}(\mathbf{q},\mathbf{d})$. The stability condition $\tilde{\bs\theta}$ can be chosen in such a way that $\tilde{\theta}_{[j,i]}>0$ as long as $i>0$. Again, we use the morphisms (\ref{eq-Thm-theta-st-dim}) to show that $\mathcal{M}_{\tilde{\bs\theta}}(\mathbf{q},\mathbf{d})$ is connected. Since $\mathcal{M}_{\tilde{\bs\theta}}(\mathbf{q},\mathbf{d})$ consists of stable representations, the space $\mathcal{M}_{Dol}(\mathbf{d}',\bs\alpha',(\mathcal{O}'_j)_j)$ consists of stable parabolic Higgs bundles. We may choose $\bs\alpha$ to be generic so that $\mathcal{M}_{Dol}(\tilde{\mathbf{d}},\bs\alpha,(\mathcal{O}_j)_j)$ also consists of stable parabolic Higgs bundles. Indeed, our assumptions on $\mathbf{d}$ and $\mathbf{q}$ imply that $(e,\tilde{\mathbf{d}})$ is indivisible by Lemma \ref{Lem-Betti-Dol-indiv}, where $e$ is the degree of the underlying vector bundles in these moduli spaces. By Proposition \ref{Prop-Var2}, $\mathbf{Var}_2$ is bijective. Now, the eigenvalues $\bs\xi'$ are generic by Lemma \ref{Lem-almost-Dol-to-Betti} and our choice of $\bs\alpha$; therefore, $\mathbf{Iso}_3$ is an isomorphism by Proposition \ref{Prop-Iso3}. Finally, the morphisms (\ref{eq-Thm-theta-st-dim}) combined with Theorem \ref{Thm-Conn-gene} show that $\mathcal{M}_{\tilde{\bs\theta}}(\mathbf{q},\mathbf{d})$ is connected.
\end{proof}

\subsection{Decomposition of character varieties}\hfill

We will prove in this subsection the multiplicative counterpart of Crawley-Boevey's decomposition of additive quiver varieties \cite{CB02}, but only for those that are isomorphic to character varieties for $\mathbb{P}^1$; this also refines the decomposition proved by Schedler-Tirelli (see \cite[\S 6.3]{ST}).

\begin{Thm}\label{Thm-Sym-iso}
Let $\mathbf{d}\in\Sigma_{\mathbf{q},\bs\theta}$ be an isotropic imaginary root. Then, taking direct sums induces an isomorphism
$$
\psi:S^{m'}\mathcal{M}_{\bs\theta}(\mathbf{q},\mathbf{d})\lisom\mathcal{M}_{\bs\theta}(\mathbf{q},m'\mathbf{d}),
$$
where $S^{m'}\mathcal{M}_{\bs\theta}(\mathbf{q},\mathbf{d})$ is the symmetric product of $\mathcal{M}_{\bs\theta}(\mathbf{q},\mathbf{d})$. In particular, $\mathcal{M}_{\bs\theta}(\mathbf{q},m'\mathbf{d})$ is connected.
\end{Thm}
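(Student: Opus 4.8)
The plan is to reduce to the case where $\mathbf{d}=\bs\delta$ is the minimal positive imaginary root of an affine Dynkin diagram $Q$, and then to identify $\mathcal{M}_{\bs\theta}(\mathbf{q},m'\bs\delta)$ with the set of direct sums of $m'$ simple pieces of dimension $\bs\delta$. Since $\mathbf{d}\in\Sigma_{\mathbf{q},\bs\theta}$ is isotropic, its support is affine Dynkin and, after a sequence of admissible reflections (\S\ref{subsec-Dim-theta-st}), which induce isomorphisms of multiplicative quiver varieties and are compatible with symmetric products, I may assume $\mathbf{d}=\bs\delta$; note that then $\mathbf{q}^{\bs\delta}=1$ and $\bs\theta\cdot\bs\delta=0$. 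Taking direct sums of $\bs\theta$-polystable representations (all of slope zero) again produces a $\bs\theta$-polystable representation, so there is a natural morphism $\psi\colon S^{m'}\mathcal{M}_{\bs\theta}(\mathbf{q},\bs\delta)\to\mathcal{M}_{\bs\theta}(\mathbf{q},m'\bs\delta)$. By Theorem~\ref{Thm-Norm} the surface $\mathcal{M}_{\bs\theta}(\mathbf{q},\bs\delta)$ is normal, and it is connected by Proposition~\ref{Prop-Conn-nongen} (the case $m'=1$), hence irreducible; consequently $S^{m'}\mathcal{M}_{\bs\theta}(\mathbf{q},\bs\delta)$ is normal and irreducible of dimension $2m'$.

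Next I would pin down which dimension vectors can occur as $\bs\theta$-stable summands of a representation of dimension $m'\bs\delta$. A connected proper subgraph of an affine Dynkin diagram is of finite type, so the only imaginary roots $\le m'\bs\delta$ supported on $Q$ are the multiples $k\bs\delta$ with $1\le k\le m'$; and since $p(k\bs\delta)=1<k=\sum_{s=1}^{k}p(\bs\delta)$ for $k\ge 2$, the vector $k\bs\delta$ lies outside $\Sigma_{\mathbf{q},\bs\theta}$, so by Theorem~\ref{Thm-theta-st-dim} there is no $\bs\theta$-stable representation of dimension $k\bs\delta$ for $k\ge 2$. Thus every $\bs\theta$-stable summand has dimension either $\bs\delta$ or a real root. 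Using the dimension formula \cite[Lemma 7.1]{CBS} (equivalently \cite[Proposition 2.15]{ST}), the stratum of a representation type with distinct summand dimensions $\mathbf{d}^{(1)},\dots,\mathbf{d}^{(r)}$ has dimension $\sum_{s}2p(\mathbf{d}^{(s)})$; the real summands contribute $0$, and writing the imaginary ones as $k_t\bs\delta$ the coefficientwise inequality $\sum_t a_t k_t\le m'$ forces the number of distinct imaginary types to be at most $m'$. Hence every stratum has dimension at most $2m'$, with equality exactly for the stratum of $m'$ pairwise non-isomorphic $\bs\delta$-stable summands.

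The morphism $\psi$ is injective on the dense locus of tuples of pairwise non-isomorphic stable $\bs\delta$-representations, by uniqueness of the decomposition of a polystable object into stable summands; so $\psi$ is birational onto its image, which is the closure of the top stratum above, a $2m'$-dimensional irreducible closed subvariety of $\mathcal{M}_{\bs\theta}(\mathbf{q},m'\bs\delta)$. Granting that $\psi$ is moreover surjective, the argument of Proposition~\ref{Prop-Iso3} applies essentially verbatim: $\psi$ is a bijection of $\mathbb{C}$-points from the normal irreducible source to the normal target (Theorem~\ref{Thm-Norm}), so the Zariski main theorem shows that $\psi$ is an isomorphism. The connectedness of $\mathcal{M}_{\bs\theta}(\mathbf{q},m'\bs\delta)$ then follows immediately, since $S^{m'}\mathcal{M}_{\bs\theta}(\mathbf{q},\bs\delta)$ is connected.

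The hard part is precisely the surjectivity of $\psi$, equivalently the assertion that every $\bs\theta$-polystable representation $V=\bigoplus_s T_s$ of dimension $m'\bs\delta$ regroups as a direct sum of $m'$ polystable pieces of dimension $\bs\delta$. By the second paragraph the imaginary summands $T_s$ already have dimension $\bs\delta$, so the remaining issue is combinatorial: the real-root summands have dimensions $\alpha_u$ with $\sum_u\alpha_u=c\bs\delta$ for some integer $c$, and I must partition them into $c$ blocks each summing to $\bs\delta$. Here I would exploit the affine structure: every positive real root in $R^+_{\mathbf{q},\bs\theta}$ below $m'\bs\delta$ has the form $\beta+k\bs\delta$ with $\beta$ a root of the underlying finite-type system, and since $\bs\delta$ spans the radical of $(-,-)$ one gets $\mathbf{q}^{\beta}=1$ and $\bs\theta\cdot\beta=0$; the relation $\sum_u\alpha_u=c\bs\delta$ forces $\sum_u\beta_u=0$, after which a matching argument on the finite root system produces the blocks. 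Alternatively, and this is the route I would ultimately prefer, I would bypass the regrouping by deducing the irreducibility of $\mathcal{M}_{\bs\theta}(\mathbf{q},m'\bs\delta)$ directly from the flatness of the multiplicative moment map in the isotropic direction $m'\bs\delta$ (\cite[Proposition 4.2]{Su}, as exploited in \cite[Theorem 6.16]{ST}) together with the normality of Theorem~\ref{Thm-Norm}; irreducibility combined with the dimension count then forces the image of $\psi$ to exhaust the target, giving surjectivity.
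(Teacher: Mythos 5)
There is a genuine gap, and it occurs in your very first step. An isotropic imaginary root $\mathbf{d}\in\Sigma_{\mathbf{q},\bs\theta}$ is of the form $\mathbf{d}=l\bs\delta$ where $\mathbf{q}^{\bs\delta}$ is a \emph{primitive $l$-th root of unity}, and the case $l\ge 2$ genuinely occurs --- it is exactly the case needed when this theorem is applied in step (5) of the proof of Theorem \ref{Thm-dec-char}, and it is the situation arising in case ($\mathbf{Aff}$) of Theorem A. Your claim that admissible reflections let you assume $\mathbf{d}=\bs\delta$ and $\mathbf{q}^{\bs\delta}=1$ is false: since $\bs\delta$ spans the radical of the affine bilinear form, $(l\bs\delta,e_v)=0$ for every vertex $v$, so every simple reflection fixes $l\bs\delta$; likewise $u_v(\mathbf{q})^{\bs\delta}=q_v^{-(e_v,\bs\delta)}\mathbf{q}^{\bs\delta}=\mathbf{q}^{\bs\delta}$, so admissible reflections change neither the dimension vector $l\bs\delta$ nor the root of unity $\mathbf{q}^{\bs\delta}$. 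Everything downstream in your argument uses $\mathbf{q}^{\bs\delta}=1$ (the classification of stable summands, the deduction $\mathbf{q}^{\beta}=1$ for finite roots $\beta$, the regrouping into $\bs\delta$-blocks), so as written your proof only treats $l=1$. The paper instead keeps $\mathbf{d}=l\bs\delta$ throughout: the elements of $\Sigma_{\mathbf{q},\bs\theta}$ below $m'l\bs\delta$ are shown to be real roots or $l\bs\delta$ itself, and the regrouping is into $l\bs\delta$-blocks (Lemma \ref{Lem-min-admi-m'ldelta}).

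The second gap is the surjectivity of $\psi$, which you correctly identify as the crux but do not actually prove. Your route (a) is the right idea --- it is precisely the paper's Lemma \ref{Lem-min-admi-m'ldelta}, the multiplicative analogue of \cite[Lemma 3.2]{CB02} --- but the ``matching argument on the finite root system'' is exactly the inductive content of that lemma, and you leave it unproven. Your preferred route (b) is circular: irreducibility of $\mathcal{M}_{\bs\theta}(\mathbf{q},m'\mathbf{d})$ amounts to connectedness plus normality, and connectedness of this target is part of what the theorem asserts; Proposition \ref{Prop-Conn-nongen} does \emph{not} apply to $m'\mathbf{d}$ (for $m'\ge2$ one has $m'\mathbf{d}\notin\Sigma_{\mathbf{q},\bs\theta}$ and $\mathbf{q}^{\bs\delta}$ is not a primitive $(m'l)$-th root of unity), and \cite[Proposition 4.2]{Su} is a root-theoretic inequality, not a connectedness statement. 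In the paper the logical order is the reverse of yours: connectedness of the target is \emph{deduced from} surjectivity of $\psi$ and connectedness of the source. Moreover, even granting irreducibility of the target, a dominant quasi-finite morphism need not be surjective without properness or finiteness of $\psi$ (compare $\mathbb{A}^2\setminus\{0\}\hookrightarrow\mathbb{A}^2$), so ``dimension count forces the image to exhaust the target'' does not follow. Once surjectivity is in hand, your endgame is essentially the paper's (birationality on the open stratum of pairwise non-isomorphic stable summands, then Zariski's main theorem against the normal target), with the small caveat that you should invoke the quasi-finite birational surjective version of ZMT rather than claim a bijection on points, since injectivity off the dense stratum was never established.
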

\begin{proof}
Write $\mathbf{d}=l\bs\delta$, where $\bs\delta$ is the minimal positive imaginary root of the supporting affine Dynkin diagram. The condition $\mathbf{d}\in\Sigma_{\mathbf{q},\bs\theta}$ implies that $\mathbf{q}^{\bs\delta}$ is a primitive $l$-th root of unity. We know that $\mathcal{M}_{\bs\theta}(\mathbf{q},\mathbf{d})$ and thus $S^{m'}\mathcal{M}_{\bs\theta}(\mathbf{q},\mathbf{d})$ are connected by Proposition \ref{Prop-Conn-nongen}. By \cite[Theorem 5.4]{KS}, the variety $\mathcal{M}_{\bs\theta}(\mathbf{q},m'\mathbf{d})$ is normal. It remains to show that taking direct sums induces a bijection of $\mathbb{C}$-points.

Surjectivity follows from \cite[Proposition 7.2]{ST}. We give some details for completeness. Let $\rho=\bigoplus_{s=1}^r\rho_s\in\mathcal{M}_{\bs\theta}(\mathbf{q},m'\mathbf{d})$ be a direct sum of $\bs\theta$-stable representations $\rho_s$. Theorem \ref{Thm-theta-st-dim} implies that the dimension vector of each $\rho_s$ lies in $\Sigma_{\mathbf{q},\bs\theta}$; that is, we have a decomposition of the dimension vector $m'\mathbf{d}=\sum_s\mathbf{d}^{(s)}$ with each $\mathbf{d}^{(s)}\in\Sigma_{\mathbf{q},\bs\theta}$. By Lemma \ref{Lem-min-admi-m'ldelta} below, this decomposition is a refinement of $\sum_{t}\mathbf{d}^{(t)}$ where each $\mathbf{d}^{(t)}=\mathbf{d}$. It follows that we can rewrite $\bigoplus_s\rho_s$ as $\bigoplus_{t=1}^{m'}\bigoplus_{s\in \Lambda_t}\rho_s$ where $\{\Lambda_t\}_t$ is a partition of the set $\{1,2,\ldots,r\}$ and the dimension vector of $\bigoplus_{s\in \Lambda_t}\rho_s$ is $\mathbf{d}$. This proves the surjectivity. In particular, $\mathcal{M}_{\bs\theta}(\mathbf{q},m'\mathbf{d})$ is connected and thus irreducible.

Let us show that this morphism between irreducible normal varieties is birational. By \cite[Proposition 2.15]{ST}, which is the $\bs\theta$-version of \cite[Lemma 7.1]{CBS}, the stratum of $\mathcal{M}_{\bs\theta}(\mathbf{q},m'\mathbf{d})$ consisting of mutually nonisomorphic $\mathbf{d}$-dimension $\bs\theta$-stable representations is open (and has dimension $2m'$). The open subset of $S^{m'}\mathcal{M}_{\bs\theta}(\mathbf{q},\mathbf{d})$ consisting of mutually nonisomorphic $\mathbf{d}$-dimension $\bs\theta$-stable representations is in bijection with this open stratum of $\mathcal{M}_{\bs\theta}(\mathbf{q},m'\mathbf{d})$; therefore, they are isomorphic by Zariski main theorem, hence birationality.

Obviously, the fibres of $\psi$ are finite. A version of Zariski main theorem applies to this situation (quasi-finite birational morphism) and implies that the surjective morphism $\psi$ is an open immersion and thus an isomorphism.
\end{proof}

Let $\mathbf{d}\in \mathbb{Z}_{\ge0}^{Q_0}$. Let $\mathbf{d}=\sum_{s=1}^{r_1}\mathbf{d}^{(s)}$ and $\mathbf{d}=\sum_{t=1}^{r_2}\mathbf{d}^{(t)}$ be two decompositions of $\mathbf{d}$. We say that the former is a refinement of the latter if there is a partition $\{\Lambda_t\}_t$ of the set $\{1,2,\ldots,r_1\}$ such that $\mathbf{d}^{(t)}=\sum_{s\in\Lambda_t}\mathbf{d}^{(s)}$ for all $t$. We say that a decomposition $\mathbf{d}=\sum_{t=1}^{r}\mathbf{d}^{(t)}$ is a $\Sigma_{\mathbf{q},\bs\theta}$-decomposition if each $\mathbf{d}^{(t)}$ lies in $\Sigma_{\mathbf{q},\bs\theta}$. We say that a $\Sigma_{\mathbf{q},\bs\theta}$-decomposition $\mathbf{d}=\sum_{t=1}^{r}\mathbf{d}^{(t)}$ is minimal if any other $\Sigma_{\mathbf{q},\bs\theta}$-decomposition $\mathbf{d}=\sum_{s=1}^{r'}\mathbf{d}^{(s)}$ is a refinement of it.
\begin{Lem}\label{Lem-min-admi-m'ldelta}
Suppose that $\bs\delta$ is the minimal positive imaginary root supported on an affine Dynkin quiver $Q$. Let $\mathbf{q}\in(\mathbb{C}^{\ast})^{Q_0}$ be such that $\mathbf{q}^{\bs\delta}$ is a primitive $l$-th root of unity, and let $\bs\theta$ be such that $\bs\theta\cdot\bs\delta=0$. Then, for any $m'\in\mathbb{Z}_{>0}$, the minimal $\Sigma_{\mathbf{q},\bs\theta}$-decomposition of $m'l\bs\delta$ is $l\bs\delta+\cdots+l\bs\delta$.
\end{Lem}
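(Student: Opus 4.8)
The plan is to first dispose of the easy inclusion, then to reorganise every competing decomposition by passing to an auxiliary sub-root-system determined by $\mathbf{q}$ and $\bs\theta$.

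First I would check that $l\bs\delta+\dots+l\bs\delta$ ($m'$ terms) really is a $\Sigma_{\mathbf{q},\bs\theta}$-decomposition. Since $\mathbf{q}^{\bs\delta}$ is a primitive $l$-th root of unity and $\bs\theta\cdot\bs\delta=0$, the smallest positive multiple of $\bs\delta$ lying in $R^+_{\mathbf{q},\bs\theta}$ is $l\bs\delta$, and $p(kl\bs\delta)=1$ for every $k\ge1$ because $\bs\delta$ is isotropic. Hence any $R^+_{\mathbf{q},\bs\theta}$-decomposition of $l\bs\delta$ into $r\ge2$ parts can only involve real roots (an imaginary summand would already be $\ge l\bs\delta$), so $\sum p=0<1$ and $l\bs\delta\in\Sigma_{\mathbf{q},\bs\theta}$; the same isotropy shows $kl\bs\delta=\underbrace{l\bs\delta+\dots+l\bs\delta}_{k}$ forces $kl\bs\delta\notin\Sigma_{\mathbf{q},\bs\theta}$ for $k\ge2$. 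In particular the only imaginary element of $\Sigma_{\mathbf{q},\bs\theta}$ that can occur below $m'l\bs\delta$ is $l\bs\delta$ itself.

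The heart of the argument is to introduce the sublattice $\Gamma:=\{\mathbf{v}\in\mathbb{Z}^{Q_0}\mid\mathbf{q}^{\mathbf{v}}=1,\ \bs\theta\cdot\mathbf{v}=0\}$ and the closed sub-root-system $\Phi:=R\cap\Gamma$, whose positive part is exactly $R^+_{\mathbf{q},\bs\theta}$. Reflections in real roots of $\Phi$ preserve $\Gamma$, so $\Phi$ is a genuine root system; since the imaginary roots of $R$ are the multiples of $\bs\delta$ and the smallest one in $\Gamma$ is $l\bs\delta$, the system $\Phi$ is of affine type with null root $\bs\delta':=l\bs\delta$, together with possible finite-type components. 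The classical inputs I would invoke here are that the indecomposable elements of $\Phi^+$ are precisely its simple roots $\{\beta_j\}$, that these are linearly independent, and that $\bs\delta'=\sum_j a'_j\beta_j$ with all marks $a'_j\ge1$ on the affine component and $a'_j=0$ on the finite components. A real root lies in $\Sigma_{\mathbf{q},\bs\theta}$ exactly when it is not a sum of two elements of $R^+_{\mathbf{q},\bs\theta}$ (otherwise its value $p=0$ could not strictly exceed $\sum p\ge0$); that is, the real elements of $\Sigma_{\mathbf{q},\bs\theta}$ are exactly the (necessarily real) simple roots $\beta_j$ of $\Phi$.

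With this dictionary the grouping is forced by linear algebra. Given any $\Sigma_{\mathbf{q},\bs\theta}$-decomposition $m'l\bs\delta=\sum_s\mathbf{e}^{(s)}$, each summand is either $\bs\delta'$ or some simple root $\beta_j$, so collecting terms gives $m'\bs\delta'=c\,\bs\delta'+\sum_j n_j\beta_j$ with $c,n_j\ge0$. Substituting $\bs\delta'=\sum_j a'_j\beta_j$ and using linear independence of the $\beta_j$ yields $n_j=(m'-c)a'_j$ on the affine component and $n_j=0$ on the finite components. Thus the real summands are exactly $(m'-c)$ disjoint copies of the multiset $\{\beta_j^{\,a'_j}\}$, each of which sums to $\bs\delta'=l\bs\delta$; together with the $c$ summands equal to $l\bs\delta$ this partitions the decomposition into $m'$ blocks each summing to $l\bs\delta$. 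Hence every $\Sigma_{\mathbf{q},\bs\theta}$-decomposition refines $l\bs\delta+\dots+l\bs\delta$, which is therefore minimal. I expect the main obstacle to be the middle step: one must verify carefully that $\Phi=R\cap\Gamma$ is a bona fide (closed) sub-root-system of affine type, with a linearly independent base and null root exactly $l\bs\delta$, rather than some ill-behaved set of roots; once that structural fact is secured, the remainder is bookkeeping.
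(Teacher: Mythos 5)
Your opening step (the only imaginary member of $\Sigma_{\mathbf{q},\bs\theta}$ below $m'l\bs\delta$ is $l\bs\delta$, and real members are exactly the indecomposable elements of $R^+_{\mathbf{q},\bs\theta}$) is correct and agrees with what the paper uses. The gap is exactly at the step you flagged as the main obstacle, and it is fatal as stated: the set $\Phi=R\cap\Gamma$ is \emph{not} in general an affine root system (plus finite components) with a linearly independent base and null root $l\bs\delta$. Concretely, take $Q$ of type $\tilde{A}_3$ with vertices $0,1,2,3$ in a cycle, $\bs\theta=0$, and $\mathbf{q}=(z^{-1},1,z,1)$ with $z$ not a root of unity, so $\mathbf{q}^{\bs\delta}=1$ and $l=1$. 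Then $R^+_{\mathbf{q},\bs\theta}$ consists of the roots $\pm e_1+m\bs\delta$, $\pm e_3+m\bs\delta$ and $m\bs\delta$, and its indecomposable elements are $e_1,\ \bs\delta-e_1,\ e_3,\ \bs\delta-e_3$. These four "simple roots" satisfy the relation $e_1+(\bs\delta-e_1)=e_3+(\bs\delta-e_3)=\bs\delta$, so they are \emph{not} linearly independent, and $l\bs\delta$ has two distinct expressions in terms of them. Here $\Phi$ is a union of two $\tilde{A}_1$-type subsystems glued along the common imaginary roots; this happens whenever the finite part $\{\alpha\in R_{\mathrm{fin}}\mid \bs\theta\cdot\alpha=0,\ \mathbf{q}^{\alpha}\in\mu_l\}$ is reducible, which the hypotheses of the lemma do not exclude (indeed such nongeneric $\mathbf{q}$ are the cases the lemma is for). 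Consequently your final "forced by linear algebra" step — substituting $\bs\delta'=\sum_j a'_j\beta_j$ and matching coefficients — collapses: the coefficients $n_j$ are not determined, and no unique system of marks $a'_j$ exists.

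The conclusion itself survives in this example (one can project to each irreducible component of the finite subsystem separately and group the summands component by component), but that is an additional argument your write-up does not contain, and it is precisely where the content of the lemma lies. The paper avoids the structure theory of $R\cap\Gamma$ altogether and instead follows Crawley-Boevey's greedy induction (\cite[Lemma 3.2]{CB02}): using that every affine root is of the form $\mathbf{d}_1+m\bs\delta$ with $\mathbf{d}_1$ finite or zero, one shows that any element of $\Sigma_{\mathbf{q},\bs\theta}\setminus\{l\bs\delta\}$ is a real root $<l\bs\delta$, and then that any partial sum of the given decomposition which is a root $<l\bs\delta$ can be enlarged by one more summand so as to remain a root $\le l\bs\delta$; iterating assembles the summands into blocks each equal to $l\bs\delta$. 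If you want to salvage your approach, you must either prove the grouping separately on each affine "sheet" of $\Phi$ (handling the shared imaginary roots), or switch to the partial-sum induction, which sidesteps the failure of linear independence.
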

\begin{proof}
This proof is parallel to \cite[Lemma 3.2]{CB02}; we only indicate the key steps. An essential fact used in the proof is that an affine Kac-Moody root system consists of the vectors $\mathbf{d}_1+m\bs\delta$, where $m\in\mathbb{Z}$ and $\mathbf{d}_1$ is a root of the corresponding finite type root system or is zero if $m\ne 0$. This allows us to show that if $\mathbf{d}\in\Sigma_{\mathbf{q},\bs\theta}\setminus\{l\bs\delta\}$, then $\mathbf{d}$ is a real root and $\mathbf{d}< l\bs\delta$ (i.e., $d_v\le l\delta_v$ for every $v\in Q_0$ and $\mathbf{d}\ne l\bs\delta$). As in the proof of \cite[Lemma 3.2]{CB02}, if $\mathbf{d}=\sum_{s=1}^r\mathbf{d}^{(s)}$ is a $\Sigma_{\mathbf{q},\bs\theta}$-decomposition, and $\Lambda\subset\{1,\ldots,r\}$ is a subset such that $\sum_{s\in\Lambda}\mathbf{d}^{(s)}<l\bs\delta$ is a root, then there is some $s'\notin\Lambda$ such that $\sum_{s\in\Lambda\sqcup\{s'\}}\mathbf{d}^{(s)}\le l\bs\delta$ and is a root. An induction then shows that we can refine $\mathbf{d}=\sum_{s=1}^r\mathbf{d}^{(s)}$ into $l\bs\delta+\cdots+l\bs\delta$.
\end{proof}
\begin{Thm}\label{Thm-dec-char}
Suppose that $\mathcal{M}_{\bs\theta}(\mathbf{q},\mathbf{d})$ is nonempty. Then,
\begin{itemize}
\item[(i)] $\mathbf{d}$ admits a minimal $\Sigma_{\mathbf{q},\bs\theta}$-decomposition $\mathbf{d}=\sum_{t=1}^r\mathbf{d}^{(t)}$.
\end{itemize}
Write $\mathbf{d}=\sum_{t\in\Lambda}m_t\mathbf{d}^{(t)}$ so that the vectors $\mathbf{d}^{(t)}$ are distinct for $t\in\Lambda\subset\{1,2,\ldots,r\}$. Then, 
\begin{itemize}
\item[(ii)] There is an isomorphism
$$
\prod_{t\in\Lambda}S^{m_t}\mathcal{M}_{\bs\theta}(\mathbf{q},\mathbf{d}^{(t)})\lisom\mathcal{M}_{\bs\theta}(\mathbf{q},\mathbf{d}).
$$
\end{itemize}
\end{Thm}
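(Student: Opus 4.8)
The plan is to establish the purely combinatorial statement (i) first and then to realise the isomorphism (ii) by the same normality–plus–Zariski‑main‑theorem mechanism already used for Theorem \ref{Thm-Sym-iso}. Recall at the outset that nonemptiness of $\mathcal{M}_{\bs\theta}(\mathbf{q},\mathbf{d})$ means $\mathbf{d}$ is the dimension vector of some $\bs\theta$-polystable representation; decomposing such a representation into $\bs\theta$-stable summands and invoking Theorem \ref{Thm-theta-st-dim}, which forces each stable summand to have dimension vector in $\Sigma_{\mathbf{q},\bs\theta}$, already exhibits one $\Sigma_{\mathbf{q},\bs\theta}$-decomposition of $\mathbf{d}$. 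For part (i) I would then argue exactly as in \cite[Lemma 3.2]{CB02}, the argument underlying Lemma \ref{Lem-min-admi-m'ldelta} and refining the decomposition of \cite[\S 6.3]{ST}. The key input is the description of the affine Kac–Moody root system recalled in the proof of Lemma \ref{Lem-min-admi-m'ldelta}: every element of $\Sigma_{\mathbf{q},\bs\theta}$ is either a real root or a multiple of a minimal imaginary root of an affine subdiagram. Starting from any $\Sigma_{\mathbf{q},\bs\theta}$-decomposition, one coarsens it by repeatedly fusing a set of parts whose sum is again a root in $\Sigma_{\mathbf{q},\bs\theta}$; the root‑theoretic estimates of \cite[Lemma 3.2]{CB02} guarantee that this procedure is confluent, so that the coarsest decomposition is independent of the choices and is refined by every $\Sigma_{\mathbf{q},\bs\theta}$-decomposition. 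The surviving multiplicities are precisely those dictated by Lemma \ref{Lem-min-admi-m'ldelta}: an isotropic summand $l\bs\delta$ cannot be fused with copies of itself, so the isotropic contribution $m'l\bs\delta$ appears as exactly $m'$ copies of $l\bs\delta$. This produces the minimal decomposition $\mathbf{d}=\sum_{t=1}^r\mathbf{d}^{(t)}$ and, after collecting equal summands, the form $\mathbf{d}=\sum_{t\in\Lambda}m_t\mathbf{d}^{(t)}$.

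For part (ii), set $X=\prod_{t\in\Lambda}S^{m_t}\mathcal{M}_{\bs\theta}(\mathbf{q},\mathbf{d}^{(t)})$ and $Y=\mathcal{M}_{\bs\theta}(\mathbf{q},\mathbf{d})$, and let $\psi\colon X\to Y$ be the direct‑sum map. Since $\bs\theta\cdot\mathbf{d}^{(t)}=0$ for each $t$, the direct sum of the corresponding polystable pieces is again $\bs\theta$-polystable, so $\psi$ is a well‑defined morphism. Each factor $\mathcal{M}_{\bs\theta}(\mathbf{q},\mathbf{d}^{(t)})$ is normal by Theorem \ref{Thm-Norm}, and because $\mathbf{d}^{(t)}\in\Sigma_{\mathbf{q},\bs\theta}$ it is connected by Proposition \ref{Prop-Conn-nongen} (in the generality of the remark following it, allowing an $l$-th primitive root with $l<m$; possibly after the admissible reflections that reduce to the case of strictly decreasing dimension vectors and preserve isomorphism type). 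As symmetric products and finite products preserve normality and connectedness, $X$ is irreducible and normal, while $Y$ is normal by Theorem \ref{Thm-Norm}.

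It then suffices to check that $\psi$ is a bijection on $\mathbb{C}$-points, after which the theorem follows from the Zariski main theorem exactly as in Theorem \ref{Thm-Sym-iso}. Surjectivity repeats that argument: given a polystable $\rho=\bigoplus_s\rho_s$ of dimension $\mathbf{d}$, the multiset of dimension vectors of its stable summands is a $\Sigma_{\mathbf{q},\bs\theta}$-decomposition, which by part (i) refines $\sum_{t\in\Lambda}m_t\mathbf{d}^{(t)}$; grouping the summands along the associated partition writes $\rho$ as a direct sum of $\bs\theta$-polystable pieces of dimensions $\mathbf{d}^{(t)}$ and hence produces a preimage in $X$. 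The fibres of $\psi$ are finite, since the stable summands of $\rho$ are determined up to isomorphism and only finitely many admissible groupings remain. Finally, on the open stratum of $Y$ where, for every $t$, the $m_t$ stable pieces of dimension $\mathbf{d}^{(t)}$ are pairwise non‑isomorphic — open and of the expected dimension $\sum_t 2m_tp(\mathbf{d}^{(t)})$ by \cite[Proposition 2.15]{ST} — the grouping is unique, so $\psi$ restricts to a bijection onto an open dense subset and is therefore birational. A birational, quasi‑finite, surjective morphism onto the normal variety $Y$ is an isomorphism, giving (ii); in particular $Y$ is connected as the image of the connected $X$.

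The main obstacle is part (i): making rigorous that the fusion procedure on $\Sigma_{\mathbf{q},\bs\theta}$-decompositions terminates at a genuinely coarsest decomposition and is independent of the fusion order. This is where the full root‑system analysis of \cite[Lemma 3.2]{CB02}, together with the isotropic bookkeeping of Lemma \ref{Lem-min-admi-m'ldelta}, is indispensable. Once (i) is established, part (ii) is a routine adaptation of the geometry already carried out for Theorem \ref{Thm-Sym-iso}, with the only new point being the compatibility of the generic decomposition stratum with the product of symmetric powers.
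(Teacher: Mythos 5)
There is a genuine gap, and it sits exactly where you flagged it: part (i). Your proposed route rests on the claim that ``every element of $\Sigma_{\mathbf{q},\bs\theta}$ is either a real root or a multiple of a minimal imaginary root of an affine subdiagram.'' This is false for a general star-shaped quiver: such quivers are typically of wild type, and $\Sigma_{\mathbf{q},\bs\theta}$ then contains anisotropic imaginary roots (indeed these are the generic case in Crawley-Boevey--Shaw's existence theorem). The affine structure theory you invoke, namely \cite[Lemma 3.2]{CB02} and Lemma \ref{Lem-min-admi-m'ldelta}, is stated and proved only for an affine Dynkin quiver with minimal imaginary root $\bs\delta$; it gives no control over fusions of parts in the wild case, so the asserted confluence of your coarsening procedure has no basis. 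The paper does not prove (i) combinatorially at all: it proves (i) and (ii) \emph{simultaneously} by induction on $|\mathbf{d}|$, using admissible reflections when $(\mathbf{d},e_v)>0$ and $(q_v,\theta_v)\neq(1,0)$, splitting off $e_v$ as a composition factor when $q_v=1$ and $\theta_v=0$, passing to connected components of the support in the fundamental region, and then invoking the Schedler--Tirelli classification \cite[Theorem 6.16]{ST} to land in the affine-type base cases (4)--(7). That reduction machinery is the substance of the proof and is entirely absent from your proposal.

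A second omission: you never confront the case ($\mathbf{Aff}^{\infty}$), $\mathbf{d}=e_{\infty}+m\bs\delta$, which the paper identifies as the actual gap in the literature (\cite[Theorem 6.17 (iii)]{ST} is stated without proof, and the additive argument of \cite[\S 5]{CB02} does not transfer, since it uses a total ordering on $\mathbb{C}$ that has no analogue on $\mathbb{C}^{\ast}$). The paper resolves it geometrically: the direct-sum map $\Psi$ with $\rho_{\infty}$ is a closed immersion, the target is irreducible by Proposition \ref{Prop-Conn-nongen} (here $\mathbf{d}$ is indivisible), and source and target both have dimension $2m$, forcing $\Psi$ to be an isomorphism --- note this \emph{derives} the decomposition of polystables rather than assuming a combinatorial form of (i) for it. Your part (ii) mechanism (direct-sum map, normality via \cite[Theorem 5.4]{KS}, bijectivity, Zariski main theorem) is indeed the same as in Theorem \ref{Thm-Sym-iso} and would assemble correctly once (i) and the base cases are in place; but as written it also leans on connectedness of $\mathcal{M}_{\bs\theta}(\mathbf{q},\mathbf{d}^{(t)})$ for arbitrary $\mathbf{d}^{(t)}\in\Sigma_{\mathbf{q},\bs\theta}$, which Proposition \ref{Prop-Conn-nongen} only gives under its stated hypotheses (the unproved Remark following it is not a substitute). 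The paper's inductive structure avoids this issue because in its base case $\mathbf{d}\in\Sigma_{\mathbf{q},\bs\theta}$ the statement (ii) is the identity map, so no connectedness of general factors is ever needed.
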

\begin{proof}
The statement is parallel to \cite{CB02}, and some arguments already appeared in the proof of \cite[Theorem 6.17]{ST}. The two parts (i) and (ii) are simultaneously proved by an induction on $|\mathbf{d}|:=\sum_{v\in Q_0}d_v$ via the following steps:
\begin{itemize}
\item[(1)] If there is a vertex $v$ with $(\mathbf{d},e_v)>0$ and either $q_v\ne 1$ or $\theta_v\ne 0$, then the reflection at $v$ reduces the problem to a $\mathbf{d}'$ with $|\mathbf{d}'|<|\mathbf{d}|$. 
\item[(2)] If there is a vertex $v$ with $(\mathbf{d},e_v)>0$, $q_v=1$ and $\theta_v=0$, then precisely the same arguments for \cite[Lemma 5.1]{CBS} show that $e_v$ must appear as a composition factor of $\mathbf{d}$, and thus the problem is reduced to $\mathbf{d}-e_v$.
\item[(3)] If $(\mathbf{d},e_v)\le0$ for every vertex $v$, we may pass to the connected components of the support quiver of $\mathbf{d}$ and assume that $\mathbf{d}$ lies in the fundamental region. By \cite[Theorem 6.16]{ST}, the problem can be further reduced to the following situations.
\item[(4)] $\mathbf{d}\in\Sigma_{\mathbf{q},\bs\theta}$, in which case the statements (i) and (ii) are trivial.
\item[(5)] $\mathbf{d}=m'l\bs\delta$ is of type $(\mathbf{Aff})$ and $\mathbf{q}^{\bs\delta}$ has order $l$. In this case, Lemma \ref{Lem-min-admi-m'ldelta} and Theorem \ref{Thm-Sym-iso} prove what we need.
\item[(6)] The support of $\mathbf{d}$ is $J\sqcup K$ and the only arrow connecting $J$ and $K$ is $a:\infty\rightarrow 0$ with $\infty\in J_0$ and $0\in K_0$; moreover, $d_0=d_{\infty}=1$ and $\mathbf{q}^{\mathbf{d}|_J}=1$. The proof of \cite[Theorem 6.17]{ST} reduces the problem to $\mathbf{d}|_J$ and $\mathbf{d}|_K$.
\item[(7)] $\mathbf{d}=e_{\infty}+m\bs\delta$ is of type $(\mathbf{Aff}^{\infty})$. We need to show that every $\bs\theta$-polystable $\mathbf{d}$-dimensional representation $\rho$ decomposes as $\rho_{\infty}\oplus\rho_1$, where $\rho_{\infty}$ is the simple representation corresponding to the simple root $e_{\infty}$. Then, the problem is reduced to step (5), thus completing the proof of the theorem.
\end{itemize}

Step (7) is part of the statement \cite[Theorem 6.17 (iii)]{ST}, but it seems that a proof is not provided there. Besides, the proof of \cite[Proposition 7.2]{ST} asserts that it follows from the arguments of \cite[\S 5]{CB02} verbatim, but this assertion does not seem to be true. Indeed, the proof in \textit{op. cit.} relies on \cite[\S 4]{CB02}, which in turn relies on a choice of total ordering on the filed $\mathbb{C}$; however, in the multiplicative setting, it is hard to imagine a meaningful total ordering on $\mathbb{C}^{\ast}$. We give a more geometric proof below.

Consider the quiver with vertex set $Q\sqcup\{\infty\}$ as in ($\mathbf{Aff}^{\infty}$), where $Q$ is of affine Dynkin type. Taking direct sums of representations induces a morphism
$$
\Psi:\mathcal{M}_{\bs\theta|_Q}(\mathbf{q}|_Q,\mathbf{d}|_Q)\times\mathcal{M}(q_{\infty},d_{\infty})\longrightarrow\mathcal{M}_{\bs\theta}(\mathbf{q},\mathbf{d}).
$$
Recall that $q_{\infty}=d_{\infty}=1$ and $\mathbf{d}|_Q=m\bs\delta$ for some $m\ge2$, and note that $\mathcal{M}(q_{\infty},d_{\infty})$ is a point. Let us show that $\Psi$ is an isomorphism, which is equivalent to our claim that every $\rho\in\mathcal{M}_{\bs\theta}(\mathbf{q},\mathbf{d})$ decomposes as $\rho_{\infty}\oplus\rho_1$. Consider this morphism at the level of representation spaces. Denote by $\mu_{\infty}$ the quasi-Hamiltonian moment map associated to the quiver of type ($\mathbf{Aff}^{\infty}$) as in (\ref{eq-quasi-Ham}) and by $\mu$ the map associated to $Q$. Then, taking the direct sum with $\rho_{\infty}$ defines a closed immersion $\mu^{-1}(\mathbf{q}|_Q)\hookrightarrow\mu_{\infty}^{-1}(\mathbf{q})$. It follows that $\Psi$ is a closed immersion. Since $\mathbf{d}$ is indivisible, Proposition \ref{Prop-Conn-nongen} shows that $\mathcal{M}_{\bs\theta}(\mathbf{q},\mathbf{d})$ is connected, and thus irreducible. However, both $\mathcal{M}_{\bs\theta}(\mathbf{q}|_Q,\mathbf{d}|_Q)$ and $\mathcal{M}_{\bs\theta}(\mathbf{q},\mathbf{d})$ have dimension $2m$; therefore, $\Psi$ is an isomorphism.
\end{proof}

\addtocontents{toc}{\protect\setcounter{tocdepth}{-1}}
\bibliographystyle{alpha}
\bibliography{BIB}
\end{document}